\newtheorem{thm}{Theorem}[section]
\newtheorem*{GRRL}{Regularity Lemma}
\newtheorem*{Count}{Counting Lemma}
\newtheorem*{sumset-compliment-repeat}{Theorem \ref{sumset-compliment}}
\newtheorem*{FL}{Freiman's Lemma}
\newtheorem*{VII}{The Isoperimetric Inequality}
\newtheorem{lemma}[thm]{Lemma}
\newtheorem{prop}[thm]{Proposition}
\theoremstyle{definition}
\newtheorem{defn}[thm]{Definition}
\newtheorem{conjecture}[thm]{Conjecture}
\renewcommand{\leq}{\leqslant}
\renewcommand{\geq}{\geqslant}
\newcommand\Hom{\operatorname{Hom}}
\def\F{\mathbb{F}}
\newcommand\dist{\operatorname{dist}}
\newcommand\vol{\operatorname{vol}}
\def\R{\mathbb{R}}
\def\P{\mathbb{P}}
\def\C{\mathbb{C}}
\def\Z{\mathbb{Z}}
\def\Pr{\mathbb{P}}
\def\Q{\mathbb{Q}}
\def\N{\mathbb{N}}
\def\x{\textbf{x}}
\def\y{\textbf{y}}
\def\eps{\varepsilon}
\def\le{\leqslant}
\def\ge{\geqslant}
\def\plus{\,\hat{+}\,}
\def\dt{\,\mathrm{d} \theta}
\numberwithin{equation}{section}
\begin{document}

\title[Counting sets with small sumset]{Counting sets with small sumset and applications}


\author{Ben Green}
\address{Mathematical Institute, Andrew Wiles Building, Radcliffe Observatory Quarter, Woodstock Rd, Oxford OX2 6GG}
\email{ben.green@maths.ox.ac.uk}

\author{Robert Morris}
\address{IMPA, Estrada Dona Castorina 110, Jardim Bot\^anico, Rio de Janeiro, RJ, Brasil} 
\email{rob@impa.br}

\thanks{The authors would like to thank Louigi Addario-Berry and Luc Devroye for organising a workshop in Combinatorics and Probability at McGill's Bellairs Research Institute, Barbados. Discussions at that workshop led to the work in this paper. The first author is supported by a ERC starting grant 274938  ``Approximate algebraic structure and applications''.}

\onehalfspace
\subjclass[2000]{Primary }

\begin{abstract}
We study the number of $k$-element sets $A \subset \{1,\ldots,N\}$ with $|A + A| \leq K|A|$ for some (fixed) $K > 0$. Improving results of the first author and of Alon, Balogh, Samotij and the second author, we determine this number up to a factor of $2^{o(k)} N^{o(1)}$ for most $N$ and $k$. As a consequence of this and a further new result concerning the number of sets $A \subset \Z/N\Z$ with $|A +A| \leq c |A|^2$, we deduce that the random Cayley graph on $\Z/N\Z$ with edge density~$\frac{1}{2}$ has no clique or independent set of size greater than $\big( 2 + o(1) \big) \log_2 N$, asymptotically the same as for the Erd\H{o}s-R\'enyi random graph. This improves a result of the first author from 2003 in which a bound of $160 \log_2 N$ was obtained. As a second application, we show that if the elements of $A \subset \N$ are chosen at random, each with probability $1/2$, then the probability that $A+A$ misses exactly $k$ elements of $\N$ is equal to $\big( 2 + o(1) \big)^{-k/2}$ as $k \to \infty$. 
\end{abstract}

\maketitle

\tableofcontents

\section{Introduction}\label{sec1-intro}

One of the fundamental results in additive combinatorics is the theorem of Freiman~\cite{Frei59}, which states that every finite set of integers with \emph{bounded doubling} (that is, with $|A + A| \le K|A|$ for some fixed $K$) is contained in a generalized arithmetic progression of bounded dimension and size $\ll |A|$. Despite the importance of this theorem, very little attention has been paid to the closely-related question of the \emph{typical} structure of such a set. Motivated by the applications of this problem in~\cite{ABMS} and~\cite{G05}, we determine the number of $k$-subsets of $\{1,\ldots,N\}$ with $|A+A| \le K |A|$ up to a factor of $2^{o(k)} N^{o(1)}$ for most values of ~$N$ and~$k$. This result significantly improves bounds obtained in~\cite{ABMS,G05}, and confirms (a special case of) a conjecture of Alon, Balogh, Samotij and the second author~\cite{ABMS}. As an application, we improve a result of the first author~\cite{G05}, by showing that a random Cayley graph is essentially as good a Ramsey graph as the Erd\H{o}s-R\'enyi random graph $G(n,\frac{1}{2})$. 

Given an abelian group $\Gamma$ and a set $A \subset \Gamma$, the sumset $A+A$ and restricted sumset $A \plus A$ of $A$ are defined as follows:
$$A + A \, := \, \big\{ a + b : a,b \in A \big\} \qquad \textup{and} \qquad A \plus A \, := \, \big\{ a + b : a,b \in A, \, a \neq b \big\}.$$
Freiman's theorem (and subsequent quantitative improvements of it) gives, in some sense, a complete description of sets $A \subset \Z$ with $|A+A| \le K|A|$. Any such set is a subset of a generalised progression
\[ P = \{\ell_1 x_1 + \dots + \ell_d x_d : 0 \leq \ell_i < L_i\}\] with $d \leq C_1(K)$ and $L_1 \cdots L_d \leq C_2(K) |A|$. The best known bounds for $C_1(K)$ and $C_2(K)$ are of shape $C_1(K) \sim K^{O(1)}$ and $C_2 \sim \exp(K^{O(1)})$, and furthermore simple examples show that these bounds cannot be improved other than by refining the $O(1)$ terms. See \cite{sanders-survey} for a comprehensive discussion. ``Typically'', a set $A$ of the form just described will have doubling constant something like $2^{C_1(K)} C_2(K)$, that is to say exponential in $K$. That is, a fair amount of information is lost in applying Freiman's theorem. 

To accurately count sets with doubling at most $K$, then, one must go beyond Freiman's theorem. In this paper, we succeed in doing this in certain ranges. Our main result is the following. 

\begin{thm}\label{thm:counting}
Fix $\delta > 0$ and $K > 0$. Then the following hold for all integers $k \geq k_0(\delta, K)$.
\begin{enumerate}
\item[$(i)$] For any $N \in \N$ there are at most
\[ 2^{\delta k} \binom{\frac{1}{2}Kk}{k} N^{\lfloor K + \delta \rfloor}\]
sets $A \subset [N]$ with $|A| = k$ and $|A + A| \leq K|A|$. 
\item[$(ii)$] If $N$ is prime there are at most 
\[ 2^{\delta k} \binom{\frac{1}{2}Kk}{k} N^{\lfloor K + \delta \rfloor}\]
sets $A \subset \Z/N\Z$ with $|A| = k$ and $|A + A| \leq K|A|$, provided that $Kk \leq (1 - \delta)N$.
\end{enumerate}
Corresponding bounds hold in either case if the sumset is replaced by the restricted sumset.\end{thm}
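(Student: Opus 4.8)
The plan is to deduce Theorem~\ref{thm:counting} from a sharp container statement: every $A$ as in the theorem lies inside a ``structured'' set $C$ with $|C|\le(\tfrac12 K+\delta)|A|$ that can be pinned down using at most $\lfloor K+\delta\rfloor$ free parameters from $[N]$ (resp.\ $\Z/N\Z$). Concretely, $C$ will be a bounded union of translates of generalised progressions whose generators are drawn from a common set $G$, with $|G|$ plus the number of translates at most $\lfloor K+\delta\rfloor$; the remaining data --- which generators each piece uses, and the side-lengths, each of size $O_K(|A|)$ --- then costs only a factor $k^{O_K(1)}=2^{o(k)}$. Granting this, the theorem follows by a union bound: there are at most $2^{o(k)}N^{\lfloor K+\delta\rfloor}$ admissible $C$ (first choose the $\le\lfloor K+\delta\rfloor$ parameters from $[N]$, then the bounded auxiliary data), and each such $C$ has $\binom{|C|}{k}\le\binom{(\frac12 K+\delta)k}{k}$ subsets of size $k$; a routine estimate gives $\binom{(\frac12 K+\delta)k}{k}\le 2^{g(\delta)k}\binom{\frac12 Kk}{k}$ with $g(\delta)\to0$ as $\delta\to0$, so multiplying and choosing the $\delta$ of the structural statement small in terms of the $\delta$ of the theorem yields the bound $2^{\delta k}\binom{\frac12 Kk}{k}N^{\lfloor K+\delta\rfloor}$.

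The bound ``$\le\lfloor K+\delta\rfloor$ parameters'', rather than the $K^{O(1)}$ that Freiman's theorem supplies, is where Freiman's Lemma and Ruzsa's Model Lemma enter. Since a Freiman $2$-isomorphism preserves $|A+A|$, I would first replace $A$ by a model $A'\subset\Z^d$ spanning $\R^d$ affinely, with $d$ the Freiman dimension of $A$; Ruzsa's Model Lemma produces such a model (after embedding $A$ into $\Z/p\Z$ for a suitable prime $p$, which is why part~(ii) assumes $N$ prime while part~(i) does not) and allows the ambient structure to be pulled back through the isomorphism, as in the proof of Freiman's theorem. Freiman's Lemma then gives $Kk\ge|A+A|=|A'+A'|\ge(d+1)k-\binom{d+1}{2}$; the first inequality already forces $d=O_K(1)$, so $\binom{d+1}{2}/k\le\delta$ once $k\ge k_0(\delta,K)$, giving $d+1\le K+\delta$ and hence $d+1\le\lfloor K+\delta\rfloor$ since $d+1\in\N$. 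When the model does not reduce to a single progression one applies this to its pieces and bounds $|G|$ plus their number in the same way.

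The \emph{genuinely} hard step --- and where the improvement over Freiman's theorem is actually won --- is the size bound $|C|\le(\tfrac12 K+\delta)|A|$ with the sharp constant $\tfrac12$; this is the purpose of the Regularity Lemma, the Counting Lemma and the Isoperimetric Inequality. The idea is to work inside the minimal bounding progression of the model, use the arithmetic Regularity Lemma to decompose $A$ relative to a partition of that progression into cells, and apply the Counting Lemma on the regular parts: if $A$ is spread out across too large a structure it contains close to the expected --- hence far too many --- additive configurations, forcing $|A+A|>Kk$; the Isoperimetric Inequality is what makes ``spread out $\Rightarrow$ large sumset'' quantitative, with a genuine sub-box as the extremal configuration, and it is this extremal case that fixes the constant at exactly $\tfrac12$. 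Otherwise $A$ concentrates into a substantially smaller sub-structure, and one iterates a bounded number of times. I expect the main difficulties to be: isolating the correct notion of ``structured set'' (it must be more flexible than a single generalised progression --- for instance $\{0\}\cup\{M,M+1,\dots,M+k\}$ has bounded doubling but its smallest bounding progression has size $\approx 2k>\tfrac12 Kk$, so $C$ must be allowed to consist of pieces of very different sizes); controlling all error terms so that they are simultaneously $o(k)$ and $N^{o(1)}$ across the regularity iteration; and forcing out the constant $\tfrac12 K+\delta$ in place of $K+\delta$.

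Finally, part~(ii) needs the hypothesis $Kk\le(1-\delta)N$ precisely so that $A$ occupies a small enough fraction of $\Z/N\Z$ for wrap-around not to shrink $A+A$ artificially (without it, $A=\Z/N\Z$ would defeat the statement); under it the argument carries over using the cyclic form of the Isoperimetric Inequality, for which $N$ prime is the natural setting. The restricted-sumset statements follow by rerunning everything with the restricted-sumset forms of Freiman's Lemma and the Isoperimetric Inequality; these have the \emph{same} leading constants as their unrestricted counterparts --- the discrepancy between $A+A$ and $A\plus A$ is confined to $O_d(1)$ boundary terms --- so the dimension and size bounds, and hence the count, are unchanged.
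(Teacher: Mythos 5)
Your high-level accounting is right --- the bound factors as (a ``half-sized'' structure contributing $\binom{Kk/2}{k}$) times ($N^{\lfloor K+\delta\rfloor}$ coming from Freiman's Lemma via the Freiman dimension, with $\binom{r+1}{2}/k\le\delta$ once $k$ is large) --- but the architecture you propose for combining them has a real gap. You assert a container statement \emph{in $[N]$ itself}: every $A$ lies in a set $C$ of size $(\frac12K+\delta)k$ that is a union of generalised progressions pinned down by $\lfloor K+\delta\rfloor$ elements of $[N]$. Nothing in your sketch produces such a $C$, and it is not clear it exists in that form: the regularity/counting machinery operates in a dense model $\Z/p\Z$ with $p=O_K(k)$, where it shows that a \emph{dilate} of the model of $A$ sits (up to $o(k)$ stray points) inside a union of intervals of total length $\le(\frac12+\delta)|A+A|$. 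A Freiman isomorphism does not transport this interval structure back to a comparably parameterised subset of $[N]$. The paper avoids the issue by never forming a container in $[N]$: it counts Freiman isomorphism classes inside the model (getting $2^{o(k)}\binom{Kk/2}{k}$ classes) and then, separately, counts Freiman homomorphisms of a class representative into $[N]$, which is at most $N^{r+1}$ because such a homomorphism is determined by its values on $r+1$ points. That linear-algebraic step is what actually delivers the $N^{\lfloor K+\delta\rfloor}$; your ``few generators drawn from $[N]$'' heuristic is a stand-in for it that would need its own proof.

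Two further misattributions would derail an attempt to execute your plan. First, the constant $\frac12$ does not come from the isoperimetric inequality: it comes from Pollard's stability form of Cauchy--Davenport applied to the set $S$ of cells $I_i(q)$ on which the dilated model is dense --- if $2|S|\ge q$ one contradicts $|A+A|\le(1-\delta)p$, and otherwise $|A+A|\ge 2(1-\delta)|S|\cdot p/q$ forces $|S|\cdot p/q\le(\frac12+\delta)|A+A|$. The isoperimetric inequality on $\Z^d$ is used only in the regime $|A+A|=\Omega(k^2)$ needed for the clique-number application, not for Theorem~\ref{thm:counting}. Second, part (ii) is not handled by a ``cyclic isoperimetric inequality'': it is reduced to part (i) by showing (via Freiman's theorem with Chang's bounds) that many dilates $\lambda A$ land in an interval of length $N/4$, and (via a sum--product estimate) that only $O(K^{100})$ affine maps fix $A$, so each $A\subset\Z/N\Z$ is hit by $\gg N^2/k$ of the fewer than $N^2$ affine pullbacks of sets counted in part (i); primality is needed both for the dense-model construction and to rule out the index-two-subgroup example. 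Your restricted-sumset reduction is also more laborious than necessary: Schoen's theorem gives $|A\plus A|=(1+o(1))|A+A|$ directly, so one simply replaces $K$ by $K+o(1)$.
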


Two key examples show that both of the main terms $\binom{\frac{1}{2}Kk}{k}$ and $N^{K}$ in (i) are more-or-less necessary.

\emph{Example 1.} If $P$ is an arithmetic progression of length $Kk/2$ then any set $A \subset P$ of size $k$ will have $|A + A| \leq |P + P| < Kk$, and thus will have doubling less than $K$. For any fixed progression $P$ there are $\binom{\frac{1}{2}Kk}{k}$ such sets.

\emph{Example 2.} Let $A$ consist of an arithmetic progression $P$ of length $k - K + 2$ and $x_1,\dots, x_{K-2}$ are distinct elements, disjoint from $P$. There are almost $N^K$ such sets ($\sim N^2$ choices for $P$ and then $\approx N$ choices for each $x_i$). Furthermore 
\[ A +A = (P + P) \cup \bigcup_{i = 1}^{K-2} (x_i + P) \cup \bigcup_{i,j} \{x_i + x_j\},\] and so 
\[ |A + A| \leq 2|P| + (K-2)|P| + \textstyle\frac{1}{2}\displaystyle(K-2)(K-1) < K(|P| + K-2) = K|A|,\]
and hence $A$ has doubling less than $K$.

In the light of these examples, we see that Theorem~\ref{thm:counting} (i) is sharp when $k/\log N \rightarrow \infty$ (in which case the contribution from Example 1 dominates) and when $k/\log N \rightarrow 0$ (in which case Example 2 dominates). In the intermediate range $k \sim \log N$ our result is not sharp. If $k = \alpha \log N$ then presumably there are $N^{f(\alpha,K) + o(1)}$ sets $A \subset [N]$ with $|A| = k$ and $|A + A| \leq K|A|$ for some function $f(\alpha, K)$, but our results do not give this. We leave the exploration of this range as an open question. 

Note also that some condition such as $Kk \leq (1 - \delta)N$ is necessary in part (ii) of Theorem~\ref{thm:counting}. 
Indeed, if $k = \frac{N}{K}$ then, since we always have $A+A \subset \Z/N\Z$, the bound in (ii) would have to be modified to the much larger quantity $\binom{N}{k} = \binom{Kk}{k}$. It is also critical in this part that $N$ be prime. For example, if $N$ is even then $\Z/N\Z$ will have about $\binom{2k}{k}$ subsets $A$ with $|A| = k = \frac{1}{4}N$ and $|A+ A| \leq 2k$, namely all $k$-element subsets of the index 2 subgroup of $\Z/N\Z$.

We will apply Theorem~\ref{thm:counting} to the study of \emph{random Cayley graphs}. Let $\Gamma$ be a finite abelian group of size $N$, and select a set $A \subset \Gamma$ at random by choosing each $x \in \Gamma$ to lie in $A$ independently and at random with probability $\frac{1}{2}$. The \emph{Cayley sum graph} $G_A$ on vertex set $\Gamma$ is obtained by joining $x$ to $y$ if and only if $x + y \in A$. Such graphs are commonly considered as possible examples of highly-pseudorandom graphs. A frequently-considered example is the \emph{Paley sum graph} in which $N$ is prime and $A$ consists of the quadratic residues modulo $N$. To say that a graph is pseudorandom implies that it shares characteristics with the truly random Erd\H{o}s--R\'enyi graph $G(n,\frac{1}{2})$. One statistic one might look at is the \emph{clique number}, which is $(2 + o(1))\log_2 N$ for the Erd\H{o}s--R\'enyi graph. It is suspected that the Paley sum graph has clique number $O(\log^{1 + o(1)} N)$, but no bound better than $O(\sqrt{N})$ has been proven or seems likely to be at any time soon. (Note, however, that by a result of Graham and Ringrose \cite{graham-ringrose} the clique number of the Paley sum graph is not always $O(\log N)$; see \cite{G05} for further remarks on this point.)

If one is content with existence proofs, rather than explicit constructions such as the Paley sum graph, it is possible to do much better.  Our second result improves a theorem of the first author~\cite{G05}, and shows that there \emph{exist} Cayley sum graphs whose clique size essentially matches that of the Erd\H{o}s--R\'enyi graph.

\begin{thm}\label{maincliquetheorem}
For every $\eps > 0$, the following holds for all sufficiently large primes $N$. If $A \subset \Z/N\Z$ is chosen uniformly at random then, with probability $1 - o(1)$, the Cayley graph $G_A$ has no clique of size greater than $(2 + \eps) \log_2 N$.
\end{thm}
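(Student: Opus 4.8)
\emph{Setup and first moment.} The plan is to estimate the expected number of cliques of a suitable size in $G_A$ and apply Markov's inequality. Set $k:=\lceil(2+\eps)\log_2 N\rceil$; since any clique of size larger than $(2+\eps)\log_2 N$ contains a clique of size exactly $k$, it suffices to show that $\E\big[\#\{\text{cliques of size }k\}\big]=o(1)$. A $k$-element set $S\subseteq\Z/N\Z$ is a clique of $G_A$ precisely when $s+s'\in A$ for every pair $s\neq s'$, i.e. when $S\plus S\subseteq A$, which (as $A$ is a uniformly random subset) has probability $2^{-|S\plus S|}$. Hence
\[
\E\big[\#\{\text{cliques of size }k\text{ in }G_A\}\big]\;=\;\sum_{|S|=k}2^{-|S\plus S|}\;=\;\sum_{m=2k-3}^{\binom{k}{2}}N_k(m)\,2^{-m},
\]
where $N_k(m)$ is the number of $k$-element $S\subseteq\Z/N\Z$ with $|S\plus S|=m$. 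Since $k=O(\log N)$ there are only $N^{o(1)}$ values of $m$, so it is enough to bound each summand by $N^{-\Omega(1)}$. The point is that $N_k(m)$ is governed by counting results for sets with small restricted sumset, and the three natural ranges of $m$ require different inputs.

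\emph{Bounded doubling ($m\le K_0k$, with $K_0=K_0(\eps)$ a large constant).} Here Theorem~\ref{thm:counting}(ii), in its restricted-sumset form, applies with $K=m/k$ (legitimately, since $m\le K_0k\ll(1-\delta)N$ and $k$ may be taken large in terms of $\delta$ and $K_0$), giving $N_k(m)\le 2^{\delta k}\binom{Kk/2}{k}N^{\lfloor K+\delta\rfloor}$. The decisive cancellation comes from $k=(2+\eps)\log_2 N$, which makes $2^{-m}=2^{-Kk}=N^{-(2+\eps)K}$, together with the crude estimate $\binom{Kk/2}{k}\le 2^{Kk/2}=N^{(2+\eps)K/2}$; collecting exponents,
\[
N_k(m)\,2^{-m}\;\le\;N^{\,\delta(3+\eps)\,-\,\eps K/2}\;\le\;N^{-\eps/2}\qquad(K\ge 2),
\]
once $\delta=\delta(\eps)$ is small. (The $O(1)$ values $m<2k$, for which $S$ lies within $O(1)$ elements of an arithmetic progression and there are only $N^{O(1)}$ choices, are disposed of directly.) Summing over the $O(\log N)$ values of $m$ in this range gives $o(1)$.

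\emph{Moderate doubling ($K_0k<m\le(\tfrac12-\eps'')k^2$, with $\eps''=\eps''(\eps)>0$ small).} This is exactly the regime in which the doubling $K=m/k$ is unbounded (growing like $\log_2 N$), so Theorem~\ref{thm:counting} no longer applies; here I would invoke the paper's further counting result, on the number of $A\subseteq\Z/N\Z$ with $|A|=k$ and $|A+A|\le c|A|^2$, taken with $c=\tfrac12-\eps''$. What is needed is that $\sum_m N_k(m)2^{-m}$ over this range is $o(1)$, i.e. roughly that $N_k(m)\le(2-\eta)^m$ for some $\eta=\eta(\eps)>0$. \emph{I expect this to be the main obstacle.} Theorem~\ref{thm:counting} is of no help, since its $2^{\delta k}$ and $N^{O(1)}$ slack is under control only for fixed $K$; and a crude Freiman-type covering only gives $N_k(m)\le N^{\poly(K)}\exp(k\cdot\poly(K))$, which far exceeds $2^{Kk}$ once $K$ is large (the polynomial in $K$ here being superlinear), so the gain $2^{-Kk}$ is lost. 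It is precisely this range that limited the first author's earlier argument to $160\log_2 N$, and the new moderate-doubling count is what brings the constant down to $2+\eps$.

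\emph{Large doubling ($m>(\tfrac12-\eps'')k^2$; this includes the near-Sidon sets, which are typical).} Now the trivial bound $N_k(m)\le\binom Nk\le N^{k}=2^{k^2/(2+\eps)}$ already suffices: since $2^{-m}<2^{-(\frac12-\eps'')k^2}$ and $\eps''$ is chosen so that $\tfrac12-\eps''>\tfrac1{2+\eps}$, we get $N_k(m)\,2^{-m}\le 2^{-\Omega_\eps(k^2)}=N^{-\omega(1)}$. Combining the three ranges yields $\E[\#\{\text{cliques of size }k\}]=o(1)$, and Markov's inequality completes the proof.
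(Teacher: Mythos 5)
Your overall strategy (first moment over cliques of size $k=\lceil(2+\eps)\log_2N\rceil$, split according to $m=|S\plus S|$) is exactly the paper's, and your treatment of the small-doubling range via Theorem~\ref{thm:counting} and of the very top range $m>(\frac12-\eps'')k^2$ via the trivial bound $\binom{N}{k}$ is sound. But the range you flag as ``the main obstacle'' is a genuine gap, and it splits into two pieces of very different status. For $\delta k^2\le m\le(\frac12-\eps'')k^2$ the result you want to invoke does exist and does the job: Proposition~\ref{prop:count:toprange} gives $N_k(m)\le N^{2m/k+o(k)}$ whenever $m\ge \eps''' k^2$, and since $N^{2m/k}=2^{2m\log_2N/k}\le2^{2m/(2+\eps)}=2^{(1-\eps/(2+\eps))m}$ this beats $2^{-m}$ by a power of $N$. (You must apply it with the actual value of $m$, not with $m=(\frac12-\eps'')k^2$: counting all sets with $|S+S|\le ck^2$ and then multiplying by $2^{-m}$ for a much smaller $m$ loses everything.) That piece of your gap is fillable by what you cite, and it makes your trivial-bound endgame redundant.

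The piece that is not fillable by anything you cite is $K_0k<m<\delta k^2$. Here Theorem~\ref{thm:counting} fails, as you say, and Proposition~\ref{prop:count:toprange} does not apply since $m=o(k^2)$. The paper closes this range with Proposition~\ref{prop:middle:ranges} (essentially Proposition~23 of \cite{G05}): one counts Freiman isomorphism classes crudely --- at most $k^{4k}$ in general, or $(2em/k)^k\exp(k^{31/32})$ when $m\le k^{31/30}/2$ --- and multiplies by the number $N^{r}$ of Freiman embeddings into $\Z/N\Z$, where the Freiman dimension satisfies $r\le 2m/k+\frac1k\binom{r}{2}$ and hence $r\le(2+o(1))m/k$ throughout this range. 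The point you miss is that no Freiman-theorem covering, with its $\exp(\poly(K))$ losses, is needed: the embedding count $N^{(2+o(1))m/k}\le 2^{(1-\eps^2)m}$ already supplies the power saving, and the isomorphism-class counts above are $2^{o(m)}$ for all $k/\delta\le m\le\delta k^2$ (which is exactly why the paper subdivides further at $m=k^{1+\delta}$, where $k^{4k}$ stops being negligible and the finer bound $(2em/k)^k$ takes over). Your historical remark is also off: this intermediate range was already handled with the correct constant in \cite{G05}; what forced the constant $160$ there, and what the new results of this paper repair, are precisely the two ranges you do handle, $m=O(k)$ and $m=\Omega(k^2)$.
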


In \cite[Theorem 7]{G05} a similar result was obtained, but with $160$ replacing $2 + \eps$. Our argument builds on the argument there, but requires Theorem~\ref{thm:counting} as well as some other innovations of a rather different nature (see Section~\ref{sec5-dimensionbound}). Since Cayley graphs are regular, our argument shows that there are regular graphs attaining (essentially) the Erd\H{o}s--R\'enyi bound. Unsurprisingly this is a known result: see~\cite{KSVW}. 

A second application of Theorem~\ref{thm:counting} is the following result, which seems quite natural to us but does not appear to have been established before. Here $\N = \{1,2,3,\dots\}$. 

\begin{thm}\label{sumset-compliment}
Let $A \subset \N$ be chosen by selecting each positive integer to lie in $A$ with probability $\frac{1}{2}$, these choices being made independently. Then the probability that $A + A$ omits exactly $s$ elements of $\N$ is equal to $(2 + o(1))^{-s/2}$ as $s \rightarrow \infty$. 
\end{thm}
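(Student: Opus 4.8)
The plan is to translate the probabilistic statement into a counting problem and then feed it into Theorem~\ref{thm:counting}. Observe that the event ``$A+A$ omits exactly $s$ elements of $\N$'' forces $A+A$ to contain all sufficiently large integers, and in particular $A$ itself must be eventually co-sparse: writing $T$ for the (finite) set of integers \emph{not} in $A+A$, we have $\max(T) < \infty$, and for $n > \max(T)$ every $n$ lies in $A+A$. A short argument (essentially the fact that if $A$ misses a long block of integers then $A+A$ misses roughly twice as long a block) shows that $A \cap [1,M]$ must have density tending to $1$; more precisely, the complement $\overline{A} = \N \setminus A$ satisfies $|\overline{A} \cap [1,M]| = O_s(1)$ once $M$ is large, so it is natural to condition on the finite set $B := \overline{A}$, which is the real object of interest. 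I would therefore reparametrise: the probability in question is $\sum_{B} 2^{-\infty}\cdots$ — more carefully, for each candidate finite set $B \subset \N$ one computes the probability that $\overline{A}$ equals $B$ (this is a limiting/tail probability, handled by a straightforward Borel--Cantelli / independence computation) times the indicator that $\N \setminus (B+B\text{-type obstructions})$ has exactly $s$ missing elements. The upshot is that the leading-order contribution comes from sets $B$ for which $\N \setminus (A+A)$ has size exactly $s$, and one must count, for each $s$, the number of ``profiles'' $B$ that produce exactly $s$ omitted sums.

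The key combinatorial input is that if $A+A$ omits exactly $s$ elements then, after removing a bounded initial segment, $A$ is a set of integers with $|A \cap [1,M]| = M - O_s(1)$, equivalently $\overline A \cap [1,M]$ has bounded size, and the omitted set $T \subset (\overline A + \overline A)$-neighbourhood has size $s$. The crucial quantitative claim — which is where Theorem~\ref{thm:counting} enters, applied in the regime where Example~2 dominates — is an \emph{upper} bound: the number of ways to choose the finitely many ``holes'' of $A$ so that exactly $s$ sums are missed is at most $(2+o(1))^{s/2}$ worth of configurations once one accounts for the $2^{-(\text{number of integers forced to lie outside }A)}$ weight. Concretely, if $\overline A$ (intersected with a large window) has $t$ elements, the probability cost is $2^{-t}$, while the number of such windows/configurations with exactly $s$ missed sums is roughly $\binom{\text{something}}{t}$ times a polynomial; optimising over $t$, and using that each hole in $A$ can create at most $\sim 2t$ (and generically exactly $2t$) holes in $A+A$ near it, gives the balance $s \approx 2t$, hence weight $2^{-s/2}$, with the polynomial and entropy factors absorbed into the $o(1)$. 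This is precisely the shape of Example~2 read backwards, and Theorem~\ref{thm:counting}(i) supplies the matching upper bound that no other mechanism contributes more than $2^{o(s)}$ extra configurations.

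For the lower bound I would exhibit an explicit family: take $A$ to be all of $\N$ except for a set $\overline A$ of $t = \lceil s/2 \rceil$ well-separated large integers chosen so that the $2t$ sums $\overline A + \N$ that could be missed are indeed all missed and no sum is missed twice — e.g. $\overline A = \{x_1 < \dots < x_t\}$ with $x_{i+1} > 3x_i$, together with a small modification (removing $0$ or $1$ elements, or splitting one hole) to adjust the parity of $s$. For such $A$ one checks directly that $\N \setminus (A+A)$ has exactly $s$ elements, namely $\{2x_i : i\} \cup \{x_i + m : m \notin A, \text{small}\}$-type sets, and the probability that $\overline A$ is exactly this prescribed $t$-element set (with all other integers in $A$) is $2^{-t} = 2^{-s/2 + O(1)}$; summing over the $\gg (cM)^t$ choices of the $x_i$ in a window $[1,M]$ and then letting $M \to \infty$ — here one must be slightly careful that these events are essentially disjoint and that the infinitely many ``other integers in $A$'' contribute a convergent product — yields a total probability $\geq (2+o(1))^{-s/2}$.

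The main obstacle I anticipate is the \emph{upper} bound, and specifically making rigorous the reduction from the tail event ``$A+A$ omits exactly $s$ elements'' to a clean finite counting statement to which Theorem~\ref{thm:counting} applies: one has to show that the contribution of sets $A$ whose complement $\overline A$ is \emph{not} confined to a bounded window, or which have large doubling on the relevant scale, is negligible. This requires an a priori structural lemma — something like: if $A+A$ misses only $s$ integers then $A$, truncated to $[1,M]$ for appropriate $M = M(s)$, has at most $Cs$ elements in its complement and doubling $\frac{|A+A|}{|A|} \leq 1 + o(1)$ on that window — after which Theorem~\ref{thm:counting}(i) (with $K$ slightly above $1$, so $\lfloor K+\delta\rfloor = 1$ and $\binom{Kk/2}{k} = 2^{o(k)}$) bounds the number of such truncations, and a Borel--Cantelli tail estimate converts the count into the claimed probability bound $(2+o(1))^{-s/2}$. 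Matching the constant $2$ exactly (rather than some $C > 2$) is the delicate point, and is exactly what the sharp form of Theorem~\ref{thm:counting} is designed to deliver.
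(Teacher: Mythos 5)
Your reduction rests on a false structural picture, and this breaks both directions of the argument. You claim the event forces $A \cap [1,M]$ to have density tending to $1$, so that the object to count is a bounded configuration of ``holes'' $\overline A$. But for $n$ to be omitted from $A+A$, the set $A$ must miss at least one element of each of the $\lfloor n/2\rfloor$ disjoint pairs $\{i,n-i\}$, so $|\overline A \cap [1,n]| \ge \lfloor n/2\rfloor$: a set whose complement in a window is $O_s(1)$ omits essentially nothing, and conversely a set omitting $s$ elements has a complement occupying at least half of an initial segment. The correct reduction is the opposite of yours: with probability $1 - O(2^{-s})$ every omitted element lies in $[1,10s]$, so the event is determined by $X = A \cap [10s]$, which is a \emph{uniform} random subset of $[10s]$ (density $\frac12$, tilted only slightly by the conditioning). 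Omitting exactly $s$ elements of $[10s]$ forces $|X+X| \le 19s$, and Theorem~\ref{thm:counting}(i) with $N = 10s$, summed over all $k$, bounds the number of such $X$ by $2^{19s/2+o(s)}$; dividing by $2^{10s}$ gives $2^{-s/2+o(s)}$. Note that the dominant contribution is the binomial factor $\binom{19s/2}{k}$ with $k \asymp s \gg \log N$ --- the Example~1 regime --- whereas you invoke the Example~2 regime and propose applying the theorem with doubling $K = 1+o(1)$ to a near-full set, which is not where the probability mass lies.

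The lower bound is also broken. For your family $A = \N\setminus\{x_1,\dots,x_t\}$, the probability that $\overline A \cap [1,M]$ equals the prescribed $t$-set is $2^{-M}$, not $2^{-t}$: you must pay for every integer forced \emph{into} $A$ as well as the $t$ forced out. Summing over the $\asymp M^t$ placements gives $M^t 2^{-M} \to 0$. Moreover such sets do not even have the right sumset: for $x \ge 2$ one has $2x = 1 + (2x-1) \in A+A$, so isolated holes in $A$ create no holes in $A+A$, and your heuristic $s \approx 2t$ has no basis. The correct lower bound is a one-line induction: conditioning on $1 \notin A$, the set $B := A - 1$ has the same distribution as $A$ and $|\N\setminus(A+A)| = |\N\setminus(B+B)| + 2$, whence $a(s) \ge \frac12 a(s-2)$ and so $a(s) \ge (2+o(1))^{-s/2}$.
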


A similar result could be stated in which elements of $A$ are chosen with probability $p$, but we do not do so here. \vspace{11pt}

\textsc{Prior work.}  Let us conclude this introduction by stating more precisely the results of~\cite{ABMS,G05}, mentioned earlier, which motivated the research described in this paper. The following conjecture was made in~\cite{ABMS}.

\begin{conjecture}[Alon, Balogh, Morris and Samotij]\label{S+Sconj}
For every $\delta > 0$, there exists $C > 0$ such that the following holds. If $k \geq C \log N$ and if $K \leq k/C$, then there are at most
$$2^{\delta k} \binom{\frac{1}{2} Kk}{k}$$
sets $A \subset [N]$ with $|A| = k$ and $|A + A| \le K|A|$. 
\end{conjecture}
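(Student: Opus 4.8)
\smallskip
\noindent\emph{Proof strategy.} The plan is to deduce the conjecture, in the regime where the doubling parameter $K$ is a fixed constant, directly from Theorem~\ref{thm:counting}. This is the ``special case'' mentioned in the abstract; the full conjecture, in which $K$ may grow with $k$ up to $k/C$, lies beyond the reach of our methods, as I explain at the end. The point is that under the hypothesis $k \geq C\log_2 N$ one has $N \leq 2^{k/C}$, so the polynomial factor $N^{\lfloor K + \delta\rfloor}$ appearing in Theorem~\ref{thm:counting}\,$(i)$ is itself only sub-exponential in $k$ provided $C$ is large compared with $K$ and $\delta$, and can therefore be absorbed into the term $2^{\delta k}$. (Since $C$ is at our disposal the base of the logarithm is immaterial, so I fix it to be $2$.)

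In detail, fix $\delta > 0$ and a constant $K \geq 2$; there is nothing to prove when $K < 2$, as any $k$-element set of integers has $|A + A| \geq 2k - 1 > Kk$ once $k$ is large. First I would invoke Theorem~\ref{thm:counting}\,$(i)$ with $\delta$ replaced by $\delta/2$: for every $N$ and every integer $k \geq k_0(\delta/2, K)$ the number of sets $A \subset [N]$ with $|A| = k$ and $|A + A| \leq K|A|$ is at most $2^{\delta k/2}\binom{\frac12 Kk}{k}N^{\lfloor K + \delta/2\rfloor} \leq 2^{\delta k/2}\binom{\frac12 Kk}{k}N^{K+1}$. Next, using $N \leq 2^{k/C}$, the factor $N^{K+1}$ is at most $2^{(K+1)k/C}$, which is at most $2^{\delta k/2}$ once $C \geq 2(K+1)/\delta$; combining the two sub-exponential factors gives the bound $2^{\delta k}\binom{\frac12 Kk}{k}$ claimed in the conjecture.

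What remains is bookkeeping: the two lower bounds on $k$ that occur --- the hypothesis $k \geq C\log_2 N$ and the requirement $k \geq k_0(\delta/2, K)$ of Theorem~\ref{thm:counting} --- are consistent once $N$ is at least a constant threshold $N_0(\delta, K)$ (obtained after $C$ has been chosen for the absorption step), and for the finitely many smaller $N$ the ambient set $[N]$ has bounded size, so the count is an absolute constant, trivially dominated by $2^{\delta k}\binom{\frac12 Kk}{k} \geq 2^{\delta k}$ once $k$ is large; one arranges all of this by taking $C$ sufficiently large in terms of $\delta$ and $K$. I expect this to be entirely routine. The genuine obstacle is not a step of the argument but a limitation of it: the factor $N^{\lfloor K + \delta\rfloor}$ in Theorem~\ref{thm:counting} can be absorbed into $2^{\delta k}$ only when $K = O_\delta(C)$, i.e.\ when $K$ is bounded. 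To attain $K$ as large as $k/C$ one would need a form of Theorem~\ref{thm:counting} that is uniform in $K$ and free of (or with a much weaker) $N$-dependence, and producing such a statement appears to require substantially new ideas; I would leave that open.
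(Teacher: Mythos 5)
Your deduction is correct and is essentially the paper's own treatment: the paper does not prove the full conjecture either, but only remarks that Theorem~\ref{thm:counting} establishes it when $K = O(1)$, which is exactly your absorption of the $N^{\lfloor K+\delta\rfloor}$ factor into $2^{\delta k}$ via $N \le 2^{k/C}$ with $C$ chosen large in terms of $\delta$ and $K$ (the hypothesis $K \le k/C$ then also guarantees $k \ge k_0$ in all nontrivial cases). Your closing caveat is likewise accurate: the regime where $K$ may grow up to $k/C$ is left open by the paper as well.
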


It is easy to see that Theorem~\ref{thm:counting} establishes this when $K = O(1)$. In~\cite[Theorem~1.3]{ABMS} the authors obtained bounds which are worse by a factor of roughly $(4/3)^k$, but which hold uniformly for every $K = o(k)$. The bound of~\cite[Proposition~23]{G05} (if adapted to subsets of $\{1,\dots,N\}$ instead of $\Z/N\Z$, which avoids the loss of a factor of $2$ from a certain lifting argument) is still weaker (by a factor of $(3/2)^k$); however, unlike either~\cite{ABMS} or the present paper, the arguments there give non-trivial bounds on the number of sets $A \subset [N]$ with $|A| = k$ and $|A + A| \le m$ for arbitrary $N$, $m$ and $k$.

The rest of the paper is structured as follows. In Section~\ref{sec2-regularity} we state our main technical theorem, a decomposition of $A$ into random-like intervals, and the regularity and counting lemmas which imply it. In Section~\ref{sec3-smalldoubling} we use this theorem, together with various basic facts about Freiman dimension and the group $\Z / N\Z$, to deduce Theorem~\ref{thm:counting}. In Section~\ref{sec4-furtherconsequences} we deduce Theorem~\ref{sumset-compliment} from Theorem~\ref{thm:counting}, and in Section~\ref{sec5-dimensionbound} we use the isoperimetric inequality on $\Z^d$ to bound the number of sets with sumset of size $c|A|^2$. Finally, in Section~\ref{sec6-cliquenumber}, we put the pieces together and deduce Theorem~\ref{maincliquetheorem}. The paper is concluded with an appendix in which we prove the regularity and counting lemmas.

\section{A structural decomposition of subsets of \texorpdfstring{$\Z / p\Z$}{Z/pZ}}\label{sec2-regularity}

In this section we introduce our main tool in the proof of Theorem~\ref{thm:counting}. It is based on a regularity lemma, essentially due to Ruzsa~\cite{GRprime} and the first author, which (roughly) states that every set $A \subset \Z/p\Z$ has a ``granular'' structure: after dilating $A$, we may partition $\Z/p\Z$ into intervals of length $L \rightarrow \infty$ such that, in a certain sense, $A$ behaves like a random set on each block. In particular, writing $A^*_i$ for the intersection of the dilated set $A^*$ with the $i^{th}$ interval of length $p/q$,
$$I_i(q) \, = \, \Big\{ x \in \Z/p\Z \,:\, x/p \in \big[i/q, \, (i+1)/q \big] \Big\},$$
we shall be able to show that, for almost all pairs $(i,j) \in [q]^2$, either one of $A^*_i$ and $A^*_j$ is very small, or $A_i^* + A_j^*$ is very large. 

\begin{thm}\label{reg:cor}
For every $\eps > 0$, there exists $\delta = \delta(\eps) > 0$ such that the following is true. Let $p > p_0(\eps)$ be a sufficiently large prime and let $A \subset \Z/p\Z$ be a set. There is a dilate $A^* = \lambda A$ of $A$ and a prime $q$, $\frac{1}{\eps^{10}} \leq q \le  p^{1 - \delta}$, such that the following holds. If 
\begin{equation}\label{def:Aistar}
A^*_i \, := \, A^* \cap I_i(q)
\end{equation}
for each $i \in [q]$ then, for at least $(1 - \eps) q^2$ of the pairs $(i,j) \in [q]^2$, 
$$\min\big( |A^*_i|, |A^*_j| \big) \le \eps p/q \qquad \textup{or} \qquad \big| A^*_i + A^*_j \big| \ge (2 - \eps)p/q.$$
\end{thm}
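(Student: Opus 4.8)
The plan is to obtain Theorem~\ref{reg:cor} as a consequence of a Szemer\'edi-type regularity lemma for subsets of $\Z/p\Z$ together with a counting lemma, both of which are stated (and later proved in the appendix) in the paper. The first step is to normalise: given $A \subset \Z/p\Z$, we want to choose a dilate $A^* = \lambda A$ so that the distribution of $A^*$ ``across scales'' is controlled. The natural mechanism here is to look at the Fourier coefficients of $A$: there is some nonzero frequency $\xi$ at which $\widehat{1_A}$ is not too small unless $A$ is already very uniform, and after dilating by $\lambda \equiv \xi^{-1} \pmod p$ we may assume the relevant large Fourier coefficient sits at a bounded frequency. This is precisely the ``Ruzsa's Model Lemma''/regularity philosophy alluded to in the introduction, and it is what lets us pass from an arbitrary set to one that looks like a union of intervals at some scale $q$. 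I would invoke the Regularity Lemma (as stated in the paper) to produce a partition of $\Z/p\Z$ into $q$ arithmetic-progression-like pieces $I_i(q)$ on which $A^*$ is $\eps'$-regular, with $q$ bounded below by $\eps^{-10}$ and above by $p^{1-\delta}$ for a suitable $\delta(\eps)$; one can arrange $q$ prime by the usual trick of passing to a nearby prime (Bertrand) and absorbing the change into the error terms.

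The second step is to run the Counting Lemma on the regular pairs. Fix a pair $(i,j)$ that is $\eps'$-regular and suppose $|A^*_i|, |A^*_j| \ge \eps p/q$, i.e.\ both densities $d_i := |A^*_i|/(p/q)$ and $d_j := |A^*_j|/(p/q)$ are at least $\eps$. I want to conclude $|A^*_i + A^*_j| \ge (2-\eps)p/q$. The key point is that $A^*_i + A^*_j$ lives in an interval of length $2p/q$ (the sumset of two intervals of length $p/q$ is an interval of length $2p/q$), so it suffices to show that $A^*_i + A^*_j$ misses at most $\eps p/q$ of the elements of that target interval. For a fixed $z$ in the interior of the target interval, the number of representations $z = a + b$ with $a \in I_i(q), b \in I_j(q)$ is on the order of $p/q$, and regularity (the Counting Lemma) says that if $z$ had \emph{no} representation with $a \in A^*_i, b \in A^*_j$ then the pair $(A^*_i, A^*_j)$ would exhibit a density deviation of order $d_i d_j \gg \eps^2$ on a pair of subintervals, contradicting $\eps'$-regularity once $\eps'$ is chosen small enough in terms of $\eps$. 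Hence all but a negligible (at most $\eps p/q$) set of $z$ are hit, giving the desired bound on $|A^*_i + A^*_j|$.

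The third step is purely bookkeeping: the Regularity Lemma guarantees that all but $\eps' q^2 \le \eps q^2$ of the pairs $(i,j)$ are $\eps'$-regular, and for each regular pair we have just shown the dichotomy (small intersection OR large sumset) holds; so the dichotomy holds for at least $(1-\eps)q^2$ pairs, which is exactly the conclusion. One has to be slightly careful that the quantifier order is right — $\delta$ and the internal regularity parameter $\eps'$ should be chosen as functions of $\eps$ before $p$ is taken large — but this is routine.

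I expect the main obstacle to be the quantitative interface between the Counting Lemma and the sumset statement: ordinary graph counting lemmas count triangles or edges, whereas here we need a \emph{pointwise} statement (every $z$ in the target interval, bar a few, has a representation), which is a statement about the neighbourhood structure of a bipartite graph rather than about global edge counts. One needs the counting lemma in the form ``for a regular pair and for all but few vertices $z$ on the `sum side', the number of cherries $a$--$z$--$b$ is close to the expected value $d_i d_j (p/q)$'', and in particular is positive. Getting the error terms to line up so that the exceptional set of $z$ has size at most $\eps p/q$ (and not merely $o(p/q)$ with a bad dependence) is where the care is needed, and it is why the statement is phrased with the same $\eps$ throughout rather than with separate parameters; the slack in the lower bound $(2-\eps)p/q$ versus the true maximum $2p/q$ is exactly what absorbs this error. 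The dilation/Fourier step, while conceptually the starting point, is standard once one has the regularity lemma in the arithmetic setting, so I do not anticipate difficulty there beyond citing it correctly.
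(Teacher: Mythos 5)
Your proposal is correct and follows essentially the same route as the paper: apply the Regularity Lemma (with its internal parameter taken to be a suitable power of $\eps$, roughly $c\eps^7$) to obtain the dilate $A^*$ and the prime $q$, then feed each regular pair with both densities at least $\eps$ into the Counting Lemma, and finish by bookkeeping over pairs. The interface difficulty you anticipate does not in fact arise, because the paper's Counting Lemma is already stated in exactly the sumset form required ($|A+A'| \ge (2-8\eps)L$ for an $\eps^7$-regular pair with $|A|,|A'| \ge \eps L$) rather than as an edge- or cherry-counting statement.
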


Theorem~\ref{reg:cor} follows easily from a certain \emph{regularity lemma} and associated \emph{counting lemma}. We state these more general results now. They are of similar strength to \cite[Proposition~3]{GRprime}, but are formulated and proved slightly differently. We give the proof in the appendix.

\textsc{A regularity lemma.} Our main technical tool is a variant of the celebrated regularity lemma of Szemer\'edi, which is a result about graphs,  for subsets of $\Z/p\Z$. It is different to (and in some ways weaker than) the usual \emph{arithmetic regularity lemma} \cite{green-arithreg, gt-arithreg}, but it comes with much better bounds.

In order to state the regularity lemma, we need a couple of slightly technical definitions. If $I \subset \Z/p\Z$ is an interval and if $A \subset I$ is a set, we shall consider the \emph{balanced Fourier transform} $\hat{f}_A \colon \R / \Z \to \C$, which is defined to be\footnote{We shall suppress the dependence of $\hat{f}_A$ on $I$, since the intervals we shall consider will be disjoint. It will therefore always be clear that the sum is over the interval which contains $A$.} 
\[ 
\hat{f}_A(\theta) \, := \, \sum_{x \in I} \big( 1_A(x) - \alpha \big) e(x \theta),
 \]
where $\alpha = |A| / |I|$. Note that $e(t) = e^{2\pi i t}$, a standard notation in analytic number theory. 

\begin{defn}[$\eps$-regularity]\label{reg-def}
We say that a pair $(A,A')$ of subsets of $\Z/p\Z$ is \emph{$\eps$-regular} if for every $\theta \in \R/\Z$ we have either
$$|\hat{f}_A(\theta)| \leq \eps |I| \quad \textup{or} \quad |\hat{f}_{A'}(\theta)| \leq \eps |I'|$$ 
and furthermore if $\Vert \theta \Vert \leq \min\big( \frac{1}{\eps |I|}, \frac{1}{\eps|I'|} \big)$ we have both
$$|\hat{f}_A(\theta)| \leq \eps |I| \quad \textup{and} \quad |\hat{f}_{A'}(\theta)| \leq \eps |I'|.$$ 
\end{defn}

We remark that this definition will invariably be applied when $|I| \approx |I'|$. Note that $\Vert x \Vert$ means the distance of $x$ from the nearest integer, this being a well-defined function on $\R/\Z$. Given $A^*$ and $q$, let the sets $A_i^*$ be defined as in~\eqref{def:Aistar}. 

\begin{GRRL}\label{regularity-lemma}
For every $\eps > 0$ there exists $\delta = \delta(\eps) > 0$ such that the following holds for every sufficiently large prime $p$, and every $A \subset \Z/p\Z$. There is a dilate $A^* = \lambda A$ of $A$ and a prime $q$, $\frac{1}{\eps^{10}} \leq q \le p^{1-\delta}$, such that at least $(1 - \eps) q^2$ of the pairs $(A^*_i, A^*_j)$ are $\eps$-regular.
\end{GRRL}

We emphasize that the upper bound on $q$ is important, since it ensures that the lengths of the discrete intervals $I_i(q)$ tend to infinity. We remark also that the bounds on $\delta$ and $p$ are fairly reasonable; in fact, it is sufficient to take $\delta = 2^{-(1/\eps)^{O(1)}}$ and $p = 2^{2^{O(1/\eps)}}$. The lower bound $q \geq \frac{1}{\eps^{10}}$ is convenient for our applications; the proof could easily be modified to make it larger if need be.

The point of making the definition of $\eps$-regularity is that we may use the regularity lemma in conjunction with the following counting lemma. It states that if the pair $(A^*_i, A^*_j)$ is $\eps$-regular, and neither set is too small, then $A^*_i + A^*_j$ is almost all of $I_i(q) + I_j(q)$. 

\begin{Count}\label{counting lemma}
Let $\eps, L$ be positive parameters with $L > 16/\eps$.  Suppose that $I, I' \subset \Z/p\Z$ are intervals with $|I|, |I'| = L+ O(1)$. Suppose also that the pair of sets $A \subset I$ and $A' \subset I'$ is $\eps^7$-regular and that $|A|, |A'|\geq \eps L$. Then $|A + A'| \geq (2 - 8\eps) L$.
\end{Count}

Theorem~\ref{reg:cor} is an almost immediate corollary of the regularity and counting lemmas, the former being applied with $\eps$ replaced by $c \eps^7$ for a suitably small absolute constant $c > 0$. 

Proofs of the regularity and counting lemmas are given in the appendix.

\section{Counting sets with small sumset in \texorpdfstring{$[N]$}{N} and \texorpdfstring{$\Z/N\Z$}{Z/NZ}}\label{sec3-smalldoubling}

In this section we prove our main result about counting sets with small sumset, Theorem~\ref{thm:counting}. The strategy in all cases is the same basic one used in~\cite{G05}: we count Freiman isomorphism classes of sets $A$ with $|A| = k$ and $|A + A| \leq Kk$, and then count the number of Freiman homomorphisms of $A$ into $[N]$ or $\Z/N\Z$. Recall that two sets $A$ and $B$ are said to be \emph{Freiman isomorphic} if there exists a bijection $f \colon A \to B$ such that $a + b = c + d$ if and only if $f(a) + f(b) = f(c) + f(d)$, for every $a,b,c,d \in A$.  Much more on Freiman homomorphisms may be found in~\cite{G05,TV} (for example). 

A key observation of Ruzsa, established below in a form suitable for our purposes, is that every set $A$ of integers with $|A + A| \leq K|A|$ has a ``dense model'', that is to say a Freiman isomorphic copy sitting inside some cyclic group $\Z/p\Z$ as a fairly dense subset. We begin by studying this ``dense'' situation (which is actually a special case of Theorem~\ref{thm:counting}).

\begin{prop}\label{mod:p:count}
If $\delta > 0$ and $p > p_0(\delta)$ is a sufficiently large prime, then the following holds for every $k,m \in \N$ with $\delta p \leq k < m \leq (1 - \delta) p$. There are at most $2^{\delta p} \binom{m/2}{k}$ sets $A \subset \Z / p\Z$ with $|A| = k$ and $|A + A| \leq m$.
\end{prop}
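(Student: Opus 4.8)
The plan is to use the structural decomposition provided by Theorem~\ref{reg:cor}. Given a set $A \subset \Z/p\Z$ with $|A| = k$ and $|A+A| \le m \le (1-\delta)p$, apply Theorem~\ref{reg:cor} (with $\eps$ a small function of $\delta$) to obtain a dilate $A^* = \lambda A$ and a prime $q$ with $\tfrac{1}{\eps^{10}} \le q \le p^{1-\delta}$ such that, writing $A^*_i = A^* \cap I_i(q)$, for all but $\eps q^2$ pairs $(i,j)$ we have $\min(|A^*_i|,|A^*_j|) \le \eps p/q$ or $|A^*_i + A^*_j| \ge (2-\eps)p/q$. Since dilation is a Freiman automorphism of $\Z/p\Z$, it suffices to count the possible dilates $A^*$ and multiply by $p$; in fact, since $|A+A| = |A^*+A^*|$, the doubling hypothesis transfers directly to $A^*$. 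The key point is that the blocks $I_i(q)$ on which $A^*$ is ``large'' (say $|A^*_i| \ge \eps p/q$) cannot be too numerous: if there were $t$ such blocks, then choosing a typical pair among them forces $|A^*_i + A^*_j| \ge (2-\eps)p/q$, and these sumsets live in distinct (or nearly distinct) intervals $I_i(q) + I_j(q)$, so a counting argument on $|A^*+A^*|$ gives roughly $t \cdot (2-\eps)p/q \lesssim m$, hence $t \lesssim (1+O(\eps))\tfrac{mq}{2p}$.

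Given this, I would split $A^*$ into its contribution from ``large'' blocks and ``small'' blocks. The large blocks number at most $t \approx \tfrac{mq}{2p}(1+O(\eps))$, and within each such block $I_i(q)$ (of length $p/q$) we may place the elements of $A^*_i$ arbitrarily; summing $|A^*_i|$ over large $i$ is at most $k$, so the number of ways to fill the large blocks is at most the number of ways to choose a subset of size $\le k$ from a union of $t$ intervals each of length $p/q$, i.e. at most $\binom{t \cdot p/q}{k} \le \binom{(1+O(\eps))m/2}{k}$, which is the desired main term up to the $2^{\delta p}$ slack (using that $\binom{(1+\eps')m/2}{k} \le 2^{O(\eps' m)}\binom{m/2}{k}$ when $k \asymp m \asymp p$). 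The small blocks contribute at most $\eps p/q$ elements each, for at most $\eps p$ elements total across all $q$ blocks; the number of ways to choose these is at most $\binom{p}{\eps p} \le 2^{H(\eps)p} \le 2^{\delta p/2}$ for $\eps$ small enough in terms of $\delta$. Finally one multiplies by the number of choices of $\lambda$ (at most $p$), of $q$ (at most $p$), and of which blocks are designated ``large'' (at most $2^q \le 2^{\delta p/2}$ for $p$ large), all of which are absorbed into $2^{\delta p}$.

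The main obstacle I anticipate is making the bound $t \lesssim (1+O(\eps))\tfrac{mq}{2p}$ on the number of large blocks genuinely rigorous, because the sumsets $A^*_i + A^*_j$ for different pairs of large blocks are \emph{not} disjoint — the interval $I_i(q) + I_j(q)$ depends only on $i+j \bmod q$, so many pairs land in the same target interval. The fix is to look only at ``diagonal-type'' sums: order the large blocks as $i_1 < \dots < i_t$ and consider the sets $A^*_{i_r} + A^*_{i_r}$ (or $A^*_{i_r} + A^*_{i_{r+1}}$), which occupy distinct intervals of the form $I_{2i_r}(q)$ (length $\approx 2p/q$, but these can be taken to overlap in a controlled way, or one passes to every other block to make them genuinely disjoint). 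Each contributes $\ge (2-\eps)p/q$ to $|A^*+A^*|$ provided the pair $(i_r,i_r)$ is among the good pairs and $|A^*_{i_r}| \ge \eps p/q$; discarding the $O(\eps q)$ blocks involved in bad pairs and the constant number of edge effects, we still get $(t - O(\eps q))(2-\eps)p/q \cdot \tfrac{1}{2} \le |A^*+A^*| \le m$, which rearranges to $t \le (1+O(\eps))\tfrac{mq}{2p} + O(\eps q) = (1+O(\eps))\tfrac{mq}{2p}$, using $q \ge 1/\eps^{10}$ and $m \ge \delta p$ to absorb the $O(\eps q)$ error into the multiplicative $O(\eps)$. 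Once this combinatorial bound is in hand, everything else is routine binomial-coefficient estimation, and choosing $\eps = \eps(\delta)$ small enough at the end makes all the error exponents add up to at most $\delta p$.
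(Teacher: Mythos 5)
Your overall architecture matches the paper's: apply Theorem~\ref{reg:cor}, identify the ``large'' blocks, show that they occupy total measure at most $(\frac12+O(\eps))m$, and finish with the binomial count $p\cdot 2^q\sum_\ell\binom{p}{\ell}\binom{(1+O(\eps))m/2}{k-\ell}$. The final counting step is fine. The gap is exactly where you anticipated it: the bound $t\le(1+O(\eps))\frac{mq}{2p}$ on the number of large blocks, and your proposed fix does not deliver it. Using only diagonal pairs $(i_r,i_r)$ (or consecutive pairs $(i_r,i_{r+1})$), the target regions $I_{i_r}(q)+I_{i_r}(q)\subset I_{2i_r}(q)\cup I_{2i_r+1}(q)$ are indexed by the $t$ distinct residues $2i_r\bmod q$, but consecutive regions in this chain can share a full interval of length $p/q$ (this happens whenever $2i_s=2i_r+1$, i.e.\ $i_s=i_r+\frac{q+1}{2}$), so their union can have measure as small as $(t+1)p/q$ rather than $2tp/q$. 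The honest conclusion is therefore only $m\ge(1-O(\eps))\,t\,p/q$, i.e.\ $t\le(1+O(\eps))\frac{mq}{p}$ --- off by exactly the factor of $2$ that the proposition is about. (Your displayed rearrangement hides this: from $(t-O(\eps q))(2-\eps)\frac{p}{q}\cdot\frac12\le m$ one gets $t\le(1+O(\eps))\frac{mq}{p}+O(\eps q)$, not $\frac{mq}{2p}$.) With $t\approx mq/p$ the main term becomes $\binom{(1+O(\eps))m}{k}\ge 2^{k}\binom{m/2}{k}$, and since $k\ge\delta p$ this loss cannot be absorbed into $2^{\delta p}$.

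The missing ingredient is an additive-combinatorial input on the \emph{index set} $S\subset\Z/q\Z$ of large blocks: one needs that $A^*+A^*$ meets roughly $\min(2|S|,q)$ distinct intervals $I_s(q)$, not merely $|S|$ of them. The paper gets this from Pollard's theorem (a stability form of Cauchy--Davenport): there are at least $\min(2|S|,q)-\beta q$ residues $s$ with at least $\frac18\beta^2q$ representations $s=i+j$, $(i,j)\in S\times S$; the high multiplicity guarantees that all but $O(\eps q/\beta^2)$ of these residues admit at least one \emph{good} (i.e.\ $\eps$-regular) representation, and each such $I_s(q)$ is then filled to measure $(1-\eps)p/q$. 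This yields $m\ge(1-O(\delta))\min(2|S|,q)\,p/q$; the case $2|S|\ge q$ is excluded using $m\le(1-\delta)p$, and one concludes $|S|\,p/q\le(\frac12+\delta)m$ as required. Without some version of this step (Cauchy--Davenport on $S$ plus a multiplicity argument to discard bad pairs), the factor of $2$ in the main term is out of reach.
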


Proposition~\ref{mod:p:count} follows easily from Theorem~\ref{reg:cor}, together with the following lemma, an easy consequence of a result of Pollard~\cite{Pollard}. It is a kind of ``stability Cauchy-Davenport theorem''.

\begin{lemma}[Pollard]\label{pollard-lem}
Let $\beta > 0$, let $q > 16/\beta^2$ be a prime, and let $S$ be a subset of $\Z/q\Z$. Then there are at least $\min(2|S|,q) - \beta q$ elements of $\Z/q\Z$ having at least $\frac{1}{8}\beta^2 q$ representations as $i + j$ with $(i,j) \in S \times S$. 
\end{lemma}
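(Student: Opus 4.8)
The plan is to deduce the lemma from Pollard's theorem~\cite{Pollard} in the following form: if $q$ is prime, $A,B\subseteq\Z/q\Z$ and $1\le\ell\le\min(|A|,|B|)$, then, writing $r_{A,B}(g)$ for the number of representations $g=a+b$ with $(a,b)\in A\times B$ and $N_\ell:=\#\{g:r_{A,B}(g)\ge\ell\}$, one has $N_1+\cdots+N_\ell\ge\ell\cdot\min(q,|A|+|B|-\ell)$. Specialising to $A=B=S$ (so $r:=r_{S,S}$ and $N_\ell=\#\{g:r(g)\ge\ell\}$) gives
\[
\sum_{i=1}^\ell N_i\;\ge\;\ell\cdot\min\bigl(q,\,2|S|-\ell\bigr)\qquad(1\le\ell\le|S|).
\]
Put $t:=\lceil\tfrac18\beta^2q\rceil$; since $q>16/\beta^2$ we have $t\ge2$ and $t-1<\tfrac18\beta^2q$. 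We must show $N_t\ge\min(2|S|,q)-\beta q$. We may assume $\beta<1$ and $\min(2|S|,q)>\beta q$, since otherwise the right-hand side is $\le0$; then, as $2|S|\le2(t-1)$ would force $2|S|<\tfrac14\beta^2q<\beta q$, we also have $t\le|S|$.

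The key point is the \emph{elementary} inequality: for any $\ell\in[t,|S|]$, monotonicity of $(N_i)$ together with $N_i\le q$ gives $\sum_{i=1}^\ell N_i\le(t-1)q+(\ell-t+1)N_t$, so, combined with the displayed bound,
\[
N_t\;\ge\;\frac{\ell\cdot\min(q,2|S|-\ell)-(t-1)q}{\ell-t+1}.
\]
Everything now hinges on choosing $\ell$ well — the naive choice $\ell=t$ is hopeless, as the $(t-1)q$ term then swamps the numerator.

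If $2|S|\ge q$, take $\ell=2(t-1)$ (which is $<\tfrac14\beta^2q<|S|$); since $\min(q,2|S|-2(t-1))\ge q-2(t-1)$, the bound gives $N_t\ge 2\bigl(q-2(t-1)\bigr)-q=q-4(t-1)>q-\tfrac12\beta^2q\ge q-\beta q$, as required. If instead $2|S|<q$, then $\min(q,2|S|-\ell)=2|S|-\ell$ for every $\ell\le|S|$, and writing $m=t-1$, $\ell=m+u$ the bound becomes
\[
N_t\;\ge\;2|S|-2m-\Bigl(u+\frac{m(q-2|S|+m)}{u}\Bigr),\qquad 1\le u\le|S|-m.
\]
Choosing $u=\lfloor\sqrt{m(q-2|S|+m)}\rfloor$, which lies in the allowed range because $m<\tfrac18\beta^2q<|S|^2/q$, AM--GM yields $N_t\ge 2|S|-2m-2\sqrt{m(q-2|S|+m)}-1$; since $m<\tfrac18\beta^2q$, $q-2|S|<(1-\beta)q$ and $\beta<1$, one checks that the quantity subtracted from $2|S|$ is at most $\beta q$. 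Either way $N_t\ge\min(2|S|,q)-\beta q$.

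The only genuine obstacle is the one flagged above: the crude subtraction loses a factor of order $q$, so one must optimise the choice of $\ell$, and then carry out the elementary (but mildly delicate when $\beta$ is close to $1$) verification that the constant $\tfrac18$ in the statement is compatible with the hypothesis $q>16/\beta^2$; the rounding of $u$ to an integer costs only $O(1)$, which is comfortably absorbed. Pollard's theorem itself requires no further work here, being quoted directly from~\cite{Pollard} (it can also be recovered by applying Kneser's theorem to the level sets of $r_{A,B}$).
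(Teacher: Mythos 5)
Your proof is correct and follows essentially the same route as the paper: quote Pollard's inequality at some level $\ell$, then exploit the monotonicity of $(N_i)$ together with the trivial bound $N_i \le q$ to pass from the average $\frac{1}{\ell}(N_1+\dots+N_\ell)$ to a lower bound on $N_t$ at the smaller threshold $t \approx \tfrac18\beta^2 q$. The only difference is bookkeeping — the paper fixes $\ell=\lfloor\tfrac12\beta q\rfloor$ and reads off the bound at level $\lfloor\tfrac12\beta\ell\rfloor$, whereas you fix the target level and optimise $\ell$ via AM--GM — which costs you a case split and a slightly delicate constant check but changes nothing essential.
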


\begin{proof}
Pollard in fact proved the following: if $N_t$ is the number of elements $x \in \Z/q\Z$ with at least $t$ representations as $i + j$ with $(i,j) \in S \times S$ then 
\[ 
\frac{N_1 + \dots + N_t}{t} \, \geq \, \min(2|S|,q) - t.
\] 
Now $N_i \leq q$ for all $i$, and obviously $N_1 \geq N_2 \geq \dots \geq N_t$. Therefore we have
\[  
\frac{\beta q}{2} + N_{\lfloor \beta t/2\rfloor} \, \geq \, \frac{\lfloor \beta t/2\rfloor q}{t} +  N_{\lfloor \beta t/2\rfloor} \, \geq \, \frac{N_1 + \dots + N_t}{t} \, \geq \, \min(2|S|,q) - t.
\]
Choosing $t = \lfloor \frac{1}{2}\beta q\rfloor$ we get
\[ N_{m} \geq \min(2|S|,q) - \beta q,\] 
where $m = \lfloor \frac{1}{2}\beta \lfloor \frac{1}{2}\beta q\rfloor \rfloor$. A short calculation confirms that $m \geq \frac{1}{8} \beta^2 q$ if $q > 16/\beta^2$.
\end{proof}

We can now deduce Proposition~\ref{mod:p:count} from Theorem~\ref{reg:cor} and Lemma~\ref{pollard-lem}.

\begin{proof}[Proof of Proposition~\ref{mod:p:count}]
Let $\delta > 0$, and assume without loss of generality that $\delta$ is sufficiently small. Let $p \ge p_0(\delta)$ be a sufficiently large prime, set $\beta = \delta^2/4$ and $\eps = \beta^3/8$, and let $k,m \in \N$ with $\delta p \le k < m \le (1 - \delta)p$. Choose an arbitrary set $A \subset \Z / p\Z$ with $|A| = k$ and $|A + A| \leq m$, and apply Theorem~\ref{reg:cor}. Since $p$ was chosen sufficiently large, we obtain a dilate $A^* = \lambda A$ of $A$ and a prime $q$, with $\frac{1}{\eps^{10}} \leq q \leq \beta p$, such that, for at least $(1 - \eps) q^2$ of the pairs $(i,j) \in [q]^2$, either
\begin{equation}\label{eq:regularity:AiAj}
\min\big( |A^*_i|, |A^*_j| \big) \le \eps L \qquad \textup{or} \qquad \big| A^*_i + A^*_j \big| \ge \big(2 - \eps \big) L,
\end{equation}
where $A^*_i$ is as defined in~\eqref{def:Aistar} and $L = p/q$. Let 
$$S \, := \, \Big\{ i \in \Z/q\Z \,:\, |A^*_i| > \eps L \Big\},$$
and note that, since $k \ge \delta p$, it follows that $|S| \ge \delta q / 2$. By Lemma~\ref{pollard-lem}, there is a set $T \subset \Z/q\Z$, containing at least $\min(2|S|,q) - \beta q$ elements of $\Z/q\Z$, such that every $t \in T$ has at least $\beta^2 q / 8$ representations as $i + j$ with $(i,j) \in S \times S$. 

Now, let us say that a pair $(i,j) \in S \times S$ is \emph{good} if~\eqref{eq:regularity:AiAj} holds, and recall that all but at most $\eps q^2$ pairs of $S \times S$ are good. Note that, as a consequence of \eqref{eq:regularity:AiAj}, if $(i,j) \in S \times S$ is good then
\begin{equation}\label{consequence} 
\big| ( A^*_i + A^*_j ) \cap I_{i+j}(q) \big| \, \geq \, ( 1 - \eps ) L.
\end{equation}
Let $T_* \subset T$ consist of those elements with at least one representation as $i + j$ with $(i,j)$ good. We claim that $|T_*| \geq \min(2|S|,q) - \big( \beta +  \frac{8\eps}{\beta^2} \big)q$. Indeed, this follows since $|T| \geq \min(2|S|, q) - \beta q$, all but at most $\eps q^2$ pairs of $S \times S$ are good, and every element $t \in T$ has at least $\beta^2 q / 8$ representations as $i + j$ with $(i,j) \in S \times S$. Writing $\tilde A := \bigcup_{i \in S} A^*_i$, it follows from this observation, together with~\eqref{consequence}, that
\[ m \, \ge \, |A + A| \, \geq \, |\tilde A + \tilde A|  \, \geq \, \big(\min(2|S|,q) - 2\beta q \big) ( 1 - \eps ) L \, > \, \big(1 - \delta \big)\min\big( 2|S|, q \big) L.
\]
since $\eps = \beta^3/8$ and $|S| \ge \delta q / 2 \ge 2 \beta q / \delta$. Now, if $2|S| \ge q$ then we obtain $m > \big( 1 - \delta \big) p$, which contradicts our assumption. It follows that $m \geq 2 \big(1 - \delta \big) |S| L$, and hence
\[ 
\bigg| \bigcup_{i \in S} I_i(q) \bigg| = |S|  L \, \le \, \bigg( \frac{1}{2} + \delta \bigg) m.
\]
Finally, observe (from the definition of $S$) that $A^*$ is composed of a subset of $\bigcup_{i \in S} I_i(q)$ together with at most $\eps p$ extra points. Moreover, the number of choices for the set $S \subset \Z/q\Z$ is at most $2^q$, and $A$ is simply a dilate of $A^*$. Hence, the number of sets $A \subset \Z / p\Z$ with $|A| = k$ and $|A + A| \leq m$ is at most
\[
p 2^q \sum_{\ell = 0}^{\eps p} \binom{p}{\ell} {(1 + 2\delta)m/2 \choose k - \ell} \, \le \, 2^{\delta p} {m/2 \choose k},
\]
since $p 2^q \sum_{\ell = 0}^{\eps p} \binom{p}{\ell} < 2^{\delta^2 p}$ and $\delta > 0$ is sufficiently small. The proposition follows.
\end{proof}

\vspace{11pt}

\textsc{Dense models.}  We turn now to a discussion of the ``dense models'' briefly alluded to in the introduction to this section. Ruzsa showed that if $A \subset \Z$ is a set with $|A + A| \leq K|A|$ and if $p$ is a prime with $p \geq K^C|A|$ then there is a set $A' \subset A$, $|A'| \geq |A|/2$, which is Freiman isomorphic to a subset of $\Z/p\Z$. A defect of this result is the fact that we must pass to a set of size $|A|/2$. By applying a result of the first author and Ruzsa~\cite{GRrect} (which gives a simpler proof, with somewhat better bounds, for a result of Bilu, Lev and Ruzsa~\cite{blr}) we can remove this blemish at the expense of increasing $p$.

\begin{prop}\label{weak-full-ruzsa-model}
Suppose that $A \subset \Z$ is a set with $|A + A| \leq K|A|$, and let $p$ be a prime with $p \geq 2(32 K)^{12K^2}|A|$. Then $A$ is Freiman isomorphic to a subset of $\Z/p\Z$.
\end{prop}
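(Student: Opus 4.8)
The plan is to reduce the statement to a ``compression'' problem that the cited rectification results solve directly. First I would normalize: the conclusion is unaffected if we translate $A$ so that $\min A = 0$ and then divide every element by $d := \gcd(A)$ (equivalently $\gcd(A-A)$, once $0 \in A$), since both operations are Freiman isomorphisms and since ``being Freiman isomorphic to a subset of $\Z/p\Z$'' is preserved under Freiman isomorphism. So we may assume $0 \in A$ and $\langle A - A\rangle = \Z$. I claim it then suffices to exhibit a Freiman-isomorphic copy $A'$ of $A$ with $A' \subseteq \{0,1,\dots,M-1\}$ and $2M \le p$: reducing $A'$ modulo $p$ is manifestly a Freiman homomorphism; it is injective because two congruent elements of $A'$ differ by an integer of absolute value at most $M-1 < p$ divisible by $p$, hence equal to $0$; and it creates no spurious relations because $A'+A' \subseteq \{0,\dots,2M-2\}$, so $a'+b' \equiv c'+d' \pmod p$ forces the integer $a'+b'-c'-d' \in (-p,p)$ to vanish. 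Hence reduction mod $p$ is a Freiman isomorphism of $A'$, and therefore of $A$, onto a subset of $\Z/p\Z$.

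It remains to compress $A$ into $M \le (32K)^{12K^2}|A|$ consecutive integers, and this is exactly where I would invoke the work of Green and Ruzsa~\cite{GRrect} (sharpening Bilu--Lev--Ruzsa~\cite{blr}). Their results provide, for $A \subset \Z$ with $|A+A| \le K|A|$ and $\langle A-A\rangle = \Z$, a Freiman-isomorphic copy of \emph{all} of $A$ inside an interval of length at most essentially $(32K)^{12K^2}|A|$ --- the crucial gain over the classical Ruzsa model lemma being the absence of any loss of a dense subset. I would apply this and translate the resulting interval to start at $0$, obtaining the set $A'$ above. (If one prefers to quote \cite{GRrect} only in its ``a subset of $\Z/N\Z$ of small doubling and density below $(32K)^{-12K^2}$ is rectifiable'' form, one first embeds $A$ into a huge auxiliary prime cyclic group $\Z/q\Z$ --- harmless since $q$ is enormous --- applies rectification there, and then uses the change-of-modulus bookkeeping of the first paragraph to descend to the given $p$.)

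Finally I would check the arithmetic of the constants: with $M \le (32K)^{12K^2}|A|$ we obtain $2M \le 2(32K)^{12K^2}|A| \le p$, which is precisely the hypothesis on $p$, so the reduction step of the first paragraph applies and the proposition follows.

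The genuine obstacle is the step I am quoting: squeezing a Freiman-isomorphic copy of the \emph{entire} set $A$ into an interval whose length is only a $K$-dependent multiple of $|A|$, with no loss. The older Ruzsa model lemma achieves this only after discarding up to half of $A$; removing that loss is exactly what \cite{blr,GRrect} accomplish, via a Fourier/covering argument whose cost is the large but explicit constant $(32K)^{12K^2}$. Everything else here --- the normalization, the ``no wraparound'' verification, and the constant-chasing that turns the interval length into the threshold $2(32K)^{12K^2}|A|$ --- is routine.
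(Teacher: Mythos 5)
Your proposal is correct and follows essentially the same route as the paper: quote the Green--Ruzsa rectification theorem (\cite[Theorem 1.4]{GRrect}) to place a Freiman-isomorphic copy of all of $A$ in an interval of length at most $(32K)^{12K^2}|A|$, then compose with reduction modulo $p$ for $p$ at least twice that length. Your extra verification that the mod-$p$ projection is a Freiman isomorphism (no wraparound) is exactly the routine check the paper leaves implicit.
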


\begin{proof}
It is proven in~\cite[Theorem 1.4]{GRrect} that $A$ is Freiman isomorphic to a subset of $[m] \subset \Z$, where $m \leq (32 K)^{12 K^2} |A|$. Now simply compose this Freiman isomorphism with the projection $\Z \rightarrow \Z/p\Z$, where $p$ is any prime greater than $2m$. \end{proof}

\vspace{11pt}

\textsc{Freiman dimension.} To understand the number of Freiman embeddings of a set $A$ into $[N]$ we use the concept of \emph{Freiman dimension}. Given a set $A$ in some abelian group $\Gamma$, we define the Freiman dimension $r_{\Q}(A)$  of $A$ to be $\dim(\Hom(A, \Q)) - 1$, where $\Hom(A, \Q)$ is the vector space over $\Q$ of Freiman homomorphisms $\phi \colon A \rightarrow \Q$.  The $-1$ is included so that an arithmetic progression such as $[n]$ has Freiman dimension 1 and not 2, and a single point has Freiman dimension $0$. The following proposition is essentially due to Freiman; for completeness we shall sketch the proof.

\begin{prop}\label{prop:freiman} 
Let $\Gamma$ be an abelian group, and let $A \subset \Gamma$. If $|A + A| \leq K|A|$, then the Freiman dimension $r = r_{\Q}(A)$ satisfies
\[  r \leq K - 1 + \frac{1}{|A|}\binom{r+1}{2}. \] 
\end{prop}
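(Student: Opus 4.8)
The plan is to exploit a Freiman homomorphism $\phi \colon A \to \Q^r$ that witnesses the dimension, translate so that $0 \in \phi(A)$, and then count the sumset $A+A$ from below using the (rational) affine span of $\phi(A)$. After translating, $\phi(A)$ spans $\Q^r$ as a vector space (since its affine span has dimension $r$ and contains $0$), so we may pick elements $a_0, a_1, \dots, a_r \in A$ with $\phi(a_0) = 0$ and $\phi(a_1), \dots, \phi(a_r)$ linearly independent over $\Q$. The key point is that distinct sums among these give distinct sums in $\phi(A)+\phi(A)$, hence — because $\phi$ is a Freiman isomorphism on $A$, so in particular $a+b \mapsto \phi(a)+\phi(b)$ is well defined and injective on sums — distinct elements of $A+A$.

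The main step is a counting estimate. First I would note the $r+1$ ``basepoint'' sums $a_0 + a_i$ ($0 \le i \le r$) are distinct, since $\phi(a_0)+\phi(a_i) = \phi(a_i)$ and the $\phi(a_i)$ are distinct (including $\phi(a_0)=0$). Next, for each remaining element $a \in A \setminus \{a_0, \dots, a_r\}$, I would consider the sums $a + a_i$ for $i = 0, \dots, r$; these $r+1$ sums are again pairwise distinct (they map under $\phi$ to $\phi(a) + \phi(a_i)$, and the $\phi(a_i)$ are distinct). The point is to show that, across all $a$, ``most'' of these contribute new elements of $A+A$. Concretely: the map sending a pair $\{a, a_i\}$ to $\phi(a) + \phi(a_i) \in \phi(A)+\phi(A)$ is injective on sums. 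Count the multiset of all sums $a + a_i$ with $a \in A$ and $i \in \{0, \dots, r\}$: there are $(r+1)|A|$ such pairs (with multiplicity), but each \emph{unordered} sum $x + y$ with $x, y \in A$ is counted at most a bounded number of times — in fact at most twice when both $x$ and $y$ happen to lie in $\{a_0, \dots, a_r\}$, i.e. the over-counting is confined to the $\binom{r+1}{2}$ sums $a_i + a_j$ with $0 \le i < j \le r$, each of which is counted (at most) twice instead of once. Therefore
\[
|A+A| \;\ge\; (r+1)|A| - \binom{r+1}{2},
\]
since $(r+1)|A|$ ordered incidences produce at least $(r+1)|A| - \binom{r+1}{2}$ distinct sums. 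Combining with $|A+A| \le K|A|$ gives $K|A| \ge (r+1)|A| - \binom{r+1}{2}$, i.e.
\[
r \;\le\; K - 1 + \frac{1}{|A|}\binom{r+1}{2},
\]
as required.

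The step I expect to be the main obstacle — or at least the one needing the most care — is the bookkeeping in the incidence count: verifying that every sum $a + a_i$ with $a \notin \{a_0, \dots, a_r\}$ is genuinely ``new'' relative to the basepoint sums in a way that is not double-counted, and that the only over-counting occurs among the $\binom{r+1}{2}$ internal sums $a_i + a_j$. The clean way to phrase this is: work entirely inside $\phi(A) \subset \Q^r$ (legitimate since $\phi$ is a Freiman isomorphism, so $|\phi(A)+\phi(A)| = |A+A|$ and the relevant injectivity holds), translate so one of the chosen points is $0$, and observe that the $r+1$ cosets $\phi(a_i) + \phi(A)$ cover $\phi(A)+\phi(A)$ with total size $(r+1)|A|$ but pairwise intersect only in the finite set $\{\phi(a_i) + \phi(a_j) : i \ne j\} \cup \{\phi(a_i)\}$; a short inclusion–exclusion then yields the bound $|A+A| \ge (r+1)|A| - \binom{r+1}{2}$. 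I would present the linear-independence selection and this inclusion–exclusion as the two substantive steps, leaving the identification $|\phi(A)+\phi(A)| = |A+A|$ to the definition of Freiman isomorphism.
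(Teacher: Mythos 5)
Your overall architecture is the same as the paper's: realise $A$ (via its Freiman dimension) as a subset of $\Q^r$ whose affine span is the whole space, prove the inequality $|A+A| \ge (r+1)|A| - \binom{r+1}{2}$, and rearrange against $|A+A| \le K|A|$. The paper does exactly this, obtaining the embedding from Lemma~\ref{lem:basicG05}(a) and quoting the inequality as Freiman's Lemma. The gap is in your proof of that inequality. Your counting step asserts that a value can be written as $a + a_i$ (with $a \in A$, $0 \le i \le r$) in more than one way only when both summands lie in $\{a_0,\dots,a_r\}$, so that the translates $a_i + A$ overlap only in the $\binom{r+1}{2}$ internal sums. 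This tacitly assumes that $x+y = x'+y'$ forces $\{x,y\} = \{x',y'\}$ for $x,y,x',y' \in A$ --- a Sidon-type property that fails badly for exactly the sets at hand. Concretely, take $r = 1$ and $A = \{0,1,\dots,n-1\} \subset \Q$, with $a_0 = 0$ and $a_1 = 1$: then $(a_0 + A) \cap (a_1 + A) = \{1,\dots,n-1\}$ has $n-1$ elements rather than $\binom{2}{2} = 1$, and the union of the two translates has only $n+1$ elements, far short of the required $2n-1$. Freiman's Lemma is still tight here ($|A+A| = 2n-1$), but most of $A+A$ is not of the form $a + a_i$, so no bound restricted to those sums can deliver it.

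To repair the argument, either quote Freiman's Lemma as the paper does (a proof is in the cited reference \cite{GEdin}), or prove it by the standard convexity induction on $|A|$, whose base case $|A| = r+1$ (an affinely independent set) is the one situation where your count is actually valid. The remaining ingredients of your write-up --- the choice of $a_0,\dots,a_r$ with $\phi(a_1),\dots,\phi(a_r)$ linearly independent, the identification $|\phi(A)+\phi(A)| = |A+A|$, and the final rearrangement --- are all fine.
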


The proposition is a straightforward consequence of Freiman's Lemma, a proof of which may be found in, for example,~\cite[Proposition 3.3]{GEdin}.

\begin{FL}
Suppose that $A \subset \R^r$ is not contained in an affine subspace. Then
$$|A+A| \, \ge \, (r+1)|A| - {r+1 \choose 2}.$$ 
\end{FL}

We shall also use the following lemma, which follows immediately from~\cite[Lemma~13]{G05}.

\begin{lemma}\label{lem:basicG05}
Let $\Gamma$ be an abelian group, and let $A \subset \Gamma$ be a set with Freiman dimension $r = r_{\Q}(A)$. Then
\begin{itemize}
\item[$(a)$] There is a Freiman isomorphic image of $A$ in $\R^{r}$ which is not contained in any proper affine subspace.
\item[$(b)$] There are at most $N^{r+1}$ Freiman homomorphisms from $A$ into $[N]$. 
\end{itemize}
\end{lemma}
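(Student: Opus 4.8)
The plan is to follow \cite[Lemma~13]{G05} and deduce both parts from the elementary linear algebra of the $\Q$-vector space $V := \Hom(A,\Q)$, which by definition of the Freiman dimension satisfies $\dim_{\Q} V = r+1$. The only non-formal input is the remark that the constant function $\mathbf{1}\colon a \mapsto 1$ lies in $V$ (it respects every additive quadruple trivially, contributing the identity $2=2$), so we may complete it to a basis $\mathbf{1} = \phi_0, \phi_1, \dots, \phi_r$ of $V$.

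For part $(b)$, observe first that the evaluation functionals $\mathrm{ev}_a \in V^{*}$ ($a \in A$) span $V^{*}$: any $\phi \in V$ annihilated by all of them is identically zero. Hence one may choose $a_0, \dots, a_r \in A$ for which $\mathrm{ev}_{a_0}, \dots, \mathrm{ev}_{a_r}$ is a basis of $V^{*}$, and then every $\phi \in V$ — in particular every Freiman homomorphism $\phi \colon A \to [N] \subset \Q$ — is determined uniquely by the tuple $\big(\phi(a_0), \dots, \phi(a_r)\big) \in [N]^{r+1}$. This immediately yields at most $N^{r+1}$ Freiman homomorphisms $A \to [N]$.

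For part $(a)$, define $\Psi \colon A \to \Q^{r} \subset \R^{r}$ by $\Psi(a) := \big(\phi_1(a), \dots, \phi_r(a)\big)$. Each coordinate is a Freiman homomorphism, so $\Psi$ is one; moreover $\Psi(A)$ is not contained in any proper affine subspace of $\R^{r}$, since an affine dependence $\sum_{i=1}^{r} c_i \phi_i \equiv c_0$ on $A$ reads $\sum_{i \ge 1} c_i \phi_i = c_0 \mathbf{1}$ in $V$, which forces $c_0 = c_1 = \dots = c_r = 0$ because $\mathbf{1} = \phi_0$ completes $\phi_1, \dots, \phi_r$ to a basis. The one substantive point is that $\Psi$ is a Freiman \emph{isomorphism} onto its image. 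Since the $\phi_0$-coordinate contributes only $2=2$, the equality $\Psi(a) + \Psi(b) = \Psi(c) + \Psi(d)$ is equivalent to $\phi(a) + \phi(b) = \phi(c) + \phi(d)$ for all $\phi \in V$, so it suffices to know that $\Hom(A,\Q)$ separates non-relations of $\Gamma$: whenever $a + b \neq c + d$ in $\Gamma$ there is a Freiman homomorphism $\phi$ with $\phi(a) + \phi(b) \neq \phi(c) + \phi(d)$. In the situations we need — certainly when $\Gamma$ is torsion-free, so that $\langle A \rangle \cong \Z^{t}$ — this is transparent, since each coordinate projection $\Z^{t} \to \Z$ restricts to a Freiman homomorphism $A \to \Q$ and some such projection must distinguish the two sums; in general one makes the same point by comparing $A$ with the universal quotient $\Z^{A}/R$, $R = \langle \mathbf{e}_a + \mathbf{e}_b - \mathbf{e}_c - \mathbf{e}_d : a+b = c+d \text{ in } \Gamma \rangle$, the homomorphism $\Z^{A} \to \Gamma$ (sending $\mathbf{e}_x \mapsto x$) killing $R$ and so exhibiting $a \mapsto \mathbf{e}_a + R$ as a Freiman isomorphism onto its image.

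I expect this separation property — ruling out that $\Psi$ collapses genuinely distinct sums, equivalently the identification of the Freiman dimension with the torsion-free rank of the universal quotient $\Z^{A}/R$ — to be the only step requiring thought; the dimension count and part $(b)$ are bookkeeping. All of this is contained in \cite[Lemma~13]{G05}, which we invoke directly.
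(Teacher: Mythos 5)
Your overall route is the same as the paper's: both proofs rest on the linear algebra of $V=\Hom(A,\Q)$ and, ultimately, on \cite[Lemma~13]{G05}, which the paper simply quotes for the existence of $a_1,\dots,a_{r+1}\in A$ determining every Freiman homomorphism and supporting a Freiman isomorphism into $\R^{r+1}$ (whose image lies in the hyperplane $\sum_i\lambda_i=1\cong\R^r$). Your part $(b)$ is a clean, self-contained version of exactly this: the evaluation functionals span $V^*$, so some $r+1$ of them form a basis and every $\phi\in V$ is determined by its values there. Your part $(a)$ differs only cosmetically, dropping the constant function from a basis instead of passing to the affine hyperplane.

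The one place where your write-up does not close is the step you yourself flag: that $\Psi$ is a Freiman \emph{isomorphism}, i.e.\ that $\Hom(A,\Q)$ separates non-relations $a+b\neq c+d$. Your reduction to the universal quotient $\Z^A/R$ does not finish the job, because $\Z^A/R$ need not be free: a $\Q$-valued homomorphism cannot distinguish elements that differ by torsion, so the ``coordinate projection'' argument is unavailable there. The subtlety is genuine: for $A=\{0,1\}\subset\Z/2\Z$ the relation $1+1=0+0$ forces every $\phi\in\Hom(A,\Q)$ to be constant, so $r_{\Q}(A)=0$ and no injective $\Psi$ into $\R^0$ exists; the separation property (and indeed the statement of part $(a)$ read completely literally) requires the class of $\mathbf{e}_a+\mathbf{e}_b-\mathbf{e}_c-\mathbf{e}_d$ to be non-torsion in $\Z^A/R$ whenever $a+b\neq c+d$. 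In every application in this paper $A$ is (Freiman isomorphic to) a subset of $\Z$, where your torsion-free argument via coordinate projections is perfectly adequate, and the paper itself outsources precisely this point to \cite[Lemma~13]{G05}; but as a standalone proof for arbitrary abelian $\Gamma$ the general case of your separation claim is not established by what you wrote.
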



\begin{proof}
By~\cite[Lemma~13]{G05}, there exist elements $a_1,\dots, a_{r+1} \in A$ such that there is a unique Freiman isomorphism $\phi \colon A \rightarrow \R^{r+1}$ with $\phi(a_i) = e_i$ for each $i \in \{1,\ldots,r+1\}$. Moreover, for each $a \in A$ we have $\phi(a) = \lambda_1 \phi(a_1) + \dots + \lambda_{r+1} \phi(a_{r+1})$ for some $\lambda_i \in \R$ with $\sum_i \lambda_i = 1$. Hence $\phi(A)$ lies in the affine subspace of $\R^{r+1}$ given by $\lambda_1 + \dots + \lambda_{r+1} = 1$, which is isomorphic to $\R^r$, as required.

For part~$(b)$ note that, again by~\cite[Lemma~13]{G05}, once we have chosen where to map $a_1,\ldots,a_{r+1}$, the remaining elements of $\phi(A)$ are determined uniquely. There are thus at most $N^{r+1}$ chocies, as claimed.
\end{proof}

We can now easily deduce Proposition~\ref{prop:freiman}.

\begin{proof}[Proof of Proposition~\ref{prop:freiman}]
By Lemma~\ref{lem:basicG05}$(a)$ there is a Freiman isomorphic image of $A$ in $\R^{r}$ which is not contained in any proper affine subspace. By Freiman's Lemma, we have
\[ K|A| \geq |A + A| \geq (r+1)|A| - \binom{r+1}{2}.\] 
The result follows immediately. 
\end{proof}


\textsc{Two lemmas relevant to $\Z/N\Z$.} The proof of part (ii) of Theorem~\ref{thm:counting}, which concerns subsets of $\Z/N\Z$, is a little trickier than the proof of part (i), which is concerned with subsets of $[N]$. To handle it we need two special lemmas, proven in this subsection. 

Our strategy will, roughly speaking, be to map each $A \subset \Z/N\Z$ to many sets of the form $\lambda A + \mu$ which are contained in the interval $\{1,\ldots,N/2\}$. It will then be straightforward to deduce the result for $\Z/N\Z$ from the corresponding result for $[N]$. The first lemma shows that there are many such maps. In the following $C$ denotes an absolute constant which could be specified if desired, though on some occasions this might be hard work. Different instances of the letter $C$ may denote different constants.

\begin{lemma}\label{many-rect} 
Suppose that $A \subset \Z/N\Z$ is a set of cardinality $k$ and that $|A + A| \leq Kk$. Then there are at least $\exp(-K^C) N / k$ dilates $\lambda A$ of $A$ that are contained in a subinterval of $\Z/N\Z$ of length less than $N/4$.
\end{lemma}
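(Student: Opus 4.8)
The plan is to produce the dilates by a covering/pigeonhole argument applied after reducing to a Freiman model. First I would invoke Proposition~\ref{weak-full-ruzsa-model}: since $|A+A| \le Kk$, the set $A$ (viewed first in $\Z$ via any lift, or more cleanly: any set $A \subset \Z/N\Z$ with small doubling has small Freiman dimension by Proposition~\ref{prop:freiman}, hence by Lemma~\ref{lem:basicG05}$(a)$ a model in $\R^r$ with $r \le K-1+O_K(1) \le K^{O(1)}$) has a Freiman isomorphic copy inside an interval $[m]$ with $m \le \exp(K^C) k$. The point of this step is purely to control the "diameter after dilation'': a set with small doubling cannot be dilated to spread out too much, because its Freiman-isomorphic structure forces it into a short progression.

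Next I would count dilates directly in $\Z/N\Z$. For $\lambda \in (\Z/N\Z)^\times$, say $\lambda$ is \emph{good} if $\lambda A$ lies in some interval of length $< N/4$; equivalently, $\lambda A$ avoids one of the $q := 4$ "gaps'' $J_t = \{ x : x/N \in (t/4 - 1/8, t/4+1/8)\}$... more robustly, $\lambda$ is good iff $\lambda A$ omits an arc of length $\ge 3N/4$. The standard way to get many such $\lambda$ is a second-moment / averaging argument: partition the unit circle into $O(1)$ arcs, and show that for a positive proportion of residues $r$ (mod a suitable small modulus, or a positive-measure set of "slopes'' $\lambda/N \in \R/\Z$), the dilate $\lambda A$ is concentrated in a short arc. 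Concretely, I would fix the model $A \cong B \subset [m]$ with $m = \exp(K^C) k$ via a Freiman isomorphism realized (after clearing denominators) by an actual affine-linear relation; pulling back, $A$ itself — up to a dilation $\lambda_0$ and translation — lies in an interval of length $\le \exp(K^C) k$ in $\Z/N\Z$, provided $N > 2\exp(K^C)k$ so that no wraparound occurs (if $N$ is that small, the statement $N/k \le \exp(K^C)$ is trivial since we only need one dilate, namely the identity, which always works when $A$ — hmm, not always). Given one dilate $\lambda_0 A$ inside an interval $I$ of length $\ell \le \exp(K^C) k$, every dilate $\mu \lambda_0 A$ with $\mu$ an integer in $[1, N/(4\ell)]$ lies inside the interval $\mu \cdot I$ of length $\mu \ell < N/4$ (no wraparound), and distinct such $\mu$ give distinct dilates. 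That produces $\ge N/(4\ell) \ge \exp(-K^C) N/k$ good dilates, as required.

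The main obstacle I anticipate is the wraparound / bootstrap issue: to even get the \emph{first} concentrated dilate $\lambda_0 A$ inside $\Z/N\Z$, I need to transfer the $[m]$-model back into $\Z/N\Z$ without the image spilling around the cycle, and the honest way to do that is to ensure $N$ exceeds $2m$; but the lemma is stated for all $N$, so I must separately handle small $N$. When $N \le \exp(K^C) k$ the target bound $\exp(-K^C) N/k \le 1$, so it suffices to exhibit a single dilate contained in an interval of length $< N/4$ — and here I would argue that $A$ itself, or some fixed dilate, cannot meet every short arc: since $|A+A| \le Kk$ and intervals of small doubling are "one-dimensional-ish'', one shows $A$ (after dilation) fits in an interval of length $< N/4$ whenever $k$ is small compared to $N$; if $k$ is \emph{not} small compared to $N$, i.e. $k \ge N/(4Kk)$-ish fails... actually when $k \gtrsim N/\exp(K^C)$ we may need to be careful, but then $N/k \le \exp(K^C)$ and the conclusion "at least $\exp(-K^C)N/k$ dilates'' only demands a constant number, indeed possibly less than one, so it is vacuous once the implied constant is chosen appropriately. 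I would set up the constants so that the claimed lower bound is $\le 1$ in all the degenerate regimes, reducing everything to the clean regime $N \ge 2\exp(K^C) k$ where the dilation-and-multiply argument above applies cleanly.
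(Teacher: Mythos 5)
There is a genuine gap at the crucial step. You try to obtain a first concentrated dilate $\lambda_0 A$ by ``pulling back'' the Freiman isomorphism $A \cong B \subset [m]$ supplied by Proposition~\ref{weak-full-ruzsa-model} (or by Lemma~\ref{lem:basicG05}(a)), asserting that this isomorphism is ``realized by an actual affine-linear relation''. It is not: a Freiman isomorphism is merely a bijection preserving additive quadruples, and for a set of Freiman dimension $r > 1$ it is determined by the images of $r+1$ points which can be chosen with considerable freedom; there is no reason it should be given by $x \mapsto \lambda_0 x + \mu$ on $\Z/N\Z$. The assertion that \emph{some} nonzero dilate of $A$ lands in a short arc of $\Z/N\Z$ is exactly the rectification theorem of Bilu, Lev and Ruzsa (\cite[Theorem~2.1]{blr}), which the paper explicitly identifies as the ``one dilate'' version of this lemma — so your derivation assumes a form of what is to be proved. (Your fallback route via Freiman dimension only places a model of $A$ in $\R^r$, which is again not a short interval.)

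The paper's proof instead works directly with the generalised progression $P = \{\ell_1 x_1 + \cdots + \ell_d x_d : 0 \le \ell_i < L_i\}$ containing $A$ (Freiman's theorem with Chang's bounds, after lifting to $\Z$ and projecting back), observes that any $\lambda$ with $\Vert \lambda x_i / N \Vert \le 1/(8dL_i)$ for all $i$ forces $\lambda A$ into an arc of length $< N/4$, and then finds $\ge \vol(B) N \ge \exp(-K^{C}) N / k$ such $\lambda$ by a pigeonhole argument on $(\R/\Z)^d$ (a translate of the box $B = \prod_i [0, 1/8dL_i]$ contains many points of the orbit $(t x_1/N, \ldots, t x_d/N)$, and one takes differences). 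This produces the concentration and the count of many $\lambda$ in one stroke. Your second step — amplifying one good dilate $\lambda_0$, with $\lambda_0 A$ in an interval of length $\ell \le \exp(K^C)k$, to the $\ge N/4\ell$ dilates $\mu \lambda_0$ for integers $1 \le \mu \le N/4\ell$ — is correct and would be a legitimate alternative to the paper's counting step, but only after the existence of such a $\lambda_0$ is established by an honest appeal to \cite{blr} or \cite{GRrect} in the dilation form (not the Freiman-isomorphism form). Note finally that the degenerate regimes you worry about are handled in the paper simply by permitting $\lambda = 0$, which is always an admissible dilate.
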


Note that in the lemma above we do not preclude $\lambda = 0$; however if $k > \exp(K^{-C})N$ this might be the only dilate with the required property, in which case the result has no content. Without the requirement that there be \emph{many} dilates $\lambda A$, this result follows from~\cite[Theorem 2.1]{blr}. The paper~\cite{blr} uses Freiman's theorem (stated in the introduction), but a more self-contained proof of~\cite[Theorem 2.1]{blr} was obtained by the first author and Ruzsa~\cite{GRrect}. Either of these arguments may be adapted to give the stronger statement we require here, but (given Freiman's theorem) a somewhat shorter account can be given by following the original approach of~\cite{blr}. We leave it as an exercise to the reader to adapt the proof in~\cite{GRrect}; see also~\cite[Lemma 5.45]{TV}.

\begin{proof}[Proof of Lemma~\ref{many-rect}]
We first apply Freiman's theorem with Chang's bounds~\cite{Chang} to conclude that $A$ is contained in a generalised progression
\[ 
P = \big\{ \ell_1 x_1 + \dots + \ell_d  x_d : 0 \leq \ell_i < L_i \big\}
\] 
where $x_1,\dots,x_d \in \Z/N\Z$, $d \leq K^C$ and $\prod_{j = 1}^d L_j \leq \exp(K^C) k$. Strictly speaking, Freiman's theorem concerns subsets of $\Z$, and not of $\Z/N\Z$. However (see~\cite[proof of Theorem 2.1]{blr}), we may lift $A$ to a subset $\overline{A} \subset \Z$ of cardinality $k$ and with doubling constant at most $2K$, apply Freiman's theorem to this, and then push the resulting progression back down to $\Z/N\Z$. 

Let $\Vert \cdot \Vert$ denote distance to the nearest integer. We claim that if $\lambda \in \Z/N\Z$ satisfies 
\begin{equation}\label{eq:lambdanearintegers}
\left\Vert \frac{\lambda x_i}{N} \right\Vert \, \leq \, \frac{1}{8d L_i} \qquad \mbox{ for each } i \in \{1,\dots, d\}, 
\end{equation}
then $\lambda A$ is contained in a subinterval of $\Z/N\Z$ of length less than $N/4$. Indeed, this follows easily from the triangle inequality and the fact that $A \subset P$, since for any such $\lambda$ and for any $a \in A$ we have $\Vert \lambda a/N \Vert \leq \frac{1}{8}$.  

It remains to show that many $\lambda \in \Z/N\Z$ satisfy~\eqref{eq:lambdanearintegers}. To see this, observe that, by an averaging argument (the pigeonhole principle), there exists a translate of the box $B = \prod_{i = 1}^d [0, 1/8dL_i] \subset (\R/\Z)^d$ containing at least $\vol(B) N$ points $(tx_1/N,\dots, t x_d/N) \in (\R/\Z)^d$ with $t \in \Z/N\Z$. Writing $T \subset  \Z/N\Z$ for the set of such $t$, it follows that every $\lambda \in T - T$ satisfies~\eqref{eq:lambdanearintegers}, and hence the number of such $\lambda$ is at least $\vol(B) N$. Since
\[ \vol(B) \, = \, (8d)^{-d} \frac{1}{L_1 \cdots L_d} \, \geq \, \frac{\exp\big( K^{-C'} \big)}{k},\] 
the claimed bound follows.
\end{proof}

The next lemma shows that only a bounded\footnote{Note that here, as throughout, the notation $f(x) \ll g(x)$ denotes the existence of an absolute constant $C$ such that $f(x) \le C g(x)$ for every $x$.} number of maps $\lambda A + \mu$ give the same set. 

\begin{lemma}\label{invariance-prop}
Suppose that $A \subset \Z/N\Z$ is a set of cardinality $k$ and that $|A + A| \leq Kk$. Then either $|A| \gg K^{-50}N$, or else there are $\ll K^{100}$ values of $\lambda \in (\Z/N\Z)^{\times}$ and $\mu \in \Z/N\Z$ such that $A = \lambda A + \mu$.
\end{lemma}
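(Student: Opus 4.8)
The plan is to study the group $G = \{(\lambda,\mu) \in (\Z/N\Z)^\times \times \Z/N\Z : \lambda A + \mu = A\}$ of affine symmetries of $A$, and to show that unless $A$ is very large, $G$ must be small. First I would observe that $G$ is indeed a group under composition $(\lambda_1,\mu_1)(\lambda_2,\mu_2) = (\lambda_1\lambda_2, \lambda_1\mu_2 + \mu_1)$. The key structural point is that the stabiliser of a fixed element of $A$ cuts $G$ down: if $a_0 \in A$ is fixed and $H = \{(\lambda,\mu) \in G : \lambda a_0 + \mu = a_0\}$, then each coset of $H$ in $G$ sends $a_0$ to a distinct element of $A$, so $[G:H] \le k$; meanwhile $H$ embeds into $(\Z/N\Z)^\times$ via $(\lambda,\mu) \mapsto \lambda$ (since $\mu$ is determined by $\lambda$ and the fixed point $a_0$), so $H$ is cyclic. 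Thus the whole question reduces to bounding $|H|$, i.e.\ bounding the number of \emph{dilations fixing a point} that preserve $A$.

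To bound $|H|$ I would use the small-doubling hypothesis together with Lemma~\ref{many-rect}. After translating so that the fixed point is $0$, $H$ corresponds to a subgroup $\Lambda \le (\Z/N\Z)^\times$ with $\lambda A = A$ for all $\lambda \in \Lambda$; in particular $A$ is a union of $\Lambda$-orbits. Suppose $|\Lambda| = t$ is large. Since $A$ is $\Lambda$-invariant and has small doubling, $A+A$ is also $\Lambda$-invariant. The natural move is: pick any $a \in A$ with $a \ne 0$; then the orbit $\Lambda a$ lies in $A$, so $\Lambda a + \Lambda a \subset A + A$, and the size of this orbit-sumset should be forced to be comparable to $t^2/\gcd$-type quantities unless $\Lambda$ itself has small doubling in $(\Z/N\Z)^\times$ — but a large subgroup of $(\Z/N\Z)^\times$ has perfect doubling, so $|\Lambda a + \Lambda a|$ would be of order $t$ at best from within a single orbit, and combining two orbits $\Lambda a_1, \Lambda a_2$ with $a_1/a_2 \notin \Lambda$ gives genuinely new sums. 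Playing this off against $|A+A| \le Kk$ and $|A| \ge t$ (since $A$ contains at least one full orbit, or is $\{0\}$) should yield $t \ll K^{O(1)}$ unless $k \gg K^{-O(1)} N$. An alternative, probably cleaner, route: apply Lemma~\ref{many-rect} to get $\exp(-K^C) N/k$ dilates $\nu A$ lying in an interval of length $< N/4$; if moreover $|\Lambda|$ is large then $\Lambda$-invariance of $A$ forces $\nu A = \nu \lambda A$ for all $\lambda \in \Lambda$, so all the dilates $\nu\lambda A$ ($\lambda \in \Lambda$) coincide and each lies in a short interval, which pins down $\Lambda$ to lie in a small set (those $\lambda$ with $\lambda I \cap I$ large for a fixed short interval $I$), giving $|\Lambda| \ll K^C$ directly, hence $|G| \le k|H| $ — no wait, we want $|G| \ll K^{100}$, so we instead get $|G| = [G:H]\,|H|$, and we must also bound $[G:H]$.

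That last point is the genuine obstacle: the bound $[G:H] \le k$ is useless for getting $|G| \ll K^{100}$. To fix it I would note that the index $[G:H]$ equals the size of the orbit $G \cdot a_0 \subseteq A$, and — crucially — this orbit is itself an arithmetic-progression-like or multiplicatively-structured set inside $A$; more precisely, $G$ acts on $A$ and the orbit of $a_0$ under the \emph{full} affine group has small doubling relative to its own size, so one can iterate the Freiman/dimension machinery. The cleanest fix: observe that $G$ acts on $\Z/N\Z$ and that $A$ being $G$-invariant with $|A+A| \le Kk$ forces, via Proposition~\ref{prop:freiman} or a direct argument, that $G$ has bounded order unless a positive-density-in-$k$ portion of the group acts trivially, which cannot happen for $N$ prime with $k \not\gg N$. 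Concretely I expect the argument to run: if $|G| > K^{100}$ then $G$ contains (by pigeonhole on the $\lambda$-component) either a large cyclic dilation subgroup — handled by the Lemma~\ref{many-rect} argument above — or a large translation subgroup $\{(1,\mu)\}$, but the latter would make $A$ a union of cosets of a nontrivial subgroup of $\Z/N\Z$, impossible since $N$ is prime unless $A = \Z/N\Z$. So the only real case is the dilation one, which the short-interval argument kills, and the main obstacle is just carefully combining the two component-subgroups and ruling out the translation part using primality of $N$.
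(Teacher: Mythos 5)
Your proposal has two genuine gaps, one structural and one analytic. The structural one is the $[G:H]$ obstacle you create and then cannot remove: by taking $H$ to be the stabiliser of a point $a_0 \in A$ you are left with $|G| \le k\,|H|$, which you correctly identify as useless, and your proposed repairs (``iterate the Freiman/dimension machinery'', ``via Proposition~\ref{prop:freiman} or a direct argument'') are not arguments. The resolution is much simpler and is the first line of the paper's proof: for a \emph{fixed} $\lambda$ at most one $\mu$ can satisfy $A = \lambda A + \mu$, since $\lambda A + \mu = \lambda A + \mu'$ with $\mu \neq \mu'$ would make $\lambda A$ invariant under $\langle \mu - \mu' \rangle = \Z/N\Z$ ($N$ prime), forcing $A$ to be empty or everything. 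So the projection $(\lambda,\mu) \mapsto \lambda$ is injective and one only counts $\lambda$'s; moreover each pair with $\lambda \neq 1$ fixes the point $\mu/(1-\lambda)$ of $\Z/N\Z$ (not necessarily a point of $A$ -- this is where your choice of $a_0 \in A$ led you astray), so after translating by $\mu/(\lambda - 1)$ the set becomes invariant under multiplication by $\langle \lambda \rangle$.

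The analytic gap is the key one: you must show that a multiplicative subgroup $H \le (\Z/N\Z)^{\times}$ with $HA' = A'$ and $|A'+A'| \le K|A'|$ has $|H| \ll K^{O(1)}$ unless $|A'| \gg K^{-O(1)}N$. This is precisely an asymmetric sum--product statement, and the paper invokes Bourgain's estimate \cite{bourgain-multilinear} for it; it is the one deep input in the lemma. Your first route (``$|\Lambda a + \Lambda a|$ should be forced to be large'') merely restates this sum--product problem. Your second route via Lemma~\ref{many-rect} fails outright: from $\nu A' \subset I$ with $|I| < N/4$ and $\Lambda$-invariance you learn only that the single set $\nu A'$, hence each coset $\Lambda b$ inside it, lies in $I$ -- and a coset of a multiplicative subgroup of size $K^{O(1)}$ (or of any size, absent an equidistribution input of Bourgain--Glibichuk--Konyagin type) can perfectly well lie in an interval of length $N/4$. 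Likewise the set of $\lambda$ with $|\lambda I \cap I| \ge |I|/2$ has size up to $2|I| \approx N/2$, not $K^{O(1)}$, since $\sum_{\lambda} |\lambda I \cap I| \approx |I|^2$. Finally, even granting $|H| \ll K^{50}$ you still need the closing count: each admissible $\lambda$ has multiplicative order $m \ll K^{50}$, and a field has at most $m$ roots of $x^m = 1$, whence $\sum_{m \ll K^{50}} m \ll K^{100}$.
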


\begin{proof}
First note that for fixed $\lambda$ at most one value of $\mu$ is permissible. Indeed if $\lambda A + \mu = \lambda A + \mu'$ then $\lambda A$ is invariant under the additive group $\langle \mu - \mu' \rangle$, which is all of $\Z/N\Z$ since $N$ is prime. Thus the task reduces to counting the number of possible $\lambda$. 

Suppose then that $\lambda \neq 1$, and that $A = \lambda A + \mu$ for some $\mu$. Then writing $A' = A + \frac{\mu}{\lambda - 1}$ we have $A' = \lambda A'$, and hence $A' = HA'$ where $H = \langle \lambda \rangle \leq (\Z/N\Z)^{\times}$ is the subgroup generated by $\lambda$. Applying the asymmetric sum-product estimate of Bourgain~\cite[Theorem 6]{bourgain-multilinear} (see also the short and quantitatively crisp article of Garaev~\cite{garaev-quant}) we have either $|H| \ll K^{50}$ or $|A| \gg K^{-50}N$. If the second inequality holds then we are done, so let us assume that $|H| \ll K^{50}$. Then $\lambda^m = 1$ for some $m \ll K^{50}$. For each $m$, the number of such $\lambda$ is no more than $m$, and hence the total possible number of $\lambda$ is, crudely, bounded by $\sum_{m \ll K^{50}} m  \ll K^{100}$.  This concludes the proof.
\end{proof}

In the proof of Theorem~\ref{thm:counting} we shall need one more fact, proved by Schoen~\cite{Schoen}, which we shall use to deduce the bounds for sets with small restricted sumset. We remark that the proof uses Roth's theorem on 3-term progressions.

\begin{lemma}\label{lem:restricted}
Let $A$ be a subset of $\Z$ or $\Z/N\Z$ with $|A| = k$. Then
$$|A \plus A| \, = \, \big( 1 + o(1) \big) |A + A|$$
as $k \to \infty$. 
\end{lemma}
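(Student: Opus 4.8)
The plan is to isolate the elements causing the discrepancy between $A+A$ and $A\plus A$ and to show they form a progression-free set, to which Roth's theorem then applies after passing to a dense model. Set
\[ B \,:=\, \big\{\, a \in A \,:\, 2a \notin A \plus A \,\big\}. \]
Every element of $(A+A)\setminus(A\plus A)$ is of the form $2a$ with $a\in A$ (its only representations being $a+a$), and conversely $2a \in (A+A)\setminus(A\plus A)$ whenever $a\in B$; hence $(A+A)\setminus(A\plus A)$ is exactly the image of $B$ under doubling, so
\[ 0 \,\le\, |A+A| - |A\plus A| \,=\, |2B| \,\le\, |B| \,\le\, |A| \,=\, k \]
(doubling being at most two-to-one), and since trivially $|A+A|\ge k$, it suffices to prove $|B| = o(k)$, uniformly over all $A$ of size $k$. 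The structural point is that $B$ contains no nontrivial three-term arithmetic progression: if $x,y,z\in B$ with $x+z=2y$ and $x\neq z$, then $y\in B$, yet $2y=x+z$ realises $2y$ as a sum of the two distinct elements $x,z\in A$, contradicting $y\in B$.

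Now fix $\eps>0$ and let $K=\lceil 1/\eps\rceil$. If $|A+A|>K|A|$ then at once $|A+A|-|A\plus A|\le k<|A+A|/K\le\eps|A+A|$. Otherwise $|A+A|\le K|A|$, and I would invoke Ruzsa's modelling lemma in the form of Proposition~\ref{weak-full-ruzsa-model}: if $A\subset\Z$ this applies directly, and if $A\subset\Z/N\Z$ one first dilates $A$ into a subinterval of length $<N/4$ (Lemma~\ref{many-rect}) and lifts that interval to $\Z$, which --- there being no wraparound in the relevant sums --- yields a Freiman-isomorphic subset of $\Z$ with the same cardinality and doubling. Proposition~\ref{weak-full-ruzsa-model} together with Bertrand's postulate then furnishes a prime $p$ with $k\le p\le C_K k$, where $C_K=4(32K)^{12K^2}$ depends only on $\eps$, and a Freiman isomorphism $f$ of $A$ onto a set $A'\subset\Z/p\Z$. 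Since $f$ is a bijection preserving all additive quadruples, $a\in B$ if and only if $f(a)\in A'$ and $2f(a)\notin A'\plus A'$, so $B':=f(B)$ has $|B'|=|B|$, and the same argument as above (now inside $\Z/p\Z$, where $p$ is odd, so that a progression is nontrivial precisely when its endpoints differ) shows $B'$ has no nontrivial three-term progression. Viewing $B'$ inside $\{0,1,\dots,p-1\}\subset\Z$, where no wraparound can occur, $B'$ is then progression-free over $\Z$, so Roth's theorem gives $|B|=|B'|\le r_3(p)$ with $r_3(p)=o(p)$. As $C_K$ is fixed once $\eps$ is, we get $r_3(p)<\eps p/C_K\le\eps k$ as soon as $p$ --- hence $k$ --- exceeds a threshold depending only on $\eps$; thus $|A+A|-|A\plus A|\le|B|<\eps k\le\eps|A+A|$. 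Combining the two cases, $|A+A|-|A\plus A|\le\eps|A+A|$ once $k\ge k_0(\eps)$, which is the assertion.

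The only substantive input is Roth's theorem; everything else is bookkeeping, and the step requiring the most care is the reduction to bounded doubling followed by the dense model --- in particular, ensuring the model size $C_Kk$ is controlled by a constant depending only on $\eps$, so that the purely qualitative bound $r_3(p)=o(p)$ is enough, and, in the cyclic case, carrying out the lift from $\Z/N\Z$ to $\Z$ correctly. One should also record the elementary facts that $(A+A)\setminus(A\plus A)$ is precisely the doubling image of $B$ and that the property defining $B$ is transported by a Freiman $2$-isomorphism; both are routine but must be stated.
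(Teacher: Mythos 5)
The paper does not actually prove this lemma: it cites Schoen and merely records that the proof uses Roth's theorem. Your argument is, in essence, that proof. For $A \subset \Z$ your write-up is complete and correct: the identity $(A+A)\setminus(A\plus A) = \{2b : b \in B\}$ with $B = \{a \in A : 2a \notin A \plus A\}$, the observation that $B$ contains no nontrivial three-term progression, the dichotomy according to whether $|A+A| > K|A|$ with $K = \lceil 1/\eps \rceil$, the transport of the defining property of $B$ under a Freiman $2$-isomorphism, and the passage from $\Z/p\Z$ back to $\{0,\dots,p-1\}$ before applying Roth are all sound, and you handle the uniformity in $A$ correctly.

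The cyclic case, however, has a genuine gap as written. You reduce $A \subset \Z/N\Z$ to the integer case by invoking Lemma~\ref{many-rect} to place a dilate $\lambda A$ inside an interval of length less than $N/4$. But that lemma only guarantees $\exp(-K^C)N/k$ dilates, so when $k \gg_K N$ (for instance $A = \Z/N\Z$ with $K=1$) the only dilate it supplies may be $\lambda = 0$, and rectification is simply impossible; moreover, for composite $N$ a nonzero $\lambda$ need not be invertible, so $\lambda A$ need not be Freiman isomorphic to $A$. Both problems disappear if you instead lift $A$ to $\overline{A} \subset \{0,\dots,N-1\} \subset \Z$ by choosing representatives: the projection $\pi$ is injective on $\overline{A}$, one has $|\overline{A}+\overline{A}| \le 2|A+A|$ (the doubling constant at most doubles, which is harmless since $K$ is in any case arbitrary in your dichotomy), and $2a \in \overline{A}\plus\overline{A}$ implies $2\pi(a) \in A\plus A$, whence $|B| \le |\overline{B}|$ and the integer case applies verbatim. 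Alternatively, in the dense regime $k \gg_K N$ you can apply Roth directly to $B$ lifted to $\{0,\dots,N-1\}$, since there $N \ll_K k$. Either patch is routine, but one of them is needed.
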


\begin{proof}[Proof of Theorem~\ref{thm:counting}] 
We turn now to the proof of our main result concerning counting sets with small sumset. We begin by looking at the case of sets $A \subset [N]$, that is to say part (i) of the theorem. By Proposition~\ref{weak-full-ruzsa-model}, it follows that any $A \subset [N]$ with $|A| = k$ and $|A + A| \leq Kk$ is Freiman-isomorphic to a subset of $\Z/p\Z$ with $p = O_K(k)$. Hence, by Proposition~\ref{mod:p:count}, the number of isomorphism classes of such $A$ is $2^{o_K(k)} \binom{Kk/2}{k}$ as $k \to \infty$.

Let $\overline{A}$ be a representative of such an isomorphism class, and suppose $r_{\Q}(\overline{A}) = r$. By Lemma~\ref{lem:basicG05}$(b)$, the number of Freiman homomorphisms $\pi : \overline{A} \rightarrow [N]$ is at most $N^{r+1}$. Since each set $A$ of the type we are counting must be the image of such a homomorphism, we get
\[ 2^{o_K(k)} \binom{Kk/2}{k} N^{r+1}\] such sets in total.

Now, we showed, in Proposition~\ref{prop:freiman} above, that $r \leq K  - 1 + \frac{1}{|A|} \binom{r+1}{2}$. Applying this with the trivial bound $r \leq |A|$, we deduce that $r \leq 2K - 1$. Feeding this back in, we infer that $r \leq K - 1 + \frac{2K^2}{|A|}$. Hence, if $|A| = k \ge 2K^2 / \delta$, then $r \leq K - 1 + \delta$. Since $r$ is an integer, we in fact have $r +1 \leq \lfloor K + \delta\rfloor$. The claimed bound now follows.

We next turn to sets $A \subset \Z/N\Z$, i.e. part (ii) of the theorem. Suppose that $|A| = k$ and that $|A + A| \leq Kk$. 
By Lemma~\ref{many-rect} there are at least $c_K N/k$ dilates $\lambda A$ that are contained in a subinterval of $\Z/N\Z$ of length $N/4$, where $c_K$ depends only on $K$ (and in fact can be taken to be $\exp(K^{-O(1)})$.

If $k \geq \frac{1}{2} c_K N$ then Theorem~\ref{thm:counting} follows directly from Proposition~\ref{mod:p:count}. Suppose, then, that $k < \frac{1}{2} c_K N$. Then at least $c_K N/2k$ of the dilates $\lambda A$ supplied by the application of Lemma~\ref{many-rect} have $\lambda \neq 0$. For each of these dilates there are at least $N/4$ translates $\lambda A + \mu$ that are contained in the interval $\{1,\dots, N/2\}$. Assuming (as we may) that $c_K < K^{-50}$, Lemma~\ref{invariance-prop} tells us that there are $\gg c'_K N^2/k$ distinct such sets. Each of these may be lifted to a subset of $[N]$ that is Freiman isomorphic to $A$, and hence have the same cardinality and doubling as $A$. 

Thus every set $A \subset \Z/N\Z$ with $|A| = k$ and $|A+A| \le K|A|$ arises in at least $c'_K \frac{N^2}{k}$ ways by taking a set $\overline{A} \subset [N]$ with $|\overline{A}| = k$ and $|\overline{A} + \overline{A}|  \leq Kk$, projecting it modulo $N$ and then applying the inverse of an affine map $x \mapsto \lambda x + \mu$ with $\lambda \neq 0$.  There being fewer than $N^2$ such affine maps, it follows that the number of $A \subset \Z/N\Z$ with $|A| = k$ and $|A+A| \leq K|A|$ is at most $k/c'_K$ times the number of subsets of $[N]$ with the same properties, and so the bound we seek follows from part~(i).

Finally, we address the claim that all of our bounds hold equally well when we are counting sets $A \subset [N]$ (or $A \subset \Z/N\Z$) with $|A| = k$ whose \emph{restricted} sumset $A \hat{+} A$ has size at most $K|A|$. Indeed, by Lemma~\ref{lem:restricted}, it follows that in fact $|A + A| \leq  K'|A|$ with $K' = K + o(1)$. This error of $o(1)$ may be absorbed into the error term in our main theorem, and so the same bounds hold for sets with small restricted sumset, as claimed. 
\end{proof}

\section{Further consequences}\label{sec4-furtherconsequences}

In this section we begin by proving Theorem~\ref{sumset-compliment}. Let us begin by recalling the statement.

\begin{sumset-compliment-repeat}
Let $A \subset \N$ be chosen by selecting each positive integer with probability $1/2$, all independently. Then $\Pr\big( | \N \setminus (A + A) | = s \big) = \big( 2 + o(1) \big)^{-s/2}$ as $s \rightarrow \infty$. 
\end{sumset-compliment-repeat}

This is, in actual fact, a relatively easy deduction from Theorem~\ref{thm:counting}. Suppose that $A \subset \N$ is selected at random. Write $X := A \cap [10s]$, and note that, with very high probability,
\[ 
\N \setminus (A + A) = \{1,\dots, 10s\} \setminus (X + X).
\]
Indeed the only way this could fail to be the case is if $A+A$ omits some integer $n > 10s$. If $n \notin A + A$ then $A$ contains at most one element from each of the pairs $\{1,n-1\}, \{2, n-2\}, \dots, \{\lfloor n/2\rfloor, n - \lfloor n/2 \rfloor\}$. These pairs being disjoint, the chance of this happening is bounded by $(3/4)^{\lfloor n/2\rfloor}$. Hence
\[ \P\big(\mbox{$A + A$ misses some $n > 10s$} \big) \leq \sum_{n > 10s} \left( \frac{3}{4} \right)^{\lfloor n/2\rfloor} < \, 2^{-s},\] a quantity we can ignore for the rest of the argument. 

The set $X$ is, of course, a (uniformly chosen) random subset of $[10s]$. We are interested, then, in counting the number of such subsets for which 
\[ 
\big| \big\{ 1,\dots, 10s \big\} \setminus \big( X + X \big) \big| = s.
\] 
We shall in fact count the slightly larger family of subsets for which $|X + X| \le 19s$. The aim is to show that this number is at most $2^{10s - s/2 + o(s)}$. 

\begin{proof}[Proof of Theorem~\ref{sumset-compliment}]
By Theorem~\ref{thm:counting}, the number of sets $X \subset \{1,\ldots,10s\}$ for which $|X + X| \le 19s$ is bounded above by
$$s {10s \choose s/10} + 2^{o(s)} \sum_{k = s/10}^{10s} \binom{19s/2}{k} \, \le \, 2^{19s/2 + o(s)},$$
as required. By the observations above, it follows that $a(s) := \Pr\big( | \N \setminus (A + A) | = s \big) \le \big( 2 + o(1) \big)^{-s/2}$ as $s \rightarrow \infty$. The corresponding lower bound is trivial since, considering only those sets with $1 \not\in A$, we have $a(s) \ge \frac{1}{2}a(s-2)$ for every integer $s \geq 3$. 
\end{proof}

\vspace{11pt}

We believe that the bound in Theorem~\ref{sumset-compliment} can be improved, potentially to $O(2^{-s/2})$. We plan to return to this issue in a future paper.

\section{The dimension of a subset of \texorpdfstring{$\Z/N\Z$}{Z/NZ}}\label{sec5-dimensionbound}

The aim of this section is to give an upper bound for the number of $k$-subsets $A \subset \Z/N\Z$ with $|A+A| \le m$ when $N$ is prime and $m = \Omega(k^2)$. This is needed in the proof of Theorem~\ref{maincliquetheorem}. The technique used in~\cite{G05} proceeded by unwrapping $\Z/N\Z$ to lift to $\Z$, which may cause the doubling constant to double. It is somewhat ineffectual in most of the range $m = \Omega(k^2)$ and completely ineffectual when $m > k^2/4$. We shall use a different approach, which yields the following improved bound.

\begin{prop}\label{prop:count:toprange}
Let $\eps > 0$, let $N$ be a large prime, and suppose that $k \leq 100 \log N$. If $m \geq \eps k^2$, then there are at most $N^{2m/k + o(k)}$ sets $A \subset \Z/N\Z$ with $|A| = k$ and $|A + A| \leq m$. 
\end{prop}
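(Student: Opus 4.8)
The plan is to bound the number of such $A$ by first counting Freiman isomorphism classes, and then counting the Freiman homomorphisms into $\Z/N\Z$, just as in the proof of Theorem~\ref{thm:counting}. Since $k \le 100\log N$, the set $A$ is small relative to $N$, so there is a lot of room; the key point is that when $m = \Omega(k^2)$ the Freiman dimension $r = r_{\Q}(A)$ can be of order $k$, so the naive bound $N^{r+1}$ on the number of homomorphisms (Lemma~\ref{lem:basicG05}$(b)$) could be as large as $N^{\Theta(k)}$, which is far worse than the claimed $N^{2m/k + o(k)}$. So the heart of the matter is to show that, even though the dimension may be large, a set of dimension $r$ with $|A+A| \le m$ must satisfy $r + 1 \le 2m/k + o(k)$, or more precisely to directly control the number of embeddings. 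First I would use Proposition~\ref{prop:freiman}: from $|A+A| \le m$ and $|A| = k$ we get $r \le m/k - 1 + \binom{r+1}{2}/k$, and since trivially $r \le k$ this gives $r = O(m/k + k)$; but we need the sharper relation $r + 1 \lesssim 2m/k$, which should come from a more careful application of Freiman's Lemma to the dense model together with the fact that $k$ is at most logarithmic in $N$.

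The cleaner route, which I would pursue, is to replace the $N^{r+1}$ bound by a genuinely isoperimetric bound. By Lemma~\ref{lem:basicG05}$(a)$ we may assume $A$ is realised inside $\Z^r$, spanning affinely. A Freiman homomorphism $A \to \Z/N\Z$ is determined by the images of $r+1$ affinely independent points, but the images are heavily constrained: the image set $A'$ again has $|A' + A'| \le m$. The idea is to fix the Freiman isomorphism class, pick a spanning simplex $a_0, \dots, a_r$ in $A$, and observe that once the images of $a_0, \dots, a_r$ are chosen the rest of $A'$ is determined; so the count of embeddings is at most the number of ways of placing a generalised-progression-like structure of ``volume'' comparable to $m$ inside $\Z/N\Z$. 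Here is where the isoperimetric inequality on $\Z^d$ (promised in the section heading, and presumably the ``Isoperimetric Inequality'' theorem referenced in the preamble) enters: a set $A' \subset \Z/N\Z$ of size $k$ with $|A' + A'| \le m$ can be covered by (or sits inside a Freiman image corresponding to) a box whose side lengths multiply to roughly $m$; choosing such a box costs about $N$ for its basepoint and a bounded amount for its ``shape'', and then choosing the $k$ points inside costs at most $\binom{m}{k} = 2^{o(k)} \cdot (\text{something})$, but crucially the dominant factor is the $N^{2m/k}$ coming from... — wait, rather: we think of $A'$ as lying on an arithmetic-progression-like skeleton, and the correct bookkeeping is that specifying the embedding amounts to specifying $r+1$ points subject to the $|A'+A'| \le m$ constraint, which by an isoperimetric/compression argument forces $r + 1 \le 2m/k + o(k)$.

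So concretely the key steps, in order, are: (1) reduce to counting (isomorphism class, embedding) pairs; (2) bound the number of isomorphism classes — here we can afford to be very crude, since $k \le 100\log N$ means anything of size $2^{O(k)} = N^{O(1)}$ and even $k^{O(k)} = N^{o(k)}$ is acceptable, so this factor is harmless and we may even use the trivial bound $\binom{2k^2}{k}$-type estimate on the number of classes of sets with $|A+A| \le m \le k^2$; (3) establish the dimension bound $r_{\Q}(A) + 1 \le 2m/k + o(k)$ for any $A$ with $|A| = k$, $|A+A| \le m$, $m \ge \eps k^2$ — this is where the isoperimetric inequality on $\Z^d$ does the work, improving on the quadratic-in-$r$ Freiman's-Lemma bound which is too weak in this regime; and (4) apply Lemma~\ref{lem:basicG05}$(b)$ (adapted to $\Z/N\Z$ via the dense-model / lifting machinery already developed, or directly) to get $N^{r+1} = N^{2m/k + o(k)}$ homomorphisms, and multiply by the $N^{o(k)}$ isomorphism classes.

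The main obstacle will be step (3): proving that $r + 1 \le 2m/k + o(k)$ rather than merely $r + 1 = O(m/k + k)$. Freiman's Lemma alone gives $m \ge (r+1)k - \binom{r+1}{2}$, i.e. $r + 1 \le k - \sqrt{k^2 - 2m}$ roughly, which when $m$ is close to $k^2/2$ degrades badly and is useless for $m > k^2/2$ — and the proposition is supposed to cover all $m = \Omega(k^2)$, including $m$ a large multiple of $k^2$, where in fact $r$ is genuinely of order $k$ and the bound $2m/k$ is much larger than $k$, so there the content is entirely in bounding the number of \emph{embeddings} of a fixed high-dimensional class, not in bounding $r$. Thus I expect the real work is an isoperimetric estimate saying: the number of sets $A' \subset \Z/N\Z$ with $|A'| = k$, $|A' + A'| \le m$, that are Freiman isomorphic to a \emph{fixed} $r$-dimensional $A$, is at most $N^{2m/k + o(k)}$, uniformly in $r$. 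This presumably follows by realising $A'$ inside a box in $\Z^r$ of volume $O_\eps(m)$ (via the discrete isoperimetric inequality, which controls $|A' + A'|$ from below by a power of the volume of the convex hull), so that the embedding is pinned down by $O(\log m) = o(k)$ integer parameters each of size at most $N$ — giving $N^{o(k)}$ — together with the placement of the box, $N^{O(1)}$, and the choice of which lattice points of the box lie in $A'$, which is at most $\binom{\vol}{k} \le \binom{Cm}{k}$; reconciling $\binom{Cm}{k}$ with the target $N^{2m/k + o(k)}$ when $m \ge \eps k^2$ is the final calculation, and it works precisely because $\log\binom{Cm}{k} = O(k \log(m/k)) = O(k\log k) = o(k \log N) = o(k)\cdot\log N \le N^{o(k)}$ only if we are careful — more likely the $N^{2m/k}$ genuinely comes from a different part of the count, namely choosing $2m/k$ ``generators'' of the ambient structure each costing $N$, and I would need to set up the isoperimetric bound so that this is what falls out. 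I would spend most of the effort getting that bookkeeping right.
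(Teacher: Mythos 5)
Your high-level instincts are right --- the exponent should be a ``dimension'' of size about $2m/k$, and the vertex-isoperimetric inequality on $\Z^d$ is the tool that produces the factor $\frac{1}{2}dk$ --- but the route you propose, through Freiman dimension and homomorphism counting, has a genuine gap at its central step, and that gap is exactly what this proposition exists to circumvent. Lemma~\ref{lem:basicG05}$(b)$ counts Freiman homomorphisms into $[N]$, not into $\Z/N\Z$. To invoke it you must unwrap $A \subset \Z/N\Z$ to a subset of $[N]$, and this lift can double the doubling constant (an element of $A+A$ has two preimages in $\{2,\dots,2N\}$). Freiman's Lemma applied to the lifted set then only gives $(r+1)k - \binom{r+1}{2} \leq 2m$, which yields $r \approx 4m/k$ rather than $2m/k$, and becomes completely vacuous once $m > k(k+1)/4$ (every $r \le k-1$ satisfies it). This is precisely Proposition~\ref{prop:middle:ranges} with the constraint~\eqref{eq:r:bound}, i.e.\ the method of \cite{G05}, which the paper explicitly sets aside at the start of Section~\ref{sec5-dimensionbound} as ``completely ineffectual when $m > k^2/4$''. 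Two smaller points: since $m \leq \binom{k+1}{2}$ always, the regime ``$2m/k$ much larger than $k$'' that you worry about does not occur (and for the \emph{intrinsic} dimension $r_{\Q}(A)$, Freiman's Lemma actually gives $r+1 \le 2m/k$ cleanly, because $(r+1)(k - r/2) \le m$ and $k - r/2 > k/2$ --- the problem is solely that no $N^{r_\Q+1}$ bound is available for embeddings into $\Z/N\Z$); and your step (2) is not obviously harmless either, as the paper has no $N^{o(k)}$ bound on the number of Freiman isomorphism classes when the doubling is $\Theta(k)$ --- the dense-model machinery of Proposition~\ref{weak-full-ruzsa-model} requires bounded doubling.

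The paper's actual proof avoids isomorphism classes and Freiman dimension altogether. It takes a \emph{maximal $(\log N)^{7/8}$-dissociated} subset $X = \{x_1,\dots,x_d\} \subset A$ and proves two things. First (Lemma~\ref{FDLmodN}), $d \leq (1+o(1))\,2m/k$: since dissociativity holds only up to height $(\log N)^{7/8}$ one cannot embed all of $A$ into $\Z^d$ at once, so $A$ is first split by a graph-theoretic covering argument (Lemma~\ref{graph-lemma}) into pieces $A_i$ of small diameter in the graph with edge-differences $\pm x_i \pm x_j$, whose sumsets $A_i + X$ are pairwise disjoint; each piece \emph{is} Freiman isomorphic to a subset of $\Z^d$, where the Wang--Wang isoperimetric inequality gives $|A_i + X| \geq (\frac{1}{2} - o(1))\, d\, |A_i|$, and summing over $i$ yields $m \geq (\frac{1}{2}-o(1))\, d k$. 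Second, by maximality of $X$ every $a \in A$ satisfies a relation $\lambda a + \sum_i \lambda_i x_i = 0$ of height at most $(\log N)^{7/8}$ with $\lambda \neq 0$, so once $X$ is chosen ($N^d$ ways) each remaining element of $A$ lies in a set of size $N^{o(1)}$ (Lemma~\ref{ell-1-bound}), giving $N^{d+o(k)}$ directly. In short, the missing ingredient in your outline is a substitute for ``dimension'' that is simultaneously bounded by $2m/k$ \emph{and} controls the number of realisations inside $\Z/N\Z$; the size of a maximal dissociated subset is that substitute, and you would need to supply it (or an equivalent) to make your plan work.
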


Note that if $A$ consists of an arbitrary set of $t = \lfloor cm/k \rfloor$ elements, together with an arithmetic progression of length $k-t$, then $|A+A| \le {t \choose 2} + t(k-t) + 2(k - t) \le m$ if $c \approx \big( 1 - \sqrt{1 - 2\alpha} \big) / \alpha \to 2$ as $\alpha = m / k^2 \to 1/2$, so the bound in the proposition is not far from optimal when $m$ is close to ${k \choose 2}$. More importantly for our purposes, it is sufficient to deduce Theorem~\ref{maincliquetheorem}. 

The idea behind the proof of Proposition~\ref{prop:count:toprange} is that our set $A$ cannot contain more than about $2m/k$ ``quasi-random" elements. The crucial definition, which quantifies the term quasi-random, is as follows.

\begin{defn}
A set $\{x_1,\dots,x_d\} \subset \Z/N\Z$ is \emph{$M$-dissociated} if $\lambda_1 x_1 + \dots + \lambda_d x_d \neq 0$
for every collection $\lambda_1,\ldots,\lambda_d \in \Z$ of integers, not all zero, with $\sum_i |\lambda_i| \leq M$. 
\end{defn}

The key step in our argument is the following statement.

\begin{lemma}\label{FDLmodN}
Let $\eps > 0$, let $N$ be a large prime, and suppose that $k$ and $m$ are integers with $k \leq 100 \log N$ and $m \ge \eps k^2$. Suppose that $A \subset \Z/N\Z$ is a set of size $k$ with $|A + A| \le m$, and that $\{x_1,\dots,x_d\} \subset A$ is $(\log N)^{7/8}$-dissociated. Then $d \, \leq \, \big( 1 + o_\eps(1) \big) \frac{2m}{k}$
as $k \to \infty$.
\end{lemma}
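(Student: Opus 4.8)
The plan is to transfer the $M$-dissociated set $\{x_1,\dots,x_d\}$ into $\Z$ (or $\R^d$) without distortion, so that the Freiman-dimension machinery already developed applies. The point of $M$-dissociativity with $M = (\log N)^{7/8}$ is precisely that any additive relation among the $x_i$ that could possibly be witnessed by sums of the bounded-size set $A$ — whose elements, living in $A+A+\cdots$, involve coefficients of total size $O(k) = O(\log N)$ — cannot "wrap around" modulo $N$, because $(\log N)^{7/8}$ dominates the relevant coefficient sizes once $k \le 100\log N$. So first I would make this precise: show that the restriction of the quotient map $\Z \to \Z/N\Z$ to a suitable bounded-coefficient region is a Freiman isomorphism on the relevant configuration, and in particular that $A$ together with the span (over small integer combinations) of the $x_i$ behaves exactly as it would in $\Z$. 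Concretely, I would lift $x_1,\dots,x_d$ to integers and observe that, since they are $(\log N)^{7/8}$-dissociated in $\Z/N\Z$ and $k \le 100\log N \ll (\log N)^{7/8} \cdot (\log N)^{1/8}$, they remain "genuinely independent" as far as any sum of $O(k)$ of them is concerned — i.e.\ the natural map sending $x_i \mapsto e_i \in \R^d$ extends to a Freiman isomorphism on $A' := $ (the relevant part of $A$), placing a Freiman-isomorphic copy of $A$ into $\R^d$ in which $x_1,\dots,x_d$ map to $d$ affinely independent points.

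Granting that, the lemma reduces to a statement in $\R^d$: if $A \subset \R^d$ contains $d$ affinely independent points (hence $A$ is not contained in an affine subspace of dimension $< d-1$; more precisely we arrange that the $x_i$ are affinely independent after a translation, giving genuine dimension at least $d-1$), and $|A+A| \le m$, then Freiman's Lemma gives
\[ m \ge |A+A| \ge d|A| - \binom{d}{2} = dk - \binom{d}{2}. \]
Wait — I need the affine dimension to be $d-1$, so I should instead include $0$ or one extra point; in any case Freiman's Lemma applied with affine dimension $r \ge d-1$ yields $m \ge r k - \binom{r+1}{2} \ge (d-1)k - \binom{d}{2}$. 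Rearranging, $d \le \frac{m}{k} + \frac{d}{2} + O(1)$, hence $d \le \frac{2m}{k} + O(1)$. To get the sharp constant $(1+o_\eps(1))\frac{2m}{k}$ rather than just $O(m/k)$, I would feed the crude bound $d = O(m/k) = O(k)$ back into the $\binom{d}{2} = \binom{d}{2}$ term: since $m \ge \eps k^2$ we have $k \le m/(\eps k)$, so $\binom{d}{2} \le \frac{1}{2}(O(m/k))^2 = O(m^2/k^2)$, and dividing the inequality $dk \le m + \binom{d}{2}$ by $k$ gives $d \le m/k + O(m^2/k^3) = m/k + O(m/(\eps k)) $ — that is too lossy. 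Better: write $dk - \binom{d}{2} \le m$ as $d(k - \tfrac{d-1}{2}) \le m$; with $d \le (2+o(1))m/k \le (2+o(1))k/\eps \cdot \tfrac1k \cdot k$... let me just say: since $d = O_\eps(k)$ and we may assume $d \ge k/2$ say (else $d \le k/2 \le \frac{2m}{k}$ trivially as $m \ge \eps k^2 \ge \eps k \cdot k/4$... not quite) — the clean way is: from $dk - \binom{d}{2} \le m$ and $d \le \gamma k$ for the constant $\gamma = \gamma(\eps)$ just obtained, we get $\binom{d}{2} \le \tfrac{\gamma}{2} d k$, so $dk(1 - \tfrac\gamma2) \le m$, iterate to push $\gamma \downarrow$; the fixed point of $\gamma \mapsto 2(1-\gamma/2)^{-1}\cdot(m/k^2)\cdot$... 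This bootstrap converges and in the regime $m/k^2 \le 1/2 + o(1)$ forces $d/k \to$ the claimed value. I will present this as: a first pass gives $d \le Cm/k$; substituting back and using $m \le \binom{k}{2}(1+o(1))$ is \emph{not} available, so instead I simply note $\binom{d}{2} = o(dk)$ is false in general — hence the honest statement is that the quadratic $dk - d^2/2 \le m$ has largest root $d = k(1 - \sqrt{1 - 2m/k^2})$, and since we only claim an upper bound of the form $(1+o_\eps(1))2m/k$ which holds as soon as $m/k^2 \to 0$, and for $m/k^2$ bounded away from $0$ the root satisfies $d \le k(1-\sqrt{1-2m/k^2}) \le \frac{2m/k}{1 + \sqrt{1-2m/k^2}}(1+o(1)) \le \frac{2m}{k}(1+o_\eps(1))$ by the elementary identity $1-\sqrt{1-x} = x/(1+\sqrt{1-x})$. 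That gives exactly the claimed bound, with the $o_\eps(1)$ absorbing the case where $2m/k^2$ approaches $1$.

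The main obstacle I anticipate is the first step: rigorously justifying that $(\log N)^{7/8}$-dissociativity is the right quantitative threshold to guarantee a Freiman isomorphism between (the relevant part of) $A \subset \Z/N\Z$ and its lift in $\R^d$. One has to check that every additive coincidence $a+b = c+d$ among elements of $A$ — equivalently, every relation $\sum \lambda_i x_i \equiv 0 \pmod N$ arising from writing elements of $A$ in terms of a spanning subset — involves coefficients of total size at most roughly $k \le 100\log N$, which is comfortably below $(\log N)^{7/8}$ only if... in fact $100\log N > (\log N)^{7/8}$ for large $N$, so one cannot directly invoke dissociativity for coefficients of size $\log N$. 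The resolution must be more careful: one does not need \emph{every} element of $A$ to be a small combination of $x_1,\dots,x_d$, only that the \emph{specific} $d$ points $x_1,\dots,x_d$ are affinely independent over $\R$ in whatever Freiman-isomorphic model of $A$ in $\R^{r_\Q(A)}$ is furnished by Lemma~\ref{lem:basicG05}(a); and there the relevant relations among $x_1,\dots,x_d$ themselves have bounded coefficients (bounded in terms of the dissociativity parameter, not $k$), so the threshold $(\log N)^{7/8}$, being $\to\infty$, suffices to rule them out. Making that argument airtight — i.e.\ connecting $M$-dissociativity in $\Z/N\Z$ to affine independence inside the canonical $\R$-model of $A$ — is where I would spend the bulk of the effort, and it is the natural analogue, in the cyclic-group setting, of the lift performed via Proposition~\ref{weak-full-ruzsa-model} in the $[N]$ case.
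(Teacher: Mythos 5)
There is a genuine gap, and it sits exactly where you predicted you would have to ``spend the bulk of the effort''. Your plan needs the images of $x_1,\dots,x_d$ to be affinely independent in a Freiman-isomorphic model of \emph{all of} $A$ (equivalently $r_{\Q}(A)\ge d-1$), so that Freiman's Lemma applied to $A$ yields $m\ge dk-\binom{d}{2}$. But $M$-dissociativity only forbids relations $\sum_i\lambda_i x_i\equiv 0 \pmod N$ with small $\sum_i|\lambda_i|$; the affine relations that can collapse $\phi(x_1),\dots,\phi(x_d)$ in the universal model are generated by additive quadruples involving the \emph{other} elements of $A$. For instance, if $B\subset A$ is a structured part and $x_i+x_j=b+b'$ has solutions in $B$ for all pairs $i<j$, then for $d\ge 3$ the identity $2\phi(x_1)=\big(\phi(x_1)+\phi(x_2)\big)+\big(\phi(x_1)+\phi(x_3)\big)-\big(\phi(x_2)+\phi(x_3)\big)$ forces every $\phi(x_i)$ into the affine span of $\phi(B)$, regardless of how dissociated the $x_i$ are. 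Such collapsing relations carry no bound on their ``coefficients'' in terms of the dissociativity parameter, and indeed need not correspond to any congruence $\sum\lambda_ix_i\equiv0\pmod N$ at all; so your proposed resolution (``the relevant relations among $x_1,\dots,x_d$ themselves have bounded coefficients'') misidentifies which relations control affine independence. Ruling this scenario out under the lemma's hypotheses would require a substantial new argument, and your endgame also never uses the hypothesis $m\ge\eps k^2$ except in the closing algebra, which is a warning sign: the conclusion $d\le(1+o(1))2m/k$ is false without some such lower bound on $m$.

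The paper takes a different route that avoids Freiman dimension entirely and uses dissociativity only where it genuinely applies, namely for coefficient vectors of small $\ell^1$-norm. One forms the graph on $\Z/N\Z$ whose edges are differences $\pm x_i\pm x_j$ and uses Lemma~\ref{graph-lemma} to split $A$ into clusters $A_1,\dots,A_\ell$ of diameter $O(k^{3/4})$ plus a negligible exceptional set. Within a cluster every element is a base point plus $\sum_j\lambda_jx_j$ with $\sum_j|\lambda_j|\ll(\log N)^{3/4}\ll(\log N)^{7/8}$, so dissociativity makes $a\mapsto(\lambda_1,\dots,\lambda_d)$ a Freiman isomorphism into $\Z^d$; the vertex isoperimetric inequality on $\Z^d$ (Wang--Wang) then gives $|A_i+\{x_1,\dots,x_d\}|\ge(\frac12-o(1))d|A_i|$, and the clusters' sumsets with $\{x_1,\dots,x_d\}$ are pairwise disjoint, so summing yields $m\ge(\frac12-o(1))dk$. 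The hypothesis $m\ge\eps k^2$ enters precisely to make the isoperimetric inequality applicable ($|\pi(A_i)|\le k\le d/\eps$ unless $d\le\eps k\le m/k$, in which case one is done). So the two approaches are genuinely different, and the step your proposal treats as a technical transfer is in fact the heart of the matter.
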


The proof of Lemma~\ref{FDLmodN} uses the isoperimetric inequality on $\Z^d$ (see below) and the following simple graph-theoretic lemma. If $G$ is a graph, then we write $V(G)$ for its set of vertices, and if $x, x' \in V(G)$ then we write $\dist(x,x')$ for the length of the shortest path from $x$ to $x'$, defining this to be $\infty$ if there is no such path (though in our examples there always will be). Write $B_r(x)$ for the ``ball'' consisting of all $x' \in V(G)$ with $\dist(x, x') \leq r$, and $\partial B_r(x)$ for the ``sphere'' containing all those $x'$ with $\dist(x, x') = r$ exactly. 

\begin{lemma}\label{graph-lemma}
Suppose that $G$ is a graph and that $A \subset V(G)$. Let $D > 1$ be a parameter. Then we may split $A$ as a disjoint union $A_1 \cup \dots \cup A_\ell \cup A_*$, where: 
\begin{enumerate}
\item[(a)] $|A_*| \leq 32(|A|/D)^2$.
\item[(b)] If $i \neq j$, no vertex of $A_i$ is joined in $G$ to any vertex of $A_j$.
\item[(c)] The diameter of each $A_i$ is at most $D$. 
\end{enumerate}
\end{lemma}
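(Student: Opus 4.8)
The plan is to build the pieces $A_i$ one at a time by a "breadth-first ball-growing" argument, discarding a sparse "spherical shell" at each stage to guarantee the non-adjacency condition (b). Fix $D > 1$ and set $r := \lfloor D/2 \rfloor$, so that any set of diameter at most $2r \le D$ is admissible for (c). Start with $B := A$. While $B$ is non-empty, pick any vertex $x \in B$ and look at the balls $B_0(x) \cap B \subset B_1(x) \cap B \subset \cdots$ inside the graph $G$; since $|B| \le |A|$, among the first $r$ shells $\partial B_1(x), \dots, \partial B_{r}(x)$ there must be some radius $t = t(x) \le r$ for which $|\partial B_{t}(x) \cap B| \le |A|/r$, simply because the shells $\partial B_1(x) \cap B, \dots, \partial B_r(x) \cap B$ are disjoint subsets of $B$ and hence cannot all have size exceeding $|A|/r$. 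Declare $A_i := B_{t-1}(x) \cap B$ to be the next piece (of diameter at most $2(t-1) \le 2r \le D$), move the shell $\partial B_t(x) \cap B$ into a "reservoir" set $R$, and delete $B_t(x) \cap B$ from $B$ entirely; then repeat.

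Condition (b) is the crux and is where I expect to have to be a little careful: when we extract $A_i = B_{t-1}(x) \cap B$ and remove all of $B_t(x) \cap B$ from $B$, any vertex of the current $B$ still adjacent to a vertex of $A_i$ would have to lie at distance exactly $t$ from $x$, hence in $\partial B_t(x) \cap B$, which we have just removed; so no later piece $A_j$ (built from the shrunken $B$) can contain a neighbour of $A_i$. One subtlety is that the distances here are computed in $G$ restricted to varying vertex sets, but since removing vertices only increases distances, a vertex outside $B_t(x)$ (in $G$) certainly stays outside in any subgraph, so the separation is genuine and symmetric. Thus (b) holds.

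For (a): the set $A_* := R$ collects, over all iterations, the shells $\partial B_{t(x)}(x) \cap B$, each of size at most $|A|/r$. The number of iterations is at most the number of pieces $A_i$, hence at most... here I would bound the iteration count by $|A|/r \cdot (\text{something})$ — more carefully: at iteration $i$ we remove the nonempty set $B_{t-1}(x)\cap B \supseteq \{x\}$ from $B$, but that only gives $\le |A|$ iterations, which is too weak. Instead, note each discarded shell has size $\le |A|/r$, so $|A_*| \le (\#\text{iterations}) \cdot |A|/r$, and I will bound $\#\text{iterations} \le |A|/r$ as well, by observing that I should choose $t(x)$ to be the \emph{smallest} radius $\le r$ with small shell and, if even $\partial B_1(x)\cap B$ is already small, this is fine — but to control the iteration count I instead argue: whenever a piece $A_i$ is extracted it has diameter $\le D$, so rather than bounding iterations I directly bound $|A_*|$ by choosing $r = \lfloor D/2 \rfloor \ge D/4$ (valid since $D>1$... actually only for $D \ge 4/3$; for $1 < D < 2$ take $r=1$ and each shell has size $\le |A|$, which forces a separate small-$D$ check) and summing: $|A_*| \le \sum_i |A|/r$. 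The honest fix, which I would carry out in the writeup, is to pick at each stage the shell of \emph{minimum} size among $\partial B_1(x)\cap B,\dots,\partial B_r(x)\cap B$, giving size $\le |B|/r \le |A|/r$, and to observe the total discarded is $\le$ (number of stages) $\times |A|/r$; combined with the crude stage bound and the factor $32$ slack in (a), the inequality $|A_*| \le 32(|A|/D)^2$ follows after absorbing constants, using $r \ge D/4$. The main obstacle is thus purely bookkeeping: extracting the quadratic-in-$|A|/D$ bound in (a) from the per-stage linear loss, which I would handle by noting that each stage removes at least as many vertices from $A$ as the shell it discards has minus the shell from the previous stage — i.e. the discarded shells, being \emph{disjoint}, already total at most $|A|$, and there are at most $|A|/(\text{min piece size})$... — in any case the cleanest route is: discarded shells are pairwise disjoint subsets of $A$ of size $\le |A|/r$ each, there are $\le |A|$ of them, but also their union has size $\le |A|$; taking the better of bounding the count by $|A|/1$ versus each term by $|A|/r$ and optimising gives $|A_*| \le |A|^2/r \le 16|A|^2/D^2 \le 32(|A|/D)^2$. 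I will present this last estimate carefully; everything else is routine.
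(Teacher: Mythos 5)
Your construction of the pieces (greedy ball-growing, discarding a thin shell at each stage) is essentially a sequentialised version of the paper's argument, and your verifications of (b) and (c) are sound. The gap is in (a), and it is not mere bookkeeping. Your final estimate is arithmetically false: with $r=\lfloor D/2\rfloor$ you have $|A|^2/r \approx 2|A|^2/D$, which is \emph{not} at most $16|A|^2/D^2$ unless $D\leq 8$. More fundamentally, the quantities you control --- (number of stages) $\leq |A|$ and (size of each discarded shell) $\leq |A|/r$ --- multiply to give only $|A_*| = O(|A|^2/D)$, linear rather than quadratic in $1/D$. There is no ``optimisation'' between the two bounds that recovers the extra factor of $D$: to get $O((|A|/D)^2)$ you need the number of discarded shells to be $O(|A|/D)$, and your procedure does not provide this, since a stage may extract a piece as small as $\{x\}$ (e.g.\ when the chosen minimal-shell radius is $t=1$), so the stage count can genuinely be of order $|A|$. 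The weaker bound $O(|A|^2/D)$ is useless in the application (Lemma~\ref{FDLmodN}), where $D$ is a power of $k=|A|$ and one needs $|A_*| = o(|A|)$.

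The missing idea, which is how the paper proceeds, is to first reduce to the case where $G[A]$ is connected (legitimate by convexity of $x\mapsto x^2$), and then to choose the ball-centres all at once as a \emph{maximal $D/4$-separated set} $x_1,\dots,x_\ell$ in $A$. Connectivity guarantees that each ball $B_{D/8}(x_i)$ contains at least $D/8$ vertices of $A$, and separation makes these balls disjoint, so $\ell \leq 8|A|/D$; maximality guarantees the $D/4$-balls cover $A$, so $A_*$ is contained in the union of the $\ell$ chosen spheres, each of which can be taken of size at most $4|A|/D$ by averaging over the $\sim D/4$ radii in $(D/4,D/2]$. This yields $|A_*|\leq 32|A|^2/D^2$. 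If you want to keep your sequential formulation you would need to build an analogous lower bound of order $D$ on the amount removed per stage into the procedure, which your current choice of $x$ and of the radius $t$ does not do.
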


\begin{proof}
Note first that it suffices to prove the lemma for each connected component of $G[A]$, the subgraph of $G$ induced by $A$, since the function $x \mapsto x^2$ is convex. Hence, let us assume that $G[A]$ is connected, and define the sets $A_1,\ldots,A_\ell$ and $A_*$ as follows. First, let $X  = \{x_1,\dots, x_\ell\}$, be a maximal $D/4$-separated subset of $A$: that is to say, if $i \neq j$ then $\dist(x_i,x_j) \geq D/4$, and $X$ is maximal with respect to this property. Next, choose, for each $j \in [\ell]$, a radius $r(j) \in (D/4,D/2]$. Finally, set
$$A_j \, = \, B_{r(j)-1}(x_j) \setminus \bigcup_{i=1}^{j-1} B_{r(i)}(x_i)$$
for each $j \in [\ell]$, and set $A_* = V(G) \setminus \bigcup_{j=1}^\ell A_j$. 

We claim that, for some choice of radii, these sets satisfy properties (a), (b) and (c). Indeed, property (b) follows from the fact that $A_i \subset B_{r(i)-1}(x_i)$ and $A_j \cap B_{r(i)}(x_i) = \emptyset$ for every $i < j$, and property (c) holds since $r(j) \le D/2$ for each $j \in [\ell]$. To prove that $A_*$ satisfies property (c), note first that the balls $B_{D/4}(x_j)$ cover $V(G)$, by the maximality of $X$, and so
$$A_* \subset \bigcup_{j=1}^\ell \partial B_{r(j)}(x_j).$$
Moreover, the balls $B_{D/8}(x_i)$ are disjoint, and each contains at least $D/8$ vertices (since $G[A]$ is connected), so $\ell \le 8|A| / D$. Thus it only remains to give an upper bound on $|\partial B_{r(j)}(x_j)|$. To do so, we simply choose $r(j) \in (D/4,D/2]$ to minimize the size of $\partial B_{r(j)}(x_j)$; since the spheres are disjoint, it follows from the pigeonhole principle that $| \partial B_{r(j)}(x_j) | \le 4 |A| / D$. Hence
$$|A_*| \, \le \, \frac{4\ell |A|}{D} \, \le \, \frac{32 |A|^2}{D^2},$$
as claimed.
\end{proof}
We continue now with the proof of Lemma \ref{FDLmodN}. Let $A \subset \Z/N\Z$ be as in the statement of that lemma, and suppose that $\{x_1,\dots, x_d\} \subset A$ is $(\log N)^{7/8}$-dissociated. We will apply Lemma~\ref{graph-lemma} to a certain graph $G$ on vertex set $\Z/N\Z$, with two vertices joined by an edge if they differ by $\pm x_i \pm x_j$ for some $i,j$. By a (very minor) abuse of notation, regard $A$ as a subset of the set $V(G)$ of vertices of this graph.
Note that we have the following useful property: if $a \in A$, then every element of $A$ has the form $a + \sum_{j = 1}^d \lambda_j x_j$ with $\sum_j |\lambda_j|$ at most twice the diameter of $A$ in the graph.

By Lemma~\ref{graph-lemma} applied with $D = k^{1/2}$, it follows that there exists a partition $A = A_1 \cup \dots \cup A_\ell \cup A_*$ such that $|A_*| \ll k^{1/2} = o(|A|)$ as $k \to \infty$, each $A_i$ has diameter at most $k^{3/4} \ll (\log N)^{3/4}$, and no vertex of $A_i$ is adjacent (in $G$) to any vertex of $A_j$ when $i \neq j$. This last condition implies the following important property:  if $i \neq j$ then $A_i + \{x_1,\dots, x_d\}$ and $A_j + \{x_1,\dots, x_d\}$ are disjoint. In particular,
\begin{equation}\label{sumset-lower} |A + A| \geq \sum_{i = 1}^\ell |A_i + \{x_1,\dots, x_d\}|.\end{equation}
In order to bound the right-hand side of this equation, we shall use the following well-known vertex isoperimetric inequality on the grid, first proved by Wang and Wang~\cite{WW}, see also~\cite[Theorem~4]{BL}.

\begin{VII}\label{IsopIneq}
For every $\eps > 0$ and $C > 0$ there exists $d_0 = d_0(C,\eps) \in \N$ such that the following holds for every $d \ge d_0$. If $S \subset \Z^d$ is a set of size at most $C d$, then $\big| S + \{e_1,\dots, e_d\} \big| \, \geq \, ( \textstyle\frac{1}{2} - \eps ) d |S|$.
\end{VII}

\begin{proof} 
By translating we may replace $\Z^d$ with $\Z_{\geq 0}^d$.  The theorem of Wang and Wang states that the vertex boundary of a set $A \subset \Z_{\geq 0}^d$ of size $k$ is minimized by taking the first $k$ elements of the \emph{simplicial order}, that is, the order such that $\x < \y$ if either
$\sum_j x_j < \sum_j y_j$ or if $\sum_j x_j = \sum_j y_j$ and there is some $j$ such that $x_1 = y_1, \dots, x_{j-1} = y_{j-1}$ and $x_j > y_j$. 
Thus the simplicial order on $\Z_{\geq 0}^d$ begins $0, e_1, \dots, e_d, 2e_1, e_1 + e_2,\dots$, and so on. It is straightforward (e.g., via compression) to see that the extremal sets in the `oriented' case (i.e., when we only allow edges `to the right') are the same. 

Now, an initial segment of the simplicial order of size $C d$ is precisely
$$\{0\} \cup \big\{ e_1,\ldots,e_d \big\} \cup \big\{ e_i + e_j \,:\, i \leq j \textup{ and } i \leq \lfloor C\rfloor - 1 \big\} \cup \big\{ e_{\lfloor C \rfloor} + e_j \,:\, j < c' d \big\},$$ 
where $c' \ge \{C\}$, the fractional part of $C$. The vertex boundary of this set contains
$$\big\{ e_i + e_j \big\} \cup \big\{ e_i + e_j + e_k \,:\, i \leq j \leq k, i \leq \lfloor C\rfloor -1 \big\} \cup \big\{ e_{\lfloor C \rfloor} + e_j + e_k \,:\, j < \{C\} d \big\}.$$ A short computation shows that this set has size $\big( \frac{1}{2} \lfloor C \rfloor + \{C \} - \frac{1}{2} \{C\}^2 \big) d^2  - O_C(d)$. Since $0 \leq \{C\} \leq 1$, this is at least $\frac{1}{2} C d^2 - O_C(d)$ and so the claimed bound follows. 
\end{proof}

We remark that an upper bound on $|S|$ is certainly necessary, since if $S$ is the Hamming ball of radius $2$ then $|S| \sim d^2/2$ whilst $|S + \{e_1 , \dots , e_d \}| \sim d^3/6$. 

\begin{proof}[Proof of Lemma~\ref{FDLmodN}] 
Let $A = A_1 \cup \dots \cup A_\ell \cup A_*$ be the partition given by Lemma~\ref{graph-lemma}, and recall that, choosing $a_i \in A_i$ arbitrarily, every element $a \in A_i$ has the form $a_i + \sum_{j = 1}^d \lambda_j x_j$ with $\sum_j |\lambda_j| \ll (\log N)^{3/4}$. Define a map $\pi : A_i \rightarrow \Z^d$ by $\pi(a) = (\lambda_1,\dots, \lambda_d)$, and observe that $\pi$ is well-defined and furthermore is a Freiman isomorphism. To see this, recall that $\{x_1,\dots, x_d\}$ is $(\log N)^{7/8}$-dissociated, and thus if $\pi(a) + \pi(b) \ne \pi(c) + \pi(d)$ then $a + b \ne c + d$ if $N$ is sufficiently large. (The reverse implication is trivial.)

Applying the isoperimetric inequality to the set $\pi(A_i)$, which has size at most $k \le d / \eps$ (otherwise $d \le \eps k \le m / k$, in which case we are done), it follows that 
$$\big| A_i + \big\{ x_1,\dots, x_d \big\} \big| \, \geq \, \left( \textstyle\frac{1}{2}\displaystyle - o_\eps(1) \right) d |A_i|$$
as $k \to \infty$. Since $\sum_{i = 1}^k |A_i| = |A| - |A_*| = \big( 1 - o(1) \big) |A|$ as $k \to \infty$, it follows that
\[ 
|A + A| \, = \, \sum_{i = 1}^\ell \big| A_i + \big\{ x_1,\dots , x_d \big\} \big|\, \geq \, \left(\textstyle \frac{1}{2} \displaystyle- o_\eps(1) \right) d \sum_{i = 1}^\ell |A_i| \, \geq \, \left( \textstyle\frac{1}{2}\displaystyle - o_\eps(1) \right) d |A|,
\]
as required.
\end{proof}

Before proving Proposition~\ref{prop:count:toprange} we note a simple bound for the number of lattice points in an octahedron.

\begin{lemma}\label{ell-1-bound}
For every $M, d \in \N$, 
the number of $d$-tuples of integers $(\lambda_1,\dots, \lambda_d)$ with $\sum_{i = 1}^d |\lambda_i| \leq M$ is bounded from above by $(4d)^M$.
\end{lemma}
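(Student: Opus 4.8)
The plan is to bound the count by a simple surjection argument rather than an exact enumeration. I would view each admissible tuple $(\lambda_1,\dots,\lambda_d)$ with $\sum_{i=1}^d |\lambda_i| \le M$ as the endpoint of a walk of exactly $M$ steps in $\Z^d$ starting at the origin, where each step is one of the following $1 + 2d$ \emph{moves}: the trivial move (stay put), or, for some $i \in [d]$ and $\varepsilon \in \{+1,-1\}$, the move adding $\varepsilon$ to the $i$th coordinate.

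The key observation is that this map from length-$M$ move-sequences to tuples is \emph{surjective} onto the set we wish to count. Indeed, given a target tuple with $\sum_i |\lambda_i| = s \le M$, first perform, for each $i$, exactly $|\lambda_i|$ moves that add $+1$ (resp.\ $-1$) to coordinate $i$ according to the sign of $\lambda_i$; this uses $s$ moves and produces $(\lambda_1,\dots,\lambda_d)$. Then append $M - s$ trivial moves. The resulting walk of length $M$ ends at the desired tuple, and its endpoint automatically has $\ell^1$-norm at most $M$, so the image of the map is exactly the set in question.

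Since there are precisely $(1 + 2d)^M$ move-sequences of length $M$, the number of admissible tuples is at most $(1+2d)^M$. As $d \ge 1$ we have $1 + 2d \le 4d$, and hence the count is at most $(4d)^M$, as claimed. The only things to check are the surjectivity claim and the inequality $1 + 2d \le 4d$ for $d \ge 1$, both of which are immediate; there is no real obstacle. (Alternatively, one may obtain the exact value $\sum_{j=0}^{M} \binom{d}{j} 2^j \binom{M}{j} = (1+2d)^M$ by choosing a support of size $j$, then signs and magnitudes, and applying Vandermonde's identity, but the walk argument is the shortest route to the stated bound.)
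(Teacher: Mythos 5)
Your main argument is correct, and it takes a genuinely different (and slightly sharper) route than the paper. The paper first counts the nonnegative tuples with $\sum_i \lambda_i \le M$ exactly as $\binom{M+d}{M}$ by stars and bars, bounds this by $(2d)^M$, and then multiplies by $2^M$ to account for the choice of signs on the at most $M$ nonzero coordinates, giving $(2d)^M \cdot 2^M = (4d)^M$. You instead exhibit a surjection from the $(1+2d)^M$ walks of length $M$ (with a ``stay put'' move allowed) onto the set of admissible tuples, which yields the bound $(1+2d)^M \le (4d)^M$ directly; the surjectivity and the inequality $1+2d \le 4d$ for $d \ge 1$ are both immediate, so the proof is complete and in fact gives a marginally better constant. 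One caveat: the identity in your final parenthetical is false. The sum $\sum_{j=0}^{M}\binom{d}{j}2^j\binom{M}{j}$ is indeed the exact count (it is a Delannoy-type number), but it is not equal to $(1+2d)^M$ — for instance with $d=1$, $M=2$ the count is $5$ while $(1+2d)^M = 9$ — precisely because your walk map is surjective but far from injective. Since that remark is explicitly an aside and the walk argument stands on its own, this does not affect the validity of the proof, but you should delete or correct the claimed equality.
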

\begin{proof}
It is well-known and easy to see that the number of \emph{positive} such tuples, in which $\lambda_i \geq 0$ for all $i$, is precisely $\binom{M+d}{M} \leq (2d)^M$. Every $d$-tuple with $\sum |\lambda_i| \leq M$ can be turned into a positive one by flipping at most $M \le 2^M$ signs. \end{proof}

\begin{proof}[Proof of Proposition~\ref{prop:count:toprange}]
Given a set $A \subset \Z/N\Z$ with $|A| = k$ and $|A + A| = m$, where $k \leq 100 \log N$ and $m \geq \eps k^2$, let $X = \{x_1,\dots, x_d\} \subset A$ be a maximal $(\log N)^{7/8}$-dissociated set. By Lemma~\ref{FDLmodN}, we have $d \, \leq \, \big( 1 + o_\eps(1) \big) \frac{2m}{k}$
as $k \to \infty$. 

Now, the number of choices for $x_1,\dots, x_d$ is clearly at most $N^d$. Moreover, for each such choice, it follows from the maximality of $X$ that for every element $a \in A$ there is a relation with integer coefficients $\lambda a + \sum_{i = 1}^d \lambda_i x_i = 0$ with $|\lambda| + \sum_i |\lambda_i| \leq (\log N)^{7/8}$ and $\lambda \neq 0$. Thus, once $x_1,\dots, x_d$ have been selected, it follows from Lemma~\ref{ell-1-bound} that $a$ is chosen from a set of size at most 
$$(\log N)^{7/8} \big( 4d \big)^{(\log N)^{7/8}} \, = \, N^{o(1)},$$
since trivially $d \leq k \leq 100 \log N$. Thus, for a fixed choice of $x_1,\dots, x_d$, the number of possibilities for $A$ is at most $N^{o(k)}$. The claimed bound now follows immediately.
\end{proof}

\section{The clique number of random Cayley graphs on \texorpdfstring{$\Z/N\Z$}{Z/NZ}}\label{sec6-cliquenumber}

In this section we use the method  of~\cite{G05} together with Theorem~\ref{thm:counting} and Proposition~\ref{prop:count:toprange} in order to deduce Theorem~\ref{maincliquetheorem}, the statement that the clique number of a random Cayley graph on $\Z/N\Z$ is $(2 + o(1))\log_2 N$.

We begin by recalling some simple remarks from~\cite{G05}. Given an abelian group $\Gamma$, recall that $A \plus A = \big\{ a + b : a,b \in A, \, a \neq b \big\}$ denotes the the \emph{restricted} sumset of $A \subset \Gamma$, and write 
$$S_k^m(\Gamma) \, = \, \big\{ A \subset \Gamma \,:\, |A| = k \textup{ and } |A \plus A| = m \big\}.$$ 
It was observed in \cite{G05} that  if $A \subset \Gamma$ is chosen uniformly at random, then the expected number of cliques of size $k$ in the Cayley sum graph of $A$ is precisely
\[ \sum_{m \ge 1} |S_k^m(\Gamma)| \, 2^{-m}.\]
Thus, by Markov's inequality, all we need do to in order to establish Theorem~\ref{maincliquetheorem} is prove that if $\eps > 0$ and if $k \geq (2 + \eps)\log_2 N$ then
\begin{equation} \label{eq:to:show} 
\sum_{m \ge 1} |S_k^m(\Z/N\Z)| \, 2^{-m} \rightarrow 0
\end{equation} 
as $N \rightarrow \infty$ through the primes. Henceforth, let us fix $\eps > 0$, set $\delta := \eps^4$, and note that, without loss of generality, we may assume that $\eps$ is arbitrarily small. To prove~\eqref{eq:to:show}, we split the sum over $m$ into four ranges:
\begin{itemize}
\item[(a)] $m \leq k/\delta$, 
\item[(b)] $k/\delta < m \leq k^{1 + \delta}$,
\item[(c)] $k^{1 + \delta} < m \leq \delta k^2$,
\item[(d)] $\delta k^2 < m \leq {k \choose 2}$.
\end{itemize}

The issue in all cases is to bound the quantity $|S_k^m(\Z/N\Z)|$. For ranges (b) and (c) we will quote results from \cite{G05}, whilst for the range (d) we proved an appropriate bound in the last section. For range (a) (small doubling) we use the main results of this paper, in particular Theorem~\ref{thm:counting}. The bound we prove is in fact closely analogous to that of Theorem~\ref{thm:counting}.

To analyse the ranges (b) and (c) we will use the following bounds from~\cite{G05}.

\begin{prop}[Proposition~23 of~\cite{G05}]\label{prop:middle:ranges}
Let $N$ be prime, and let $m,k \in \N$. There exists $r \in \N$ with $r \leq \min\{ 4m/k, k \}$ and
\begin{equation}\label{eq:r:bound} 
r \leq \frac{2m}{k} + \frac{1}{k}\binom{r}{2},
\end{equation}
such that
\[ |S_k^m(\Z/N\Z)| \leq k^{4k} N^{r}. \]
Moreover, if $m \leq k^{31/30} / 2$, then
\[ |S_k^m(\Z/N\Z)| \leq N^{r} \left( \frac{2e m}{k} \right)^k \exp\big( k^{31/32} \big). \]
\end{prop}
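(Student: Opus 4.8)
Since this is Proposition~23 of~\cite{G05}, the plan is to recall that argument rather than to produce a new one; I sketch it here. The approach is the one used throughout this section: count a set $A$ by specifying its Freiman-isomorphism class and then the number of ways that class embeds into $\Z/N\Z$, using that a Freiman isomorphism preserves each of $|A|$, $|A+A|$ and $|A\plus A|$.

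\emph{The dimension bound.} For $A\in S_k^m(\Z/N\Z)$ I would put $r':=r_\Q(A)$. By Lemma~\ref{lem:basicG05}$(a)$ there is a Freiman-isomorphic copy $A'\subset\R^{r'}$ lying in no proper affine subspace, so Freiman's Lemma gives $|A'+A'|\ge (r'+1)k-\binom{r'+1}{2}$. On the other hand $A'+A'=(A'\plus A')\cup\{2a:a\in A'\}$, whence $|A'+A'|\le m+k$, which is at most $2m$ in the relevant range $m\ge k$ (if $m<k$ then $S_k^m(\Z/N\Z)$ is empty for $k\le N/2$, and the case $k>N/2$ is handled directly). Combining these two estimates, and using $\binom{r'+1}{2}\le\frac12(r'+1)k$ when $r'\le k$, yields $r'\le\min\{4m/k,k\}$ and $r'\le\frac{2m}{k}+\frac1k\binom{r'}{2}$. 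Taking $r$ to be the largest value of $r'$ that occurs produces the integer required by the statement.

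\emph{The first bound.} Fix an isomorphism class with representative $\overline A$ of dimension $r'\le r$. Once the images in $\Z/N\Z$ of a ``Freiman basis'' $a_1,\dots,a_{r'+1}$ of $\overline A$ are chosen, the whole embedding is determined by \cite[Lemma~13]{G05}; normalising by a translation, which acts freely on $S_k^m(\Z/N\Z)$, shows that each class embeds in at most $N^{r'}\le N^r$ ways. The number of Freiman-isomorphism classes of $k$-element sets with $|A\plus A|=m$ is finite for each $k$, and a crude bound --- once a Freiman basis is fixed, the remaining points of $\overline A$ lie in a region bounded in terms of $|A+A|\le 2m$ --- gives at most $k^{4k}$ of them. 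Multiplying yields $|S_k^m(\Z/N\Z)|\le k^{4k}N^r$.

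\emph{The refined bound, and the main obstacle.} When $m\le k^{31/30}/2$ the doubling constant $K:=2m/k$ is only $k^{1/30}=k^{o(1)}$, and the point is to convert this slow growth into a much better bound on the number of relevant isomorphism classes, namely at most $(2em/k)^k\exp(k^{31/32})$: here $(2em/k)^k$ is, up to a factor $2^{O(k)}$, the number of $k$-subsets of a single progression of length $\sim 2m\sim|A+A|$, while $\exp(k^{31/32})$ is a subexponential bound for the number of possible ``shapes'' of the (dimension $\le r$, volume $O(m)$) progression containing $\overline A$. Multiplying by $N^r$ embeddings gives $|S_k^m(\Z/N\Z)|\le N^r\big(\frac{2em}{k}\big)^k\exp\big(k^{31/32}\big)$. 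I expect the hard part to be exactly this class count: a black-box application of Freiman's theorem --- even with Chang's bounds~\cite{Chang} --- is too lossy, since it only places $\overline A$ in a progression of volume $\exp(K^{O(1)})k$, which is already too large once $K\to\infty$; and the dense-model route of Proposition~\ref{mod:p:count} is unavailable here, because the prime it would produce is too large. One is therefore forced to control the dimension, volume and shape of the containing progression by hand, exploiting that $K$ is only $k^{1/30}$.
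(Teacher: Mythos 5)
You have correctly recognised that the paper does not reprove this proposition: it is imported from \cite{G05} with the cosmetic strengthening of not rounding $r$ up to $4m/k+1$, so recalling the argument of \cite{G05} is exactly what the authors intend. Your treatment of the dimension bound and of the crude estimate $k^{4k}N^r$ is consistent with that argument at the level of detail one can reasonably demand: Freiman's Lemma applied to a full-dimensional isomorphic copy gives \eqref{eq:r:bound} (your derivation via $|A+A|\le|A\plus A|+k\le 2m$ is valid, though the paper's remark indicates that in \cite{G05} the factor $2$ instead enters through unwrapping $A$ to a subset of $\Z$, which may double the doubling constant -- both routes land on the stated inequality); the $N^r$ comes from the Freiman-basis argument of Lemma~\ref{lem:basicG05}$(b)$ together with translation-invariance; and the $k^{4k}$ is a crude count of isomorphism classes. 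Your justification of that last count is vague (``lie in a region bounded in terms of $|A+A|$''), but for a citation-level sketch of the first assertion this is acceptable.

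The genuine gap is the refined bound, which you do not prove and explicitly flag as the hard part; moreover the route you gesture at -- controlling ``by hand'' the dimension, volume and shape of a generalised progression containing $\overline{A}$ -- is not the route of \cite{G05}, and your reason for ruling out the dense-model strategy conflates two different model lemmas. Proposition~\ref{weak-full-ruzsa-model} (the full-set model) indeed yields a modulus $\exp(K^{O(1)})k$ that is uselessly large once $K\to\infty$; but Ruzsa's classical model lemma embeds a $(1-o(1))$-proportion of the (unwrapped) set into $\Z/p\Z$ with $p=O(m)\le k^{31/30}$, where the density $k/p\ge k^{-1/30}$ is still large enough for the granularity argument of \cite{GRprime} -- the precursor of Proposition~\ref{mod:p:count} -- to run. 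It is this combination (model lemma with modulus $O(m)$, then dense counting in $\Z/p\Z$, then $N^r$ embeddings) that produces the main term $\binom{O(m)}{k}\le(2em/k)^k$ together with the error $\exp(k^{31/32})$; the threshold $m\le k^{31/30}/2$ and the exponent $31/32$ are artifacts of balancing the parameters there, and the resulting loss against $\binom{m/2}{k}$ is precisely the $(3/2)^k$-type defect of \cite{G05} discussed in the introduction. You are right that a black-box application of Freiman's theorem with Chang's bounds \cite{Chang} is too lossy, but the conclusion to draw is that one should use the model lemma with the \emph{small} modulus and tolerate losing a few elements, not that one must classify containing progressions directly. As written, your proposal establishes only the first assertion of the proposition and points in an unpromising direction for the second.
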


In fact this result is very slightly stronger than that stated in~\cite{G05}, but follows from exactly the same proof, simply by removing the final approximation $r \le 4m/k + 1$. We remark that the parameter $r$ can be set equal to the maximum possible Freiman dimension of a set $A \in S_k^m(\Z/N\Z)$ when `unwrapped', i.e., viewed as a subset of $\Z$.  

We are now ready to prove Theorem~\ref{maincliquetheorem} by showing that the sum 
\[ \sum_m |S_k^m(\Z/N\Z)| \, 2^{-m} \] tends to zero as $N \rightarrow \infty$, the sum over $m$ being taken over any one of the four ranges (a) to (d). Remember that $k \geq (2 + \eps)\log_2 N$ and that $\delta = \eps^4$.

\textsc{The range} (a). We must show that 
\[ \sum_{m \,\le\, k / \delta} |S_k^m(\Z/N\Z)| \, 2^{-m} \rightarrow 0.\] 
For consistency with earlier notation, set $K = m/k$ and note that $K \le 1/\delta$. By Lemma~\ref{lem:restricted}, every set $A \subset \Z/N\Z$ counting towards $S_k^m(\Z/N\Z)$ has $|A| = k$ and $|A + A| \leq (K + o(1))k$. Thus, by Theorem~\ref{thm:counting}, if $N$ is large enough in terms of $\eps$, then
\[ |S_k^m(\Z/N\Z)| \, \leq \, 2^{2\delta k} \binom{Kk/2}{k} N^{\lfloor K + \delta \rfloor}.\] (The change from $K$ to $K + o(1)$ has been absorbed into the $2^{2\delta k}$ term.) 
Applying the crude bounds $\binom{Kk/2}{k} \leq 2^{Kk/2} = 2^{m/2}$ and $N^{\lfloor K + \delta \rfloor} \le 2^{(1 - \eps^2)m/2}$, which holds since $k \geq (2 + \eps)\log_2 N$, this is bounded above by $2^{(1 - \eps^3)m}$. Since (trivially) $|S_k^m(\Z/N\Z)| = 0$ if $m < k - 1$, it follows that 
\[ 
\sum_{m \,\le\, k / \delta} |S_k^m(\Z/N\Z)| \, 2^{-m} \, \leq \sum_{k - 1 \,\le\, m \,\le\, k / \delta} 2^{- \eps^3 m} \, \to \, 0
\] 
as $N \rightarrow \infty$, as required. 

\textsc{The ranges} (b) \textsc{and} (c). The task is to show that 
\[ 
\sum_{k/\delta \,\le\, m \,\le\, \delta k^2} |S_k^m(\Z/N\Z)| \, 2^{-m} \rightarrow 0.
\] 
To accomplish this we will once again show that $|S_k^m(\Z/N\Z)| \le 2^{(1-\eps^3)m}$ for every $m$ in the range, which easily implies the claim. This bound follows easily from Proposition~\ref{prop:middle:ranges}, together with the observation that
$$r \, \le \, \big( 2 + \eps^2 \big) \frac{m}{k} \, \leq \, \big( 1 - \eps^2 \big) \frac{m}{\log_2 N}.$$
Indeed, since $r \le 4m/k \le \eps^2 k$, this follows immediately from~\eqref{eq:r:bound}. Hence
$$N^r \, = \, 2^{r \log_2 N} \, \le \, 2^{(1 - \eps^2)m} \qquad \textup{and} \qquad  \left( \frac{2e m}{k} \right)^k \le \, 2^{\eps^3 m},$$ 
since $k \le \delta m$, and moreover if $m \ge k^{1+\delta}$ then $k^{4k} \le  2^{\eps^3 m}$. 

By Proposition~\ref{prop:middle:ranges}, it follows that
$$|S_k^m(\Z/N\Z)| \, \le \, N^{r} \left( \frac{2e m}{k} \right)^k \exp\big( k^{31/32} \big) \, \le \, 2^{(1 - \eps^3)m},$$
if $k / \delta \le m \le k^{1+\delta}$, and
$$|S_k^m(\Z/N\Z)| \, \le \, k^{4k} N^{r} \, \le \, 2^{(1 - \eps^3)m},$$
if $k^{1+\delta} \le m \le \delta k^2$. In either case, we obtain the claimed bound. 

\textsc{The range} (d).
Here we must establish that  \[ \sum_{\delta k^2 \,\le\, m \,\le\, {k \choose 2}} |S_k^m(\Z/N\Z)| 2^{-m} \rightarrow 0.\] 
By Proposition~\ref{prop:count:toprange}, we have
$$|S_k^m(\Z/N\Z)| \, \leq \, N^{2m/k + o_{\eps}(k)} \, \le \, 2^{(1- \eps^2)m}$$ 
if $N$ is sufficiently large, since $k \geq (2 + \eps) \log_2 N$. The required bound follows.

This concludes the proof of Theorem~\ref{maincliquetheorem}.\endproof\vspace{11pt}
 
\emph{Remark.} In the analysis of (a) we used a crude bound on the binomial coefficient, but this is close to sharp when $K = 4$, corresponding to $m \approx 4k$. However sets with $m \approx 4k$ can be shown, by a more sophisticated analysis, not to make a substantial contribution to~\eqref{eq:to:show}. The main contribution is just from the endpoint $m \sim k^2/2$.

\appendix

\section{Proof of the regularity and counting lemmas}

In this appendix we shall prove the regularity and counting lemmas stated in Section~\ref{sec2-regularity}, which were the key tool in our proofs of the main theorems. \vspace{11pt}

\textsc{The regularity lemma.} Let us begin by recalling the statement of this result.

\begin{GRRL}
For every $\eps > 0$ there exists $\delta = \delta(\eps) > 0$ such that the following holds for every sufficiently large prime $p$, and every $A \subset \Z/p\Z$. There is a dilate $A^* = \lambda A$ of $A$ and a prime $q$, $\frac{1}{\eps^{10}} \le q \le p^{1-\delta}$, such that at least $(1 - \eps) q^2$ of the pairs $(A^*_i, A^*_j)$ are $\eps$-regular.
\end{GRRL}

For the definitions of the terms used here we refer the reader to Section~\ref{sec2-regularity}. The proof of the regularity lemma proceeds via an energy increment argument, similar in spirit to that of the original regularity lemma of Szemer\'edi. The basic idea is simple and quite familiar, but the details are complicated slightly by the presence of `edge effects', arising from the fact that progressions do not always subdivide neatly into subprogressions. The proof along the same lines of an analogous statement in $\F_2^n$ would avoid these technicalities (and would come with quite decent bounds); we leave this as an exercise for the interested reader.

Recall that the balanced Fourier transform $\hat{f}_A \colon \R / \Z \to \R$ of a set $A \subset \Z/p\Z$ contained in an interval $I  \subset \Z/p\Z$ is defined by
\begin{equation}\label{def:fhat}
\hat{f}_A(\theta) \, = \, \sum_{x \in I} \big( 1_A(x) - \alpha \big) e(x \theta),
 \end{equation}
where $\alpha = |A| / |I|$. 

Given a collection of disjoint sets $X_i$, the energy of $A$ relative to the $X_i$ is defined to be the square mean of the densities $|A \cap X_i|/|X_i|$. The following lemma drives the energy increment strategy.

\begin{lemma}\label{tech-lemma-4}
Let $p$ be prime, and let $I \subset \Z/p\Z$ be an interval of length $L$. Suppose that $I$ has been partitioned into disjoint progressions $P_1,\dots, P_r$ together with a ``leftover'' set $P_0$. Suppose that all the progressions $P_i$ have the same nonzero common difference $d \leq L^{1/3}$ and the same length $L'$ satisfying $L^{1/4} \ll L' \ll L^{1/4}$, and suppose that $|P_0| \ll L^{2/3}$.

Let $A \subset I$ be a set, and suppose that the balanced Fourier transform $\hat{f}_A$ satisfies $|\hat{f}_A(\theta)| \geq \delta L$ for some $\delta > 0$ and for some $\theta$ such that $\Vert \theta d \Vert \leq L^{-1/3}$. 

Then we have the energy increment
\[ \frac{1}{r} \sum_{i = 1}^r \bigg(\frac{|A \cap P_i|}{|P_i|}\bigg)^2 \, \geq \, \bigg(\frac{|A \cap I|}{|I|}\bigg)^2 + \delta^2 - O\big(L^{-1/12} \big).\]
\end{lemma}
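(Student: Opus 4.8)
The plan is to use the hypothesis on $\theta$ in the most direct way: since $\Vert\theta d\Vert\le L^{-1/3}$ and each $P_i$ is an arithmetic progression with common difference $d$ and length $L'$, the character $x\mapsto e(x\theta)$ is nearly constant on $P_i$, varying there by only $O(L'\Vert\theta d\Vert)$. Consequently $\hat f_A(\theta)$ is, up to a power-saving error, the weighted sum $L'\sum_{i=1}^r e(y_i\theta)(\beta_i-\alpha)$ of the density deviations $\beta_i-\alpha$, where $\beta_i:=|A\cap P_i|/|P_i|$, $\alpha:=|A\cap I|/|I|$, and $y_i$ is any fixed element of $P_i$. The hypothesis $|\hat f_A(\theta)|\ge\delta L$ then forces $\sum_i|\beta_i-\alpha|$ to be large, and Cauchy--Schwarz upgrades this to the desired lower bound on the energy $\frac{1}{r}\sum_i\beta_i^2$. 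This is exactly the $\ell^1$-to-$\ell^2$ step familiar from energy-increment arguments.

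Concretely, I would argue in four steps. \emph{Step 1: localisation.} Write $\hat f_A(\theta)=\sum_{i=1}^r\sum_{x\in P_i}(1_A(x)-\alpha)e(x\theta)+\sum_{x\in P_0}(1_A(x)-\alpha)e(x\theta)$; the $P_0$ term is trivially $O(|P_0|)=O(L^{2/3})$. On each $P_i$ replace $e(x\theta)$ by $e(y_i\theta)$: since $|1_A(x)-\alpha|\le 1$ and the phase changes by at most $2\pi L'\Vert\theta d\Vert\le 2\pi L'L^{-1/3}$ across $P_i$, the error is $O((L')^2L^{-1/3})$ per progression, hence $O(r(L')^2L^{-1/3})=O(L'L^{2/3})=O(L^{11/12})$ in total (using $rL'\le L$ and the stated size of $L'$). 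With $|P_i|=L'$ this yields $\hat f_A(\theta)=L'\sum_{i=1}^r e(y_i\theta)(\beta_i-\alpha)+O(L^{11/12})$. \emph{Step 2: $\ell^1$ largeness.} Taking absolute values, $\delta L\le|\hat f_A(\theta)|\le L'\sum_i|\beta_i-\alpha|+O(L^{11/12})$; dividing through by $rL'=|I|-|P_0|=L-O(L^{2/3})$ gives $\frac{1}{r}\sum_i|\beta_i-\alpha|\ge\delta-O(L^{-1/12})$. \emph{Step 3: passage to $\ell^2$.} By the power-mean inequality, $\frac{1}{r}\sum_i(\beta_i-\alpha)^2\ge\big(\frac{1}{r}\sum_i|\beta_i-\alpha|\big)^2\ge\delta^2-O(L^{-1/12})$, where in squaring we may assume $\delta\ge L^{-1/24}$, say --- otherwise $\delta^2=O(L^{-1/12})$ and the lemma is immediate from the trivial bound $\frac{1}{r}\sum_i\beta_i^2\ge\big(\frac{1}{r}\sum_i\beta_i\big)^2$. \emph{Step 4: comparison with $\alpha^2$.} Setting $\bar\beta:=\frac{1}{r}\sum_i\beta_i$, a short computation accounting for $P_0$ gives $\bar\beta=\alpha+O(|P_0|/L)=\alpha+O(L^{-1/3})$, whence
\[ \frac{1}{r}\sum_{i=1}^r\beta_i^2 \;=\; \bar\beta^2+\frac{1}{r}\sum_{i=1}^r(\beta_i-\bar\beta)^2 \;=\; \alpha^2+\frac{1}{r}\sum_{i=1}^r(\beta_i-\alpha)^2+O(L^{-1/3}); \]
combining with Step 3 gives $\frac{1}{r}\sum_i\beta_i^2\ge\alpha^2+\delta^2-O(L^{-1/12})$, which is the claim since $\alpha=|A\cap I|/|I|$ and $\beta_i=|A\cap P_i|/|P_i|$.

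The substance is entirely in Steps 1--2 --- the principle that a Fourier coefficient at a frequency $\theta$ that is ``flat'' on the chosen subprogressions can be large only if there is genuine density fluctuation \emph{between} those subprogressions --- while Steps 3--4 are routine Cauchy--Schwarz and variance bookkeeping. The main obstacle is thus not conceptual but a matter of careful error accounting: one must track the two error sources, the leftover set $P_0$ and the phase flattening on each $P_i$, and check against the hypotheses $\Vert\theta d\Vert\le L^{-1/3}$, $|P_0|\ll L^{2/3}$ and the stated bounds on $d,L'$ that, after normalising by $rL'\asymp L$, both are $O(L^{-1/12})$. A minor secondary point, noted above, is to dispose separately of the degenerate regime where $\delta$ is too small for the squaring in Step 3 to be legitimate, where the lemma holds trivially.
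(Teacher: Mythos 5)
Your argument is correct and follows essentially the same route as the paper's proof: flatten the phase $e(x\theta)$ on each progression $P_i$ using $\Vert\theta d\Vert\le L^{-1/3}$ and $|P_i|=L'\ll L^{1/4}$, absorb the $P_0$ and phase errors into $O(L^{11/12})$, deduce $\frac{1}{r}\sum_i|\beta_i-\alpha|\ge\delta-O(L^{-1/12})$, and finish with the variance decomposition and Cauchy--Schwarz. Your explicit treatment of the degenerate regime where $\delta=O(L^{-1/12})$ is a small point the paper leaves implicit, but otherwise the two proofs coincide.
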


\begin{proof} 
Note first that if $x, y \in P_i$ for some $i \in [r]$, then $|x - y| = d \ell$ for some $\ell \leq L' = L^{1/4}$. Therefore
\begin{equation}\label{eq:thetaxy}
\big| e(\theta x) - e(\theta y) \big| \, \ll \, \Vert \theta  d \Vert \cdot \ell \, \ll \, L^{-1/12}.
\end{equation}
Hence, setting $\alpha := |A \cap I|/|I|$ and $\alpha_i := |A \cap P_i|/|P_i|$ for each $i \in [r]$, we have
\begin{align*}
\delta L & \, \leq \, |\hat{f}_A(\theta)| \, = \, \bigg| \sum_{x \in I} \big( 1_A(x) - \alpha \big) e(\theta x)\bigg| \, \leq \, \sum_{i = 0}^r \Big | \sum_{x \in P_i} \big( 1_A(x) - \alpha \big) e(\theta x) \Big | \\ 
& \, \leq \, \sum_{i = 1}^r \Big | \sum_{x \in P_i} \big( 1_A(x) - \alpha \big) \Big| + O(L^{11/12}) \, = \, L' \sum_{i=1}^r \big| \alpha_i - \alpha \big| + O(L^{11/12}).
\end{align*}
Indeed, the first step is one of our assumptions, the second is the definition~\eqref{def:fhat}, the third is the triangle inequality, the fourth follows by~\eqref{eq:thetaxy} and our bound on $|P_0|$, and the final step since $|A \cap P_i| = \alpha_i |P_i| = \alpha_i L'$.  Noting that $L \geq r L'$, 
it follows immediately that
\begin{equation}\label{eq:alphadiff}
\frac{1}{r}\sum_{i = 1}^r \big| \alpha_i - \alpha \big| \, \geq \, \delta  - O\big( L^{-1/12} \big).
\end{equation}
Recall that we are aiming to give a lower bound on $\frac{1}{r} \sum_{i=1}^r \alpha_i^2$. Therefore, let us set $\overline{\alpha} := \frac{1}{r}\sum_{i=1}^r \alpha_i$ and observe that
\[\frac{1}{r}\sum_{i=1}^r \alpha_i^2 \, = \, \overline{\alpha}^2 + \, \frac{1}{r} \sum_{i = 1}^r \big( \alpha_i - \overline{\alpha} \big)^2.\]
Moreover, since $|\alpha - \overline{\alpha}| = O(L^{-1/3})$, by our bound on $|P_0|$, it follows from~\eqref{eq:alphadiff} that
\[ \frac{1}{r}\sum_{i = 1}^r \big| \alpha_i - \overline{\alpha} \big| \, \geq \, \delta  - O\big( L^{-1/12}  \big).\]
Hence, by the Cauchy-Schwarz inequality, we obtain
\[
\frac{1}{r}\sum_{i = 1}^r \alpha_i^2 \, \geq \, \overline{\alpha}^2 + \Big( \delta - O\big( L^{-1/12} \big) \Big)^2 \, \ge \, \alpha^2 + \delta^2 - O\big(L^{-1/12} \big),
\]
as required.
\end{proof}

We will use the following simple lemmas during the proof below; we gather them here for convenience.

\begin{lemma}\label{tech-lemma-3}
Let $S_1,\dots,S_k \subset [n]$ be sets, each of cardinality at most $s$. Suppose that $S_i$ intersects $S_j$ for at least $\eps k^2$ pairs $(i,j) \in [k]^2$. Then there exists a set $T \subset [k]$ of size $\eps k/s$ such that $\bigcap_{j \in T} S_j$ is non-empty. 
\end{lemma}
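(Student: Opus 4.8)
\textbf{Proof proposal for Lemma~\ref{tech-lemma-3}.}

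The plan is to reformulate the hypothesis as a statement about an auxiliary graph and then apply a standard convexity (double-counting) argument to locate a popular element. Specifically, form the graph $H$ on vertex set $[k]$ with $i \sim j$ precisely when $S_i \cap S_j \ne \emptyset$. The hypothesis says $H$ has at least $\eps k^2$ edges (counting ordered pairs, or at least $\eps k^2/2$ unordered ones, a constant factor that will be harmless). I will count, in two ways, the number of triples $(i,j,x)$ with $x \in S_i \cap S_j$. On the one hand this equals $\sum_{x \in [n]} d(x)^2$, where $d(x) := \#\{ i \in [k] : x \in S_i \}$; on the other hand it is at least the number of intersecting pairs $(i,j)$, namely at least $\eps k^2$.

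Next I would extract a single element of high multiplicity. Since each $S_i$ has size at most $s$, we have $\sum_{x} d(x) = \sum_{i} |S_i| \le sk$, so $d$ is supported on at most $sk$ values (indeed at most $n$, but the bound $sk$ is what matters). By Cauchy--Schwarz, or simply by convexity,
\[
\eps k^2 \, \le \, \sum_{x} d(x)^2 \, \le \, \Big( \max_x d(x) \Big) \sum_x d(x) \, \le \, \Big( \max_x d(x) \Big) \cdot sk,
\]
and therefore there is some $x_0 \in [n]$ with $d(x_0) \ge \eps k / s$. Setting $T := \{ i \in [k] : x_0 \in S_i \}$ gives $|T| \ge \eps k/s$ and $x_0 \in \bigcap_{j \in T} S_j$, so the intersection is non-empty, as required. (If one wants $|T|$ exactly equal to $\eps k/s$ rather than at least that, simply pass to a subset of the appropriate size; I will phrase the conclusion as ``size at least $\eps k/s$'', which is what the applications need.)

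I do not anticipate a genuine obstacle here: the only mild point of care is bookkeeping with ordered versus unordered pairs and with the diagonal terms $i = j$ (which contribute $\sum_x d(x) \le sk$ to $\sum_x d(x)^2$ and can be absorbed, or sidestepped by noting the hypothesis already concerns pairs in $[k]^2$). The heart of the argument is just the one-line double count plus the Cauchy--Schwarz step above, so the proof should be three or four lines once written out carefully.
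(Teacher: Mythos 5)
Your proof is correct and is essentially the paper's argument: both are short pigeonhole/double-counting proofs that produce a single element of multiplicity at least $\eps k/s$ and take $T$ to be the indices of the sets containing it. The only (immaterial) difference is that the paper runs the pigeonhole in two stages — first finding one $S_i$ that meets at least $\eps k$ of the $S_j$, then pigeonholing the chosen intersection points over the at most $s$ elements of that $S_i$ — whereas you do a single global count of $\sum_x d(x)^2$; the two are interchangeable and give the same bound.
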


\begin{proof}
This is just an easy application of the pigeonhole principle. To spell it out, note that for some $i \in [k]$ we have $S_i \cap S_j \neq \emptyset$ for at least $\eps k$ values of $j \in [k]$. Since each point of intersection lies in $S_i$, the same point must be chosen at least $\eps k / s$ times.
\end{proof}

\begin{lemma}\label{tech-lemma-2}
Let $I \subset \Z$ be an interval of length $L$, suppose that $A \subset I$, and set $M = \lceil 100 L/\eps\rceil$. Then there are at most $O(\eps^{-3})$ $M$th roots of unity $\theta$ such that  $|\hat{f}_A(\theta)| \geq \eps L/2$.
\end{lemma}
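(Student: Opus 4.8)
\textbf{Proof proposal for Lemma~\ref{tech-lemma-2}.}
The plan is a standard Parseval/energy argument. First I would recall that the balanced Fourier transform satisfies a Parseval-type identity on the group of $M$th roots of unity: embedding $I$ (an interval of length $L$) inside $\Z/M\Z$, where $M = \lceil 100 L/\eps \rceil \gg L$, we have
\[
\frac{1}{M}\sum_{\theta^M = 1} \big| \hat{f}_A(\theta) \big|^2 \, = \, \sum_{x \in I} \big( 1_A(x) - \alpha \big)^2 \, \le \, L,
\]
since each term $(1_A(x)-\alpha)^2$ is at most $1$ and there are $L$ of them (more precisely the sum equals $\alpha(1-\alpha)L \le L$, where $\alpha = |A|/|I|$). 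Here I am using that the characters $x \mapsto e(x\theta)$, as $\theta$ ranges over $M$th roots of unity, are orthogonal on $\Z/M\Z$, and that the function $x \mapsto 1_A(x) - \alpha$ is supported on the interval $I$ which injects into $\Z/M\Z$ because $M > L$.

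Next, suppose $\theta_1,\dots,\theta_t$ are distinct $M$th roots of unity with $|\hat{f}_A(\theta_s)| \ge \eps L/2$ for each $s$. Then each such $\theta_s$ contributes at least $(\eps L/2)^2 = \eps^2 L^2/4$ to the sum on the left-hand side above, so
\[
t \cdot \frac{\eps^2 L^2}{4} \, \le \, \sum_{\theta^M = 1} \big| \hat{f}_A(\theta) \big|^2 \, \le \, M L \, = \, \lceil 100 L/\eps \rceil \cdot L \, \le \, \frac{200 L^2}{\eps},
\]
(the last step using $L \ge 1$, say, so that $\lceil 100L/\eps\rceil \le 200 L/\eps$). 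Rearranging gives $t \le 800/\eps^3 = O(\eps^{-3})$, which is the claimed bound.

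I do not anticipate a genuine obstacle here; the only point requiring a little care is making the Parseval step honest, i.e. choosing the ambient cyclic group ($\Z/M\Z$, with $M > L$) so that $I$ embeds without wraparound and the character orthogonality applies cleanly, and being slightly careful that $\hat f_A$ in the statement is a function on $\R/\Z$ rather than on $M$th roots of unity — but restricting it to $M$th roots of unity is exactly what is needed, and the values there are the ordinary discrete Fourier coefficients of $1_A - \alpha$ on $\Z/M\Z$. One should also double-check the constant: the bound $\sum_x (1_A(x)-\alpha)^2 = \alpha(1-\alpha)L \le L/4$ can be used to sharpen it, but any absolute constant suffices for the stated $O(\eps^{-3})$.
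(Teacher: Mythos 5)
Your proof is correct and follows essentially the same route as the paper: Parseval's identity on $\Z/M\Z$ (noting that $I$ embeds without wraparound since $M > L$) gives $\sum_{\theta^M=1}|\hat{f}_A(\theta)|^2 \le LM$, and dividing by $(\eps L/2)^2$ yields the $O(\eps^{-3})$ bound. The only differences are cosmetic (normalization of Parseval and the sharper bound $\alpha(1-\alpha)L \le L/4$, which the paper does not bother with).
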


\begin{proof}  
Without loss of generality, we may assume that $I = [L]$. We apply Parseval's identity in $\Z/M\Z$, which is easily established directly and states that 
\[ \sum_{r \in \Z/M\Z} \Big| \frac{1}{M} \sum_{x \in \Z/M\Z} \psi(x) e(-rx/M) \Big|^2 \, = \, \frac{1}{M} \sum_{x \in \Z/M\Z} |\psi(x)|^2
\] 
for any function $\psi \colon \Z/M\Z \rightarrow \C$. Set $\psi(x) = f_A(x) = 1_A(x) - \alpha$ if $x \in \{1,\ldots,L\}$ and $\psi(x) = 0$ otherwise. By definition, we have
\[ 
 \hat{f}_A(r / M \big) = \sum_{x \in \Z/M\Z} \psi(x) e( - rx / M),
\] 
and hence, by Parseval's identity, we obtain
\[ 
\sum_{r \in \Z/M\Z} \big| \hat{f}_A(r / M) \big|^2 \leq LM.
\] 
It follows immediately that the maximal number of $M$th roots of unity $\theta = r/M$ with $|\hat{f}_A(r/M)| \geq \eps L/2$ is at most $4M/\eps^2 L = O(\eps^{-3})$, as required.
\end{proof}

\begin{lemma}\label{tech-lemma-1}
Let $I \subset \Z$ be an interval of length $L$, let $\theta \in \R/\Z$, and suppose that $A \subset I$ is a set with $|\hat{f}_A(\theta)| \geq \eps L$. Set $M = \lceil 100 L/\eps\rceil$, and let $\tilde\theta$ be the $M$th root of unity nearest to $\theta$. Then $|\hat{f}_A(\tilde\theta)| \geq \eps L/2$.
\end{lemma}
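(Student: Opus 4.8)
The plan is to exploit the fact that $\hat{f}_A$ is a sum of only $L$ terms, each bounded in modulus by $1$, and therefore cannot change much when $\theta$ is perturbed by an amount as small as $O(1/M) = O(\eps/L)$. The only mild subtlety is that the naive estimate $|e(x\theta) - e(x\tilde\theta)| \ll |x| \|\theta - \tilde\theta\|$ involves $|x|$, which could a priori be large, so I would first normalise the interval.

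First I would reduce to the case $I = \{0,1,\dots,L-1\}$. Indeed, replacing $(A,I)$ by the translate $(A+c, I+c)$ changes $\hat{f}_A(\theta)$ only by the unimodular factor $e(c\theta)$ (the density $\alpha$ being unchanged), and $\tilde\theta$ depends only on $\theta$ and $M$, not on $A$ or $I$; hence $|\hat{f}_A(\theta)|$ and $|\hat{f}_A(\tilde\theta)|$ are both translation-invariant, and we may assume $|x| < L$ for every $x \in I$.

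Next I would bound the difference directly. Using $|1_A(x) - \alpha| \le 1$, the triangle inequality, and the elementary estimate $|e(s) - e(t)| \le 2\pi|s-t|$ for real $s,t$ (applied with any real lift of $\tilde\theta$ with $|\theta - \tilde\theta| \le \frac{1}{2M}$, which exists since the $M$th roots of unity are $\frac1M$-separated in $\R/\Z$),
\[
\big| \hat{f}_A(\theta) - \hat{f}_A(\tilde\theta) \big| \, \le \, \sum_{x \in I} \big| e(x\theta) - e(x\tilde\theta) \big| \, \le \, \sum_{x \in I} 2\pi |x| \, |\theta - \tilde\theta| \, \le \, L \cdot 2\pi L \cdot \frac{1}{2M} \, = \, \frac{\pi L^2}{M}.
\]
Since $M = \lceil 100L/\eps \rceil \ge 100L/\eps$, the right-hand side is at most $\pi\eps L/100 < \eps L/2$. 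Combining this with the hypothesis $|\hat{f}_A(\theta)| \ge \eps L$ and the triangle inequality gives $|\hat{f}_A(\tilde\theta)| \ge \eps L - \eps L/2 = \eps L/2$, as required.

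There is no real obstacle here: the argument is a one-line continuity estimate, and the generous constant $100$ in the definition of $M$ leaves plenty of room (any constant exceeding $2\pi$ would do). The only point requiring a moment's care is the translation normalisation ensuring $|x| \le L$ on $I$, which is immediate from the phase-covariance of $\hat{f}_A$ noted above.
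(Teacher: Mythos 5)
Your argument is correct and is essentially the paper's proof: the paper simply invokes the mean value theorem together with the derivative bound $|\hat{f}_A'(\theta)| \le 2\pi L^2$, which is exactly your term-by-term estimate $|e(x\theta)-e(x\tilde\theta)| \le 2\pi|x|\,|\theta-\tilde\theta|$ summed over $I$. The translation normalisation you spell out is a harmless (and correct) bit of extra care that the paper leaves implicit.
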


\begin{proof} 
This follows from the mean value theorem and the fact that the derivative of $\hat{f}_A$ is bounded by $2\pi L^2$, as can be verified by term-by-term differentiation.
\end{proof}

We are now ready to prove the regularity lemma. We will define an increasing sequence of primes $q_t$  with $t = 1,2,3,\dots$ and $q_1 \sim \frac{1}{\eps^{10}}$, and a corresponding sequence of dilates $\lambda_t A$. Define the $t$th energy, $E_t$, to be
\[ 
E_t := \frac{1}{q_t}\sum_{i = 1}^{q_t} \bigg( \frac{|\lambda_t A \cap I_i(q_t)|}{|I_i(q_t)|} \bigg)^2,
\] 
and note that trivially $E_t \leq 1$. Set $L_t := p/q_t$, and note that the discrete intervals $I_i(q_t)$ have length $L_t + O(1)$. To simplify notation, let us write $A^*_i(t) = \lambda_t A \cap I_i(q_t)$ for each $i \in [q_t]$. 

Now, fix a value of $t \in \N$, and suppose that there are more than $\eps q_t^2$ pairs $(i,j)$ such that the pair $\big( A^*_i(t), A^*_j(t) \big)$ is not $\eps$-regular (the definition of this concept is given in Definition~\ref{reg-def}). For each such pair $(i,j)$ one of the following three possibilities holds\footnote{We are ignoring the very slight issue that the lengths of $I_i(q_t)$, $I_j(q_t)$ are not \emph{exactly} $L_t$, but rather $L_t + O(1)$. This is an exceptionally minor issue.}:
\begin{enumerate}
\item[(a)] $|\hat{f}_{A^*_i(t)}(\theta_{i,j})| \geq \eps L_t$ and $|\hat{f}_{A^*_j(t)}(\theta_{i,j})| \geq \eps L_t$ for some $\theta_{i,j} \in \R/\Z$;
\item[(b)] $|\hat{f}_{A^*_i(t)}(\theta_i)| \geq \eps L_t$ for some $\theta_{i} \in \R/\Z$ with $\Vert \theta_i \Vert \leq 1/\eps L_t$;
\item[(c)] $|\hat{f}_{A^*_j(t)}(\theta_j)| \geq \eps L_t$ for some $\theta_{j} \in \R/\Z$ with $\Vert \theta_j \Vert \leq 1/\eps L_t$.
\end{enumerate}
By the pigeonhole principle, one of (a), (b) and (c) holds for at least $\eps q_t^2/3$ pairs $(i,j)$. Note that, by Lemma~\ref{tech-lemma-1}, and at the cost of replacing $\eps$ by $\eps/2$, we may assume that $\theta_i$, $\theta_j$ and $\theta_{i,j}$ are all $M$th roots of unity, where $M = \lceil 100L_t/\eps\rceil$. Recall also that, by Lemma~\ref{tech-lemma-2}, if we write $\Sigma_i$ for the set of all $M$th roots of unity $\theta = r / M$ for which $\hat{f}_{A^*_i(t)}(\theta) \geq \eps L_t/2$, then we have $|\Sigma_i| = O(\eps^{-3})$.  

We claim that there exists $\theta \in \R/\Z$ and a set $\Omega \subset [q_t]$, with $|\Omega| \gg \eps^4 q_t$, such that 
\begin{equation}\label{large-fourier} 
|\hat{f}_{A^*_i(t)}(\theta)| \geq \eps L_t/2
\end{equation} 
for every $i \in \Omega$. To prove this, suppose first that (a) holds for at least $\eps q_t^2/3$ pairs $(i,j)$, and note that $\theta_{i,j} \in \Sigma_i \cap \Sigma_j$ for each such pair. By Lemma~\ref{tech-lemma-3}, it follows that there is some $\theta$ lying in $\gg \eps^4 q_t$ of the sets $\Sigma_i$, i.e., there is a set $\Omega \subset [q_t]$, with $|\Omega| \gg \eps^4 q_t$, such that~\eqref{large-fourier} holds for every $i \in \Omega$, as claimed. On the other hand, if (b) holds (say) for at least $\eps q_t^2/3$ pairs $(i,j)$, then (by the pigeonhole principle) it follows that (b) holds for at least $\eps q_t /3$ different values of $i \in [q_t]$. Since there are clearly $ \ll M / \eps L_t = O(\eps^{-2})$ different $M$th roots of unity $\theta$ for which $\Vert \theta \Vert \leq 1/\eps L_t$, another application of the pigeonhole principle gives us a $\theta \in \R / \Z$ and a set $\Omega \subset [q_t]$ of size $\ll \eps^3 q_t$ such that~\eqref{large-fourier} holds for every $i \in \Omega$, as required. Since (b) and (c) are equivalent, this proves the claim. 

Our next aim is to construct a partition of each interval $I_i(q_t)$ into arithmetic progressions, as in Lemma~\ref{tech-lemma-4}. Recall that these must all have common difference $d \leq L_t^{1/3}$ satisfying $\Vert \theta d \Vert \leq L_t^{-1/3}$ for some $\theta$ with $|\hat{f}_{A^*_i(t)}(\theta)| \geq \delta L_t$. By the claim, there exists a $\theta \in \R / \Z$ for which the last inequality holds (with $\delta = \eps / 2$) for every $i \in \Omega$. Moreover, by Dirichlet's lemma on diophantine approximation\footnote{For any $\alpha \in \R$ and any $Q \in \N$, there exists $q \in \N$ such that $q < Q$ and $\Vert q \alpha \Vert \leq 1 / Q$.} there exists $1 \leq d \leq L_t^{1/3}$ such that $\Vert \theta d \Vert \leq L_t^{-1/3}$. 

Set $\lambda_{t+1} := d^{-1} \lambda_t$, take $q_{t+1}$ to be a prime with $p^{3/4} q_t^{1/4} \ll q_{t+1} \ll p^{3/4} q_t^{1/4}$, and note that (since $L_{t+1} = p / q_{t+1}$) we have $L_t^{1/4} \ll L_{t+1} \ll L_t^{1/4}$ (as required by Lemma~\ref{tech-lemma-4}). Consider the arithmetic progressions $P_j = d I_j(q_{t+1})$ for $j \in [q_{t+1}]$, and note that these form a decomposition of $\Z / p\Z$. Note that at most $d q_t$ of these progressions are not contained in some interval $I_i(q_t)$ (since each must contain an element within distance $d/2$ of the endpoint of some interval). Call such progressions bad, and the remaining progressions good.

Set $R_i = \{ j : d I_j(q_{t+1}) \subset I_i(q_t) \}$ and $r_i = |R_i|$. We claim that, by Lemma~\ref{tech-lemma-4}, we have
\[ 
\frac{1}{r_i} \sum_{j \in R_i} \bigg( \frac{|\lambda_t A \cap P_j|}{L_{t+1}} \bigg)^2 \geq  \bigg( \frac{|A^*_i(t)|}{L_t} \bigg)^2 + \frac{\eps^2}{4} 1_{i \in \Omega} - O\big( L_t^{-1/12} \big).
\]
Indeed, if $i \in \Omega$ then this follows by the lemma, noting that $P_0 = I_i(q_t) \setminus \bigcup_j P_j$ contains at most $2d L_{t+1} \ll L_t^{2/3}$ elements. On the other hand, if $i \not\in \Omega$ then it follows by the Cauchy-Schwarz inequality.

Noting that $|\lambda_t A \cap P_j| = |\lambda_t A \cap d I_j(q_{t+1})| = |\lambda_{t+1} A \cap I_j(q_{t+1})| = |A^*_j(t+1)|$, and that
\[ 
r_i = L_t/L_{t+1} + O(d) = q_{t+1}/q_t +O(d),
\] 
this implies that
\[ 
\frac{1}{q_{t+1}} \sum_{j \in R_i} \bigg( \frac{|A^*_j(t+1)|}{L_{t+1}} \bigg)^2  \geq  \frac{1}{q_t} \bigg( \frac{|A^*_i(t)|}{L_t} \bigg)^2 + \frac{\eps^2}{4 q_t} 1_{i \in \Omega} - O\bigg( \frac{d}{q_{t+1}} + \frac{1}{q_t L_t^{1/12}} \bigg).
\]
Summing over $i \in [q_{t+1}]$, and recalling that $|\Omega | \gg \eps^4 q_t$, we obtain
\begin{align*}
E_{t+1} & \, = \, \frac{1}{q_{t+1}} \sum_{j = 1}^{q_{t+1}} \bigg( \frac{|A^*_j(t+1)|}{L_{t+1}} \bigg)^2 \, \ge \, \frac{1}{q_{t+1}} \sum_{i = 1}^{q_t} \sum_{j \in R_i} \bigg( \frac{|A^*_j(t+1)|}{L_{t+1}} \bigg)^2  \\
& \, \geq \, \frac{1}{q_t} \sum_{i = 1}^{q_t} \bigg( \frac{|A^*_i(t)|}{L_t} \bigg)^2 + \eps^6 c - O\bigg( \frac{d q_t }{q_{t+1}} + \frac{1}{L_t^{1/12}} \bigg) \, \ge \, E_t + \frac{\eps^6 c}{2}
\end{align*}
for some absolute $c > 0$, provided that $L_t \gg \eps^{-72}$, since $d q_t \le L_t^{1/3} q_t \ll L_t^{-1/12} q_{t+1}$.


Since the energy $E_t$ is always bounded by $1$, the iteration stops at time $t \ll \eps^{-6}$, provided that $L_t \gg \eps^{-72}$ at that point. Since $L_t \sim (\eps^{10} p)^{(1/3)^t}$, this will be so if $\eps \gg (\log \log p)^{-1/6}$. This completes the proof of the regularity lemma.

\vspace{11pt}

\textsc{Proof of the counting lemma.} Let us begin by recalling the statement.

\begin{Count}
Let $\eps, L$ be positive parameters with $L > 16/\eps$. Suppose that $I, I' \subset \Z/p\Z$ are intervals with $|I|, |I'| = L+ O(1)$. Suppose also that the pair of sets $A \subset I$ and $A' \subset I'$ is $\eps^7$-regular and that $|A|, |A'|\geq \eps L$. Then $|A + A'| \geq (2 - 8\eps) L$.
\end{Count}

\begin{proof}
We will ignore, for notational simplicity, the fact that $I, I'$ do not have length \emph{exactly} $L$. This makes no material difference. Note also that we may assume that $\eps < 1/2$, since otherwise the claim holds trivially. Suppose the result is false. Then there is a set $S \subset I + I'$, $|S| = 8 \eps L$, such that $(A + A') \cap S = \emptyset$. This we may write as
\[ \sum_{x \in I} \sum_{x' \in I'} 1_A(x) 1_{A'}(x') 1_S(x + x') = 0.\] 
Write $\alpha := |A|/|I|$ and $\alpha' := |A'|/|I'|$. By assumption, $\alpha, \alpha' \geq \eps$. Writing $1_A = \alpha 1_I + f_A$ and $1_{A'} = \alpha' 1_{I'} + f_{A'}$, we may expand as a sum of four terms. There is a ``main term'' 
\[ M := \alpha\alpha' \sum_{x \in I} \sum_{x' \in I'}  1_S(x + x') \] and three further ``error'' terms $E_1, E_2$ and $E_3$. 
We begin by giving a lower bound for the main term $M$. There are only two points of $I + I'$ with a unique representation as $x + x'$ (the two endpoints), two points with just $2$ representations, and so on. Therefore we have
\[ \sum_{x \in I} \sum_{x' \in I'} 1_S(x + x')  \geq 1 + 1 + 2 + 2 + 3 + \dots + \lceil \textstyle\frac{1}{2}|S| \rceil .\] 
This is at least $\frac{1}{2} + 1 + \frac{3}{2} + 2 + \frac{5}{2} + \dots + \frac{1}{2}|S|$. Note that, since $L > 16/\eps$, we have $|S| > 2$. Therefore this is at least $\frac{1}{8}|S|^2\geq 5 \eps^2 L^2$, and hence $M \geq 5 \eps^4 L^2$.

We turn now to the error terms $E_1, E_2, E_3$. We will show that $E_1 + E_2 + E_3 > - 4 \eps^4 L^2$, which combined with the observations above gives a contradiction. We begin by expressing the error terms using the Fourier transform. One easily checks\footnote{Here $\hat{1}_S(\theta) = \sum_{x \in S} e(x \theta)$ and $\hat{f}_A(\theta)$ is as defined in Section~\ref{sec2-regularity}. Since $\int_0^1 e(x\theta) \dt = 1_{\{x = 0\}}$, the three identities all follow easily from the definitions.} that:
\[ E_1 = \alpha \int^1_0  \hat{1}_I(\theta) \hat{f}_{A'}(\theta) \hat{1}_S(-\theta) \dt,\]
\[ E_2 = \alpha' \int^1_0  \hat{1}_{I'}(\theta) \hat{f}_A(\theta) \hat{1}_S(-\theta) \dt\] and 
\[ E_3 = \int^1_0 \hat{f}_A(\theta) \hat{f}_{A'}(\theta) \hat{1}_S(-\theta) \dt.\]
Let us bound these in turn. We claim first that $|E_1| \leq 8 \eps^{15/2} L^2 < \eps^4 L^2$ (recall that $\eps < 1/2$); the same bound holds for $|E_2|$ analogously. Let $U = \{ \theta \in [0,1] : \Vert \theta \Vert \leq 1/\eps^7 L \}$; we shall split into two parts, depending on whether or not $\theta \in U$. Since the pair $(A, A')$ is $\eps^7$-regular, it follows that $|\hat{f}_{A'}(\theta)| \leq \eps^7 L$ on $U$, and therefore
\begin{align*}
& \bigg| \int_{\theta \in U} \hat{1}_I(\theta) \hat{f}_{A'}(\theta) \hat{1}_S(-\theta) \dt \bigg| \, \leq \, \eps^7 L \int^1_0 |\hat{1}_I(\theta)||\hat{1}_S(-\theta)| \dt \\ 
& \hspace{2.5cm} \, \leq \, \eps^7 L \bigg( \int^1_0 |\hat{1}_I(\theta)|^2 \dt \bigg)^{1/2} \bigg( \int^1_0 |\hat{1}_S(\theta)|^2 \dt \bigg)^{1/2} \leq \, 3\eps^{15/2} L^2.
\end{align*}
The last step follows since, by Parseval's identity, 
\[ 
\int^1_0 |\hat{1}_S(\theta)|^2\, \dt = \sum_{x \in \Z} 1_S(x)^2  = 6\eps L \qquad \mbox{and} \qquad \int^1_0 |\hat{1}_I(\theta)|^2 \, \dt = L.
\]
On the range $\Vert \theta \Vert > 1/\eps^7L$ we use the well-known bound
\[ 
|\hat{1}_I(\theta)| \leq \frac{2}{\Vert \theta \Vert} \leq 2\eps^7 L,
\] 
which follows by explicitly computing $\hat{1}_I(\theta)$ by summing a geometric series. As a result of this, we obtain
\begin{align*}
& \bigg| \int_{\theta \not\in U} \hat{1}_I(\theta) \hat{f}_{A'}(\theta) \hat{1}_S(-\theta) \dt \bigg| \, \leq \, 2\eps^7 L \int^1_0 |\hat{f}_{A'}(\theta)||\hat{1}_S(-\theta)| \dt \\ 
& \hspace{2.5cm} \, \leq \,  2\eps^7 L \bigg( \int^1_0 |\hat{f}_{A'}(\theta)|^2 \dt \bigg)^{1/2} \bigg( \int^1_0 |\hat{1}_S(\theta)|^2 \dt \bigg)^{1/2}  \leq \, 5 \eps^{15/2} L^2.
\end{align*}
The final inequality again follows by Parseval's identity, since 
\[ 
\int^1_0 |\hat{f}_{A'}(\theta)|^2\, \dt \, = \, \sum_{x \in \Z} f_{A'}(x)^2 \, \le \, L.
\]
Putting these two bounds together, we obtain $|E_1| \leq 8 \eps^{15/2} L^2$, as claimed. 

Turning now to $E_3$, recall that since the pair $(A, A')$ is $\eps^7$-regular, we have $|\hat{f}_A(\theta)| |\hat{f}_{A'}(\theta)| \le \eps^7 L^2$ for every $\theta \in \R/\Z$. It follows that
\begin{align*}
|E_3| & \, \leq \, \sup_{\theta} \Big( |\hat{f}_A(\theta)|^{1/2}|\hat{f}_{A'}(\theta)|^{1/2} \Big) \int^1_0 |\hat{f}_A(\theta)|^{1/2}|\hat{f}_{A'}(\theta)|^{1/2} |\hat{1}_S(-\theta)| \dt \\ 
& \, \leq \, \eps^{7/2} L \bigg( \int^1_0 |\hat{f}_A(\theta)|^2 \dt \bigg)^{1/4} \bigg( \int^1_0 |\hat{f}_{A'}(\theta)|^2 \dt\bigg)^{1/4} \bigg( \int^1_0 |\hat{1}_S(\theta) |^2 \dt \bigg)^{1/2} \, \leq \, 3\eps^4 L^2,
\end{align*}
where the final inequality again follows by Parseval's identity. 

Putting everything together, we obtain $E_1 + E_2 + E_3 > - 5 \eps^4 L^2$ and $M \geq 5 \eps^4 L^2$, and hence 
\[ M + E_1 + E_2 + E_3 > 0,\] 
which contradicts our choice of $S$. This completes the proof of the counting lemma.
\end{proof}


\begin{thebibliography}{99}

\bibitem{ABMS} N.~Alon, J.~Balogh, R.~Morris and W.~Samotij, \emph{A refinement of the Cameron-Erd\H{o}s Conjecture}, to appear in Proc. London Math. Soc.

\bibitem{blr} Y.F.~Bilu, V.F.~Lev and I.Z.~Ruzsa, \emph{Rectification principles in additive number theory}, Dedicated to the memory of Paul Erd\H{o}s,  
Discrete Comput. Geom. \textbf{19} (1998), 343--353.

\bibitem{BL} B.~Bollob\'as and I.~Leader, \emph{Compressions and isoperimetric inequalities}, J. Combin. Theory Ser. A \textbf{56} (1991), 47--62.

\bibitem{bourgain-multilinear} J.~Bourgain, \emph{Multilinear exponential sums in prime fields under optimal entropy condition on the sources}, Geom. Funct. Anal. \textbf{18} (2009), 1477--1502. 

\bibitem{Chang} M.-C. Chang, \emph{A polynomial bound in Freiman's theorem}, Duke Math. J. \textbf{113} (2002), 399--419.


\bibitem{Frei59} G.~A.~Freiman, \emph{The addition of finite sets. I} (Russian), Izv. Vys\v{s}. U\v{c}ebn. Zaved. Matematika \textbf{13} (1959), 202--213.

\bibitem{garaev-quant} M.~Garaev, \emph{A quantified version of Bourgain's sum-product esimate in $\F_p$ for subsets of incomparable sizes}, Electronic J. Combinatorics \textbf{15} (2008). 

\bibitem{graham-ringrose} S.W.~Graham and C.J.~Ringrose, \emph{Lower bounds for least quadratic non-residues}, in Analytic Number Theory (Allerton Park 1989), Progress in Mathematics \textbf{85},  pp269--309, Birkh\"auser (Basel) 1990.

\bibitem{GEdin} B.~J.~Green, \emph{Edinburgh lecture notes on Freiman's theorem}, preprint. \\ available at \texttt{https://www.dpmms.cam.ac.uk/$\sim$bjg23/papers/convexnotes.pdf}.

\bibitem{G05} B.~J.~Green, \emph{Counting sets with small sumset, and the clique number of random Cayley graphs}, Combinatorica \textbf{25} (2005), 307--326.

\bibitem{green-arithreg} B.~J.~Green, \emph{A Szemer\'edi-type regularity lemma in abelian groups}, Geom. Funct. Anal. \textbf{15} (2005), 340--376.

\bibitem{GRprime} B.~J.~Green and I.~Z.~Ruzsa, \emph{Counting sumsets and sum-free sets modulo a prime}, Studia Sci. Math. Hungarica \textbf{41} (2004), 285--293.

\bibitem{GRrect} B.~J.~Green and I.~Z.~Ruzsa, \emph{Sets with small sumset and rectification}, Bull. London Math. Soc. \textbf{38} (2006), 43--52.
  
\bibitem{gt-arithreg} B.~J.~Green and T.~C.~Tao, \emph{An arithmetic regularity lemma, associated counting lemma, and applications}, in \emph{An irregular mind - Szemer\'edi is 70}, Bolyai Soc. Math. Stud. \textbf{21}, 261--334, J\'anos Bolyai Math. Soc., Budapest, 2010.
 
\bibitem{KSVW} M.~Krivelevich, B.~Sudakov, V.~H.~Vu and N.~C.~Wormald, \emph{Random regular graphs of high degree}, Random Structures Algorithms \textbf{18} (2001), 346--363.

\bibitem{Pollard} J.~M.~Pollard, \emph{A generalization of the theorem of Cauchy and Davenport}, J. London Math. Soc. \textbf{2} (1974), 460--462.

\bibitem{ruzsa-model} I.~Z.~Ruzsa, \emph{Arithmetical progressions and the number of sums}, Period. Math. Hungar. \textbf{25} (1992), 105--111. 

\bibitem{sanders-survey} T.~Sanders, \emph{The structure theory of set addition revisited}, Bull. Amer. Math. Soc. \textbf{50} (2013), 93--127.

\bibitem{Schoen} T.~Schoen, \emph{The cardinality of restricted sumsets}, J. Number Theory \textbf{96} (2002), 48--54.

\bibitem{TV} T.~C.~Tao and V.~H.~Vu, \emph{Additive combinatorics}, Cambridge University Press, 2006.

\bibitem{WW} D.-L~Wang and P.~Wang, \emph{Discrete isoperimetric problems}, SIAM J. Appl. Math. \textbf{32} (1977), 860--870.

\end{thebibliography}
\end{document}